\newcommand{\ms}{\text{ms}}
\newtheorem{theorem}{Theorem}[section]
\newtheorem{example}{Example}[section]
\newtheorem{lemma}{Lemma}[section]
\newtheorem{remark}{Remark}
\newcommand\norm[1]{\Vert#1\Vert}
\newcommand\abs[1]{\lvert#1\rvert}
\newcommand{\tand}{\quad\text{and}\quad}
\newcommand\dt{\,\text{d}t}
\newcommand\dx{\,\text{d}x}
\newcommand{\Nv}{N_\mathrm{v}}
\newcommand{\Ne}{N_\mathrm{e}}
\definecolor{black}{rgb}{0,0,0}
\definecolor{red}{rgb}{1,0,0}
\definecolor{blue}{rgb}{0,0,1}
\begin{document}
\title{\large{\bfseries{\scshape{Partially Explicit Generalized Multiscale Method for Poroelasticity Problem}}}}

\author{Xin Su \thanks{Department of Mathematics, University of California, Santa Barbara, CA 93106, United States} , Wing Tat Leung \thanks{Department of Mathematics, City University of Hong Kong, Hong Kong, China.} , Wenyuan Li \thanks{Department of Mathematics, Texas A\&M University, College Station, TX 77843, United States}  ~ and ~ Sai-Mang Pun\thanks{Qube Research \& Technologies, Hong Kong SAR}}

\maketitle

\begin{abstract}
We develop a partially explicit time discretization based on the framework of constraint energy minimizing generalized multiscale finite element method (CEM-GMsFEM) for the problem of linear poroelasticity with high contrast. Firstly, dominant basis
functions generated by the CEM-GMsFEM approach are used to capture important degrees of
freedom and it is known to give contrast-independent convergence that scales with the mesh size.
In typical situation, one has very few degrees of freedom in dominant basis functions. This part is
treated implicitly. Secondly, we design and introduce an additional space in the complement space
and these degrees are treated explicitly. We also investigate the CFL-type stability restriction for
this problem, and the restriction for the time step is contrast independent.
\end{abstract}
\section{Introduction}
 Poroelasticity is a field of study that deals with the behavior of fluids and solids that interact in a porous medium. It is a multidisciplinary field that combines principles of fluid mechanics, solid mechanics, and mathematics to understand the mechanics of fluid-saturated porous materials such as soil, rock, and biological tissues. Poroelasticity is an important area of research in reservoir geomechanics \cite{zoback2010reservoir, geomechanics}, hydrogeology \cite{wang2000theory,rice1976some,roeloffs1996poroelastic}, biomedical engineering \cite{tully2011cerebral,guo2020multiple,wirth2006axisymmetric}, and many other fields. Biot developed the theory of the elasticity and viscoelasticity of fluid-saturated porous solids \cite{biot1941general, biot1956theory}. The study of poroelasticity involves understanding how the movement and flow of fluids affect the mechanical behavior of the surrounding solid material, and how the deformation of the solid material can in turn affect the movement and flow of fluids.  

There are different numerical treatments on the poroealsticity problem. Ern and Muenier 
applied the implicit Euler method to the poroelasticity model \cite{ern2009posteriori}, which results in an
unconditionally stable first-order method. A weighted $\theta-$scheme was considered in \cite{kolesov2014splitting} and a stability result was obtained for the weight $\theta\geq 0.5$. 
However, both the purely implicit scheme and the $\theta$-scheme require to solve a large coupled linear system. Due to the considerations of computational costs, different approaches were proposed to solve poroelasticity problem efficiently. In \cite{kolesov2014splitting}, an additive splitting scheme was investigated, which employs an additive representation of the primary operator. The scheme is designed to include an operator that is easily invertible, which can help increase the computational efficiency. Altmann and Maier 
\cite{altmann2022decoupling} designed and analyzed a semi-explicit time discretization scheme of first order for poroelasticity with nonlinear permeability provided that the coupling between the elasticity model and the flow equation is relatively weak. They showed that the semi-explicit scheme significantly outperforms the implicit ones, in general. There are some iterative coupling techniques employed in coupling flow and mechanics in porous media, for example, undrained split, the fixed stress split, the drained split, and the fixed strain split iterative methods \cite{mikelic2013convergence, kim2010sequential,altmann2023novel, dana2018convergence}. Iterative coupling refers to a sequential method for solving coupling problems in which either the flow or mechanics is solved first and then the other problem is solved using the latest information from the previous solution. This procedure is repeated at each time step until a solution is obtained that meets an acceptable tolerance level. However, these methods are limited by their assumptions about the model. For instance, the undrained split method assumes constant fluid mass during the structure deformation. 

For poroelastic problem, if the medium is homogeneous, typical finite element techniques can be used to simulate the poroelastic behavior, see for instance \cite{chaabane2018splitting, ern2009posteriori,lewis1998finite,murad1994stability}. 
However, if the material is strongly heterogeneous, displacement and pressure might oscillate on a fine scale. It is prohibitively expensive to use fine scale method due to the level of details incorporated in fine-scale models.  Coarse models are often used to get around the expensive computational cost under such circumstances. These methods include the generalized multiscale finite element method (GMsFEM) \cite{gmsfem_poro1,gmsfem_poro2, efendiev2013generalized}, the variational multiscale methods \cite{hughes1998variational}, the localized orthogonal decomposition technique \cite{lod_poro,maalqvist2017generalized,maalqvist2014localization}, and the constraint energy minimizing GMsFEM (CEM-GMsFEM) \cite{chung2017constraint, fu2019computational}. All these techniques attempt to construct multiscale spaces to obtain accurate coarse-scale solutions, which could reflect spatial fine-scale features.

Constraint energy minimizing GMsFEM (CEM-GMsFEM) is applied to solve the poroelasticity problem and an online adaptive enrichment method \cite{su2022fast} based on CEM-GMsFEM is proposed to improve the solution. However, as more and more basis functions are added to correct the solution through the online adaptive enrichment, the simulation becomes slower since the inverse of the stiffness matrix takes excessive time. In this paper, we adapt the partially explicit splitting idea to improve the performance of CEM-GMsFEM. We split the solution space for pressure into two parts: the coarse-grid part and the correction part. The basis functions for pressure by CEM-GMsFEM serve as the coarse-grid part, which are used to capture the main features of the pressure. A special function space is carefully designed to serve as the correction part. This space is constructed in the complement space (complement to the coarse space) to capture missing pressure information. For displacement in both the coarse-grid part and the correction part, we used the basis functions by CEM-GMsFEM. In other words, the major characteristic of the solution is captured by the CEM-GMsFEM basis and simulated implicitly. The correction part of the solution is simulated explicitly in the complement space. 

This paper is organized as follows. The preliminaries about the poroelasticity problem and the related notations are introduced in Section \ref{sec:prelim}. In Section \ref{sec:scheme}, we describe the partially explicit method and its stability. Section \ref{sec:construction} is devoted to the construction of coarse spaces for pressure and displacement and the construction of additional function space for pressure. We present the numerical results in Section \ref{sec:numerical}. Conclusions are presented in Section \ref{sec:conclusion}.
\section{Preliminaries}\label{sec:prelim}
\subsection{Model problem}
Let $\Omega \subset \mathbb{R}^d$ ($d \in \{ 2, 3 \}$) be a bounded and polyhedral Lipschitz domain and $T > 0$ be a fixed time. We consider the problem of linear poroelasticity: 
Find the pressure 
$p\colon [0,T]\times \Omega\to \mathbb{R}$ and the displacement field $u\colon [0,T]\times \Omega\to \mathbb{R}^d$ such that 
\begin{subequations}
\label{eq:model}
\begin{alignat}{3}
\label{eq:model1}
-\nabla\cdot \sigma(u) +  \nabla (\alpha p)  &= 0\phantom{f}\qquad\text{in } (0,T] \times \Omega, \\
\label{eq:model2}
\partial_t \bigg( \alpha \nabla\cdot  u + \frac{1}{M}  p \bigg)  - \nabla \cdot \bigg( \frac{\kappa}{\nu} \nabla p\bigg) &= f\phantom{0}\qquad\text{in } (0,T] \times \Omega, 
\end{alignat}
\end{subequations}	
with boundary and initial conditions
\begin{subequations}
\begin{alignat}{3}
\label{eq:init1}
u&=0\phantom{p^0}\qquad \text{on }(0,T]\times \partial\Omega,\\
\label{eq:init2}
p&=0\phantom{p^0}\qquad \text{on }(0,T]\times \partial\Omega,\\
p(0, \cdot)&=p^0\phantom{0}\qquad\text{in }\Omega.\label{eq:init3}
\end{alignat}
\end{subequations}
In the interest of parsimony, the present investigation restricts its focus to a homogeneous Dirichlet boundary condition exclusively. However, the possibility of extending this analysis to incorporate other types of boundary conditions is acknowledged, and interested readers are referred to prior studies \cite{henning2014localized, wang2022multiscale, ye2023constraint}.
In this model, the primary sources of heterogeneity are the stress tensor $\sigma$, the permeability $\kappa$, and the Biot-Willis fluid-solid coupling coefficient $\alpha\in[0,1]$. We denote by $M$ the Biot modulus and by $\nu$ the fluid viscosity. Both are assumed to be constant. Moreover, $f$ is a source term representing injection or production processes. Body forces, such as gravity, are neglected. In the case of a linear elastic stress-strain constitutive relation, the stress and strain tensors are expressed as
 \begin{equation*}
 {\sigma(u)} := 2\mu  {\varepsilon(u)} + \lambda (\nabla\cdot  {u}) \,  {\mathcal{I}} \quad \text{and} \quad
 {\varepsilon}( {u}) := \frac{1}{2} \Big( \nabla  {u} + (\nabla  {u})^T \Big),
 \end{equation*}
 where $\mathcal{I}$ is the identity tensor and $\lambda,\, \mu>0$ are the Lam\'e coefficients, 
 which can also be expressed in terms of Young's modulus $E>0$
 and the Poisson ratio $\nu_p\in(-1, 1/2)$,
 \begin{equation*}
 \lambda=\frac{\nu_p}{(1-2\nu_p)(1+\nu_p)}E, \quad
 \mu=\frac{1}{2(1+\nu_p)}E.
 \end{equation*}
In the considered case of heterogeneous media, the coefficients $\mu$, $\lambda$, $\kappa$, and $\alpha$ may be highly oscillatory.
 
 \subsection{Function spaces}
In this subsection, we clarify the notation used throughout the article. 
We write $(\cdot,\cdot)$ to denote the inner product in $L^2(\Omega)$ and $\norm{\cdot}$ for the corresponding norm. 
Let $H^1(\Omega)$ be the classical Sobolev space with norm $\norm{v}_1 := \big( \norm{v}^2 + \norm{\nabla v}^2 \big)^{1/2}$ for all $v \in H^1(\Omega)$ and $H_0^1(\Omega)$ the subspace of functions having a vanishing trace. We denote the corresponding dual space of $H_0^1(\Omega)$ by $H^{-1}(\Omega)$. 
Moreover, we write $L^r(0,T; X)$ for the Bochner space with the norm 
$$ \norm{v}_{L^r(0,T;X)} := \bigg( \int_0^T \norm{v}_X^r \dt \bigg)^{1/r}, \quad 1\leq r < \infty, \quad \norm{v}_{L^\infty(0,T;X)} := \sup_{0 \leq t \leq T} \norm{v}_X,$$
where $(X,\norm{\cdot}_X)$ is a Banach space. Also, we define $H^1(0,T;X) := \{ v \in L^2(0,T;X) : \partial_t v \in L^2(0,T;X) \}$.
To shorten the notation, we define the spaces for the displacement $u$ and the pressure $p$ by
  \begin{equation*}
  V:=[H_0^1(\Omega)]^d,\quad Q:=H^1_0(\Omega).
  \end{equation*}
  
 \subsection{Variational formulation and discretization}
In this subsection, we provide the variational formulation corresponding to the system \eqref{eq:model}. We first multiply the equations \eqref{eq:model1} and \eqref{eq:model2} with test functions from $V$ and $Q$, respectively. Then, applying Green's formula and making use of the boundary conditions \eqref{eq:init1} and \eqref{eq:init2}, we obtain the following variational problem: find $u(\cdot,t)\in V$ and $p(\cdot,t)\in Q$ such that
\begin{subequations}\label{eq:weak}
	\begin{alignat}{3}
   a(u,v) - d(v,p) &= 0, \label{eqn:v1} \\
   d(\partial_t u,q) + c(\partial_t p,q) + b(p,q) &= (f,q) \label{eqn:v2}      
\end{alignat} 
for all $v\in V$, $q\in Q$ and  
\begin{alignat}{3}
p(\cdot,0)=p^0 \in Q. \label{eqn:v3}
\end{alignat}
\end{subequations}
The bilinear forms are defined by 
   \begin{eqnarray*}
   && a(u,v) := \int_{\Omega} \sigma(u) : \varepsilon(v)\dx, \qquad\quad b(p,q) := \int_{\Omega} \frac{\kappa}{\nu}\, \nabla p\cdot \nabla q \dx, \\
   && c(p,q) := \int_{\Omega} \frac{1}{M}\, p\, q\dx, \qquad\qquad\quad\,\, d(u,q) := \int_{\Omega} \alpha\,  (\nabla \cdot u)q\dx.
   \end{eqnarray*}   
 Here, we use : to denote the Frobenius inner product of matrices.
   
Note that \eqref{eqn:v1} and \eqref{eqn:v3} can be used to define a consistent initial value $u^0:= u(\cdot,0) \in V$. 
Using Korn's inequality \cite{BrennerScott, ciarlet1988mathematical}, we get

$$ c_\sigma \norm{v}_1^2 \leq \norm{v}^2_a=: a(v,v) \leq C_\sigma \norm{v}_1^2$$
for all $v \in V$, where $c_\sigma$ depends on $\operatorname{essinf}_{x\in \Omega}\mu(x)$ while  $C_\sigma$ depends on $\operatorname{esssup}_{x\in \Omega} \mu(x)$ and $\operatorname{esssup}_{x\in \Omega} \lambda(x)$. Similarly, there exist two positive constants $c_\kappa$ and $C_\kappa$ such that
$$ c_\kappa \norm{q}_1^2 \leq \norm{q}^2_b =: b(q,q) \leq C_\kappa \norm{q}_1^2$$ 
for all $q \in Q$. Here, $c_\kappa$ depends on $\operatorname{essinf}_{x\in \Omega}\kappa(x)$ and $C_\kappa$ depends on $\operatorname{esssup}_{x\in \Omega} \kappa(x)$. 
The proof of existence and uniqueness of solutions $u$ and $p$ to \eqref{eq:weak} can be found in \cite{showalter2000diffusion}. 

Here are some notations that would be used later in this article.  Let $V^\prime$ (resp., $Q^\prime$) be the dual space of $V$ (resp., $Q$) with the duality product denoted by $\langle \cdot, \cdot\rangle_a$ (resp., $\langle \cdot, \cdot\rangle_b$) and norm $\|\cdot\|_{a^\prime}=\sup_{0\not=v\in V}|\langle \cdot, v\rangle_a|/\|v\|_a$ (resp., $\|\cdot\|_{b^\prime}=\sup_{0\not=q\in Q}|\langle \cdot, q\rangle_b|/\|q\|_b$). 
$d(\cdot, \cdot) $ is a continuous bilinear form with continuity constant $\beta$, i.e., for all $(v,q)\in V\times Q$, $|d(v,q)|\leq \beta \|v\|_a\|q\|_c$, where $\beta =\max_{x\in\Omega}\{\alpha(M/\lambda)^{\frac{1}{2}}\}$ and $\|\cdot\|_c$ is induced by $c(\cdot,\cdot)$. For all $q\in Q$, it holds that $\|q\|_c\leq \gamma \|q\|_b$, where $\gamma=\frac{C_p}{M}$ and $ C_p$ is the  Poincar\'e constant.

We use a temporal discretized approach as a reference solution. For time discretization, let $\tau$ be a uniform time step and define $t_n=n\tau$ for $n =  0,1,\cdots,N$ and $T=N\tau$. The semi-discretization in time by the backward Euler method yields the following semi-discrete problem: find $u^{n+1}\in V$ and $p^{n+1}\in Q$ such that
\begin{subequations}\label{eq:semi}
	\begin{alignat}{3}
	a(u^{n+1},v) - d(v,p^{n+1}) &= 0, \label{eq:semi_a}\\
	d(D_{\tau}u^{n+1},q) + c(D_{\tau}p^{n+1},q) + b(p^{n+1},q) &= ( f^{n+1},q ),      \label{eq:semi_b}
	\end{alignat} 
\end{subequations}
for all $v\in V$, $q\in Q$, and  
\begin{alignat}{3}
p(\cdot,0)=p^0 \in Q. \label{eq:semiinitial}
\end{alignat}
Here,
$D_{\tau}$ denotes the discrete time derivative, i.e., $D_{\tau}u^{n+1}:=(u^{n+1}-u^n)/\tau$ and $f^{n+1}:=f(t_{n+1})$.

To fully discretize the variational problem \eqref{eq:weak}, we introduce a conforming partition $\mathcal{T}^h$ for the computational domain 
$\Omega$ with (local) grid sizes $h_{K}:=\text{diam}(K)$ for $K\in \mathcal{T}^h$ and ${h:=\max_{K\in \mathcal{T}^h}h_K}$. We remark that $\mathcal{T}^h$ is referred to as the \textit{fine grid}. Next, let $V_h$ and $Q_h$ be the standard finite element spaces of first order with respect to the fine grid $\mathcal{T}^h$, i.e.,
$$ V_h := \{ v = (v_i)_{i=1}^d \in V: \text{each} ~ v_i \lvert_K \text{ is a polynomial of degree} \leq 1 \text{ for all } K \in \mathcal{T}^h \}, $$
$$ Q_h := \{ q \in Q: q\lvert_K \text{ is a polynomial of degree} \leq 1 \text{ for all } K \in \mathcal{T}^h \}. $$

The fully discretization of \eqref{eq:weak} read as follows: for $n = 1,\cdots,N$ and given $p_h^0,\ u_h^0$, find $u_h^{n+1}\in V_h$ and $p_h^{n+1}\in Q_h$ such that
\begin{subequations}\label{eq:weak1}
	\begin{alignat}{3}
	a(u_h^{n+1},v) - d(v,p_h^{n+1}) &= 0, \label{eq:weak1_a}\\
	d(D_{\tau}u_h^{n+1},q) + c(D_{\tau}p_h^{n+1},q) + b(p_h^{n+1},q) &= (f^{n+1},q),      \label{eq:weak1_b}
	\end{alignat} 
\end{subequations}
for all $v\in V_h$ and $q\in Q_h$. 
Here, the initial value $p_h^0\in Q_h$ is set to be the $L^2$
projection of $p^0\in Q$. The initial value $u_h^0$ for the displacement can be obtained by solving 
\begin{equation} \label{eq:initial_u}
a(u_h^0,v)=d(v,p_h^0)
\end{equation}
for all $v \in V_h$.

\section{Partially Explicit Temporal Splitting Scheme}\label{sec:scheme}
In order to make the multiscale spaces~$V_{H}$ and~$Q_{H}$ suitable for computations, we need finite-dimensional analogons. To achieve this, we follow the construction from the previous chapter and use the finite element space corresponding to the coarse grid $\mathcal{T}^H$. This then yields the following fully discrete implicit scheme: for $n=1,2,\cdots,N$, find $(u_{H}^{n+1},p_{H}^{n+1}) \in V_{H} \times Q_{H}$ such that
\begin{subequations}
	\begin{alignat}{2}
	a(u_{H}^{n+1},v) - d(v,p_{H}^{n+1}) &= 0, \label{eq:weak2_1}\\
	d(D_{\tau}u_{H}^{n+1},q) + c(D_{\tau}p_{H}^{n+1},q) + b(p_{H}^{n+1},q) &= (f^{n+1},q),   \label{eq:weak2_2}    
	\end{alignat} 
\end{subequations}
for all $(v,q) \in V_{H} \times Q_{H}$ with initial condition $p_{H}^0\in Q_{H}$ defined by
$$b(p^0 - p_{H}^0,q) = 0 $$
for all $q\in Q_{H}$. 

With the aim of improving the accuracy of the numerical solution by the CEM-GMsFEM method and inspired by the partially explicit method for wave problems \cite{chung2022contrast}, we develop a partially explicit method for the poroealsticity problem. We use the $Q_{H,1}$ generated by the CEM-GMsFEM method to capture the major feature of pressure and carefully design the complement subspace $Q_{H,2}$ to capture the missing details of pressure.
We consider that $Q_H$ can be decomposed into two subspaces $Q_{H,1}$ and $Q_{H,2},$ namely,
$$
Q_H=Q_{H,1}+Q_{H,2}.
$$
With decomposition of $Q_H$, we consider the following partially explicit scheme:
\begin{align}
&a(u_{H,1}^{n+1},v)-d(v,p_{H,1}^{n+1})  =0, \label{scheme1}\\
&a(u_{H,2}^{n+1},v)-d(v,p_{H,2}^{n+1})  =0,  \label{scheme2}\\
&d(D_{\tau} u_{H,1}^{n+1} + D_{\tau} u_{H,2}^{n},q_{1})+c(D_{\tau} p_{H,1}^{n+1}+ D_{\tau} p_{H,2}^{n},q_{1})+b(p_{H,1}^{n+1}+p_{H,2}^{n},q_{1})  =(f^{n},q_{1}), \label{scheme3}\\
&d(D_{\tau} u_{H,2}^{n+1}+ D_{\tau} u_{H,1}^{n},q_{2})+c( D_{\tau} p_{H,2}^{n+1}+ D_{\tau} p_{H,1}^{n},q_{2})+b(p_{H,1}^{n+1}+p_{H,2}^{n},q_{2})  =(f^{n},q_{2}). \label{scheme4}
\end{align}
for all $q_1\in Q_{H,1}$, $q_2\in Q_{H,2}$, and $v\in V_H$. Then the multiscale solution are $u_H^n=u_{H,1}^n+u_{H,2}^n$ and $p_H^n=p_{H,1}^n+p_{H,2}^n$.

\begin{remark}
We remark that with the proposed partially explicit scheme, we need to solve $(u_{H,1},p_{H,1})$ and $(u_{H,2},p_{H,2})$ separately,
and thus, compared to CEM-GMsFEM, we need to solve one more elasticity equation in each time
step. 
\end{remark}

We found that the scheme is stable if the  $b$-norm and the $c$-norm satisfies a CFL-type condition for all functions in $Q_{H,2}$.  The stability of the scheme then implicates the convergence of the scheme. It is worthy to note that this CFL-type condition characterize the explicit space $Q_{H,2}$ and we construct such a subspace to satisfy this CFL-type condition in the next section.

\begin{theorem}
The partially explicit scheme \eqref{scheme1}-\eqref{scheme2} is stable if  \begin{equation}\label{stability_condition_2}
\|q_{2}\|_{b}^{2}\leq\cfrac{(1-\gamma_c)}{\tau}\|q_{2}\|_{c}^{2}\end{equation} for all $q_{2}\in Q_{H,2}$. Moreover, we have the following stability estimate
\begin{align*}
&~~\sum_{i=1,2}\Big(\gamma_c\|p_{H,i}^{n+1}-p_{H,i}^{n}\|_{c}^{2}+\gamma_{a}\|u_{H,i}^{n+1}-u_{H,i}^{n}\|_{a}^{2}\Big)+\tau\|p_H^{n+1}\|_{b}^{2}\Big)\\
&\leq \sum_{i=1,2}\Big(\gamma_c\|p_{H,i}^{n}-p_{H,i}^{n-1}\|_{c}^{2}+\gamma_{a}\|u_{H,i}^{n}-u_{H,i}^{n-1}\|_{a}^{2}\Big)+\tau\|p_H^{n}\|_{b}^{2}\Big)+\cfrac{2\tau^2}{(1-\gamma_c)}\norm{M^{1/2}f^n}^2.
\end{align*}
\end{theorem}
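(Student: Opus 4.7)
The plan is to derive a discrete energy identity by testing the pressure equations \eqref{scheme3}-\eqref{scheme4} with suitable increments of the pressure unknowns, while using the displacement equations \eqref{scheme1}-\eqref{scheme2} to recast the $d$-couplings as $a$-norms of the displacement increments. As a preliminary step, I form the backward time-difference of \eqref{scheme1} and \eqref{scheme2}, which yields the auxiliary Galerkin identity
\begin{equation*}
a(D_\tau u_{H,i}^{n+1}, v) = d(v, D_\tau p_{H,i}^{n+1}) \qquad \text{for all } v \in V_H, \ i = 1,2.
\end{equation*}
This identity is the mechanism by which the implicit $d$-terms in \eqref{scheme3}-\eqref{scheme4} are converted into nonnegative $a$-contributions on displacement increments; in particular, choosing $v = u_{H,i}^{n+1} - u_{H,i}^n$ gives $d(D_\tau u_{H,i}^{n+1}, p_{H,i}^{n+1} - p_{H,i}^n) = \tau\|D_\tau u_{H,i}^{n+1}\|_a^2$.

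Next I multiply \eqref{scheme3} by $\tau$ and test with $q_1 = p_{H,1}^{n+1} - p_{H,1}^n$, multiply \eqref{scheme4} by $\tau$ and test with $q_2 = p_{H,2}^{n+1} - p_{H,2}^n$, and add the two resulting identities. The diagonal $d$-terms then collapse to $\|u_{H,i}^{n+1} - u_{H,i}^n\|_a^2$ and the diagonal $c$-terms to $\|p_{H,i}^{n+1} - p_{H,i}^n\|_c^2$. For the common $b$-contribution, I rewrite $p_{H,1}^{n+1} + p_{H,2}^n = p_H^{n+1} - (p_{H,2}^{n+1} - p_{H,2}^n)$ so that the pairing against $p_H^{n+1} - p_H^n$ can be expanded by the polarization identity $2b(x, x-y) = \|x\|_b^2 - \|y\|_b^2 + \|x-y\|_b^2$; this produces the telescoping pair $\tau(\|p_H^{n+1}\|_b^2 - \|p_H^n\|_b^2)$ together with algebraic residues, among which the critical negative one is a $\tau\|p_{H,2}^{n+1} - p_{H,2}^n\|_b^2$ term on the explicit increment. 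The CFL hypothesis \eqref{stability_condition_2} applied to $q_2 = p_{H,2}^{n+1} - p_{H,2}^n \in Q_{H,2}$ is exactly what is needed to absorb this residue into the diagonal $c$-contribution $\|p_{H,2}^{n+1} - p_{H,2}^n\|_c^2$ and still leave a positive coefficient. The right-hand side source is handled by Cauchy-Schwarz against the $c$-norm and a weighted Young inequality with weight $(1-\gamma_c)/(2\tau)$, yielding exactly the stated $2\tau^2/(1-\gamma_c)\|M^{1/2}f^n\|^2$ bound while the absorbable remainder cancels against the positive $c$-norm left after the CFL step.

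The main obstacle will be the cross terms between the implicit subspace $Q_{H,1}$ and the explicit subspace $Q_{H,2}$, namely the lagged $d$-couplings $\tau d(D_\tau u_{H,2}^n, D_\tau p_{H,1}^{n+1})$ and $\tau d(D_\tau u_{H,1}^n, D_\tau p_{H,2}^{n+1})$ and the analogous cross $c$-couplings. Applying the auxiliary identity once more (with $v = D_\tau u_{H,j}^n$) I can recast the cross $d$-terms as symmetric $a$-inner products between displacement increments at successive time levels, e.g.\ $\tau^2 a(D_\tau u_{H,1}^{n+1}, D_\tau u_{H,2}^n) = a(u_{H,1}^{n+1} - u_{H,1}^n, u_{H,2}^n - u_{H,2}^{n-1})$, which Young's inequality splits into contributions at time $n+1$ and time $n$ that match the telescoping template with coefficient $\gamma_a$. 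The cross $c$-terms are treated analogously, producing matched contributions with coefficient $\gamma_c$. Calibrating the Young weights so that the emerging coefficients on the $(n+1)$- and $n$-level increments coincide with the $(\gamma_a, \gamma_c)$ appearing both in the energy and in the CFL hypothesis is the delicate balancing step, but once that is arranged the identities rearrange directly into the claimed monotonicity inequality.
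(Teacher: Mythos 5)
Your plan reproduces the paper's argument almost step for step: time-differencing \eqref{scheme1}--\eqref{scheme2} to convert the $d$-pairings into $a$-inner products of displacement increments, testing \eqref{scheme3}--\eqref{scheme4} with $q_i=p_{H,i}^{n+1}-p_{H,i}^{n}$, polarizing the $b$-term after writing $p_{H,1}^{n+1}+p_{H,2}^{n}=p_{H}^{n+1}-(p_{H,2}^{n+1}-p_{H,2}^{n})$, absorbing the residual $\|p_{H,2}^{n+1}-p_{H,2}^{n}\|_{b}^{2}$ via \eqref{stability_condition_2}, and bounding $(f^{n},p_{H}^{n+1}-p_{H}^{n})$ through the $c$-norm with a weighted Young inequality are exactly the paper's steps, with the same test functions and the same bookkeeping.

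The genuine gap sits precisely in the step you defer as ``the delicate balancing'': you treat $\gamma_a,\gamma_c$ as Young weights to be calibrated after an ordinary Cauchy--Schwarz estimate of the cross terms. In the paper they are \emph{strengthened} Cauchy--Schwarz constants, $c(q_{1},q_{2})\le\gamma_c\|q_{1}\|_{c}\|q_{2}\|_{c}$ for $q_{1}\in Q_{H,1}$, $q_{2}\in Q_{H,2}$ with $\gamma_c<1$ (and analogously for $a$ on the displacement subspaces induced by the elasticity solves); these angle constants of the decomposition are the \emph{only} place the splitting $Q_{H}=Q_{H,1}+Q_{H,2}$ enters the stability proof, and no weight calibration can substitute for them. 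To see this, count coefficients of $\|p_{H,i}^{n+1}-p_{H,i}^{n}\|_{c}^{2}$ in units of $1/(2\tau)$: the diagonal $c$-terms contribute $2$, while plain Cauchy--Schwarz plus Young with weight $\theta$ on a cross term keeps $2-\theta$ at level $n+1$ and carries $1/\theta$ to level $n$. Telescoping to the claimed estimate with energy coefficient $g$ on the increments and a positive margin $m$ reserved twice (once for the CFL absorption of the explicit $b$-residue, once for the source) forces
\begin{equation*}
\frac{1}{\theta}\le g \quad\text{and}\quad 2-\theta\ge g+2m,
\qquad\text{hence}\qquad (1-g)^{2}+2mg\le 0,
\end{equation*}
so $m=0$ and $g=1$; asymmetric weights for the two cross terms fail by the same computation summed over the pair. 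But $g=1$ makes \eqref{stability_condition_2} read $\|q_{2}\|_{b}^{2}\le 0$, i.e.\ $Q_{H,2}=\{0\}$, and renders the source bound $2\tau^{2}(1-\gamma_c)^{-1}\|M^{1/2}f^{n}\|^{2}$ vacuous. The strengthened inequality, by contrast, charges only $\frac{\gamma_c}{2}$ at \emph{each} level, leaving the kept coefficient $2-\gamma_c=\gamma_c+2(1-\gamma_c)$, which funds the energy term plus both $(1-\gamma_c)$ margins --- this is why the same $\gamma_c$ appears in the CFL condition, the energy, and the source constant. (For the $a$-cross terms the defect is only cosmetic: plain Cauchy--Schwarz forces $\gamma_a=1$ and annihilates the nonnegative discard $2(1-\gamma_a)\|u_{H,i}^{n+1}-u_{H,i}^{n}\|_{a}^{2}$, which the argument never uses; for the $c$-cross terms it is fatal, because all the slack in the scheme lives there.) To repair your attempt, replace the calibration step by the strengthened Cauchy--Schwarz inequalities of the decomposition, noting that $p_{H,i}^{n+1}-p_{H,i}^{n}\in Q_{H,i}$ so the inequalities indeed apply to the increments.
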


\begin{proof}
We have 
\begin{align*}
d(\cfrac{u_{H,i}^{n+1}-u_{H,i}^{n}}{\tau},p_{H,i}^{n+1}-p_{H,i}^{n}) & =\cfrac{1}{\tau}\|u_{H,i}^{n+1}-u_{H,i}^{n}\|_{a}^{2}\\
d(\cfrac{u_{H,i}^{n}-u_{H,i}^{n-1}}{\tau},p_{H,j}^{n+1}-p_{H,j}^{n}) & =\cfrac{1}{\tau}a(u_{H,i}^{n}-u_{H,i}^{n-1},u_{H,j}^{n+1}-u_{H,j}^{n})
\end{align*}

We define $0<\gamma_{a}<1$ and $0<\gamma_c<1$ to be the constants used in strengthened Cauchy-Schwarz inequalities \cite{aldaz2013strengthened} with 
\begin{align*}
c(p_{1},p_{2}) & \leq\gamma_c\|p_{1}\|_{c}\|p_{2}\|_{c}\\
a(u_{1},u_{2}) & \leq\gamma_{a}\|u_{1}\|_{a}\|u_{2}\|_{a}.
\end{align*}
Thus, we have 
\begin{align*}
d(\cfrac{u_{H,i}^{n+1}-u_{H,i}^{n}+u_{H,j}^{n}-u_{H,j}^{n-1}}{\tau},p_{H,i}^{n+1}-p_{H,i}^{n})\geq\cfrac{1}{\tau}\Big(\|u_{H,i}^{n+1}-u_{H,i}^{n}\|_{a}^{2}-\gamma_{a}\|u_{H,i}^{n}-u_{H,i}^{n-1}\|_{a}\|u_{H,j}^{n+1}-u_{H,j}^{n}\|_{a}\Big)\\
\geq \cfrac{1}{\tau}\Big(\|u_{H,i}^{n+1}-u_{H,i}^{n}\|_{a}^{2}-\cfrac{\gamma_a}{2}\|u_{H,i}^{n}-u_{H,i}^{n-1}\|_{a}^2-\cfrac{\gamma_a}{2}\|u_{H,j}^{n+1}-u_{H,j}^{n}\|_{a}^2\Big). 
\end{align*}
On the other side,
\begin{align*}
&~~~~~b(p_{H,1}^{n+1}+p_{H,2}^n, p_H^{n+1}-p_H^n) \\
&=b(p_{H,1}^{n+1}+p_{H,2}^{n+1}-p_{H,2}^{n+1}+p_{H,2}^n, p_H^{n+1}-p_H^n)\\
&=b(p_H^{n+1},p_H^{n+1}-p_H^n)-b(p_{H,2}^{n+1}-p_{H,2}^n, p_H^{n+1}-p_H^n)\\
&= \frac{1}{2}\norm{p_H^{n+1}}_b^2-\frac{1}{2}\norm{p_H^{n}}_b^2+\frac{1}{2}\norm{p_H^{n+1}-p_H^n}_b^2-b(p_{H,2}^{n+1}-p_{H,2}^n, p_H^{n+1}-p_H^n),
\end{align*}
and hence,
$$b(p_{H,2}^{n+1}-p_{H,2}^n, p_H^{n+1}-p_H^n)\leq \frac{1}{2}\norm{p_{H,2}^{n+1}-p_{H,2}^n}_b^2+\frac{1}{2}\norm{p_H^{n+1}-p_H^n}_b^2.$$
Also, we have
\begin{align*}
c(\cfrac{p_{H,i}^{n+1}-p_{H,i}^{n}+p_{H,j}^{n}-p_{H,j}^{n-1}}{\tau},p_{H,i}^{n+1}-p_{H,i}^{n}) & =\cfrac{1}{\tau}\|p_{H,i}^{n+1}-p_{H,i}^{n}\|_{c}^{2}+\cfrac{1}{\tau}c(p_{H,i}^{n+1}-p_{H,i}^{n},p_{H,j}^{n}-p_{H,j}^{n-1})\\
 & \geq\cfrac{1}{\tau}\|p_{H,i}^{n+1}-p_{H,i}^{n}\|_{c}^{2}-\cfrac{\gamma_c}{\tau}\|p_{H,i}^{n+1}-p_{H,i}^{n}\|_{c}\|p_{H,j}^{n}-p_{H,j}^{n-1}\|_{c}\\
 &\geq \cfrac{1}{\tau}\|p_{H,i}^{n+1}-p_{H,i}^{n}\|_{c}^{2}-\cfrac{\gamma_c}{2\tau}\|p_{H,i}^{n+1}-p_{H,i}^{n}\|_{c}^2-\cfrac{\gamma_c}{2\tau}\|p_{H,j}^{n}-p_{H,j}^{n-1}\|_{c}^2\\
 &\quad\quad\text{for }i,j=1,2, \;i\neq j.
\end{align*}

Taking $q_1= p_{H,1}^{n+1}-p_{H,1}^n, q_2=p_{H,2}^{n+1}-p_{H,2}^n, v= u_H^{n+1}-u_H^n$, we have 
\begin{align*}
 & \cfrac{1}{2\tau}\Big(\tau\|p_H^{n+1}\|_{b}^{2}\Big)-\cfrac{1}{2}\|p_{H,2}^{n+1}-p_{H,2}^{n}\|_{b}^{2}\\
 & +\cfrac{1}{2\tau}\Big((2-\gamma_c)\sum_{i=1,2}\|p_{H,i}^{n+1}-p_{H,i}^{n}\|_{c}^{2}+(2-\gamma_{a})\sum_{i=1,2}\|u_{H,i}^{n+1}-u_{H,i}^{n}\|_{a}^{2}\Big)\\
\leq & (f^{n},p_H^{n+1}-p_H^{n})+\cfrac{1}{2\tau}\Big(\gamma_c   \sum_{i=1,2}\|p_{H,i}^{n}-p_{H,i}^{n-1}\|_{c}^{2}+\gamma_{a}\sum_{i=1,2}\|u_{H,i}^{n}-u_{H,i}^{n-1}\|_{a}^{2}+\tau\|p_H^{n}\|_{b}^{2}\Big).
\end{align*}
After rearrangement, we get
\begin{align*}
 & \cfrac{1}{2\tau}\sum_{i=1,2}\Big((1-\gamma_c)\|p_{H,i}^{n+1}-p_{H,i}^{n}\|_{c}^{2}+2(1-\gamma_{a})\|u_{H,i}^{n+1}-u_{H,i}^{n}\|_{a}^{2}\Big)\\
 & +\cfrac{1}{2\tau}\Big(\sum_{i=1,2}\Big(\gamma_c\|p_{H,i}^{n+1}-p_{H,i}^{n}\|_{c}^{2}+\gamma_{a}\|u_{H,i}^{n+1}-u_{H,i}^{n}\|_{a}^{2}\Big)+\tau\|p_H^{n+1}\|_{b}^{2}\Big)\\
 &+\frac{1}{2}\Big( \cfrac{1-\gamma_c}{\tau}   \sum_{i=1,2}\|p_{H,i}^{n+1}-p_{H,i}^{n}\|_{c}^{2} - \|p_{H,2}^{n+1}-p_{H,2}^{n}\|_{b}^{2}\Big) \\
\leq & (f^{n},p_H^{n+1}-p_H^n)+\frac{1}{2\tau}\Big(\gamma_c   \sum_{i=1,2}\|p_{H,i}^{n}-p_{H,i}^{n-1}\|_{c}^{2}+\gamma_{a}\sum_{i=1,2}\|u_{H,i}^{n}-u_{H,i}^{n-1}\|_{a}^{2}+\tau\|p_H^n\|_{b}^{2}\Big).
\end{align*}

If $\|q_{2}\|_{b}^{2}\leq\cfrac{(1-\gamma_c)}{\tau}\|q_{2}\|_{c}^{2}$
for all $q_{2}\in V_{2,H}$, we have 
\begin{align*}
 & \cfrac{1}{2\tau}\sum_{i=1,2}\Big((1-\gamma_c)\|p_{H,i}^{n+1}-p_{H,i}^{n}\|_{c}^{2}+2(1-\gamma_{a})\|u_{H,i}^{n+1}-u_{H,i}^{n}\|_{a}^{2}\Big)\\
 & +\cfrac{1}{2\tau}\Big(\sum_{i=1,2}\Big(\gamma_c\|p_{H,i}^{n+1}-p_{H,i}^{n}\|_{c}^{2}+\gamma_{a}\|u_{H,i}^{n+1}-u_{H,i}^{n}\|_{a}^{2}\Big)+\tau\|p_H^{n+1}\|_{b}^{2}\Big)\\
\leq & (f^{n},p_H^{n+1}-p_H^n)+\cfrac{1}{2\tau}\Big(\sum_{i=1,2}\Big(\gamma_c\|p_{H,i}^{n}-p_{H,i}^{n-1}\|_{c}^{2}+\gamma_{a}\|u_{H,i}^{n}-u_{H,i}^{n-1}\|_{a}^{2}\Big)+\tau\|p_H^n\|_{b}^{2}\Big)
\\\leq & \cfrac{\tau}{(1-\gamma_c)}\norm{M^{1/2}f^n}^2+\frac{1-\gamma_c}{4\tau}\sum_{i=1,2}2\|p_{H,i}^{n+1}-p_{H,i}^{n}\|_{c}^{2}+\\
&~~\cfrac{1}{2\tau}\Big(\sum_{i=1,2}\Big(\gamma_c\|p_{H,i}^{n}-p_{H,i}^{n-1}\|_{c}^{2}+\gamma_{a}\|u_{H,i}^{n}-u_{H,i}^{n-1}\|_{a}^{2}\Big)+\tau\|p_H^n\|_{b}^{2}\Big).
\end{align*}
This gives us the stability estimate for the partially explicit scheme:
\begin{align*}
&~~\sum_{i=1,2}\Big(\gamma_c\|p_{H,i}^{n+1}-p_{H,i}^{n}\|_{c}^{2}+\gamma_{a}\|u_{H,i}^{n+1}-u_{H,i}^{n}\|_{a}^{2}\Big)+\tau\|p_H^{n+1}\|_{b}^{2}\Big)\\
&\leq \sum_{i=1,2}\Big(\gamma_c\|p_{H,i}^{n}-p_{H,i}^{n-1}\|_{c}^{2}+\gamma_{a}\|u_{H,i}^{n}-u_{H,i}^{n-1}\|_{a}^{2}\Big)+\tau\|p_H^n\|_{b}^{2}\Big)+\cfrac{2\tau^2}{(1-\gamma_c)}\norm{M^{1/2}f^n}^2.
\end{align*}
\end{proof}

\section{Space Construction}\label{sec:construction}

Finite element spaces constructed by CEM-GMsFEM are good choices for $Q_{H,1}$ and $V_H$ since the previous study shows that these finite element spaces have the ability to capture main characteristics of the solution.   
In this section, we first present the multiscale finite element space $Q_{H,1}$ and $V_H$ that is constructed by CEM-GMsFEM. 
Next, we present the construction of the space $Q_{H,2}$ which satisfies the explicit stability condition and helps in reducing errors. The choice of $Q_{H,2}$ is inspired by \cite{chung2022contrast}. With the selected $Q_{H,2}$,
the scheme achieves a convergence rate that is independent of the high contrast permeability value, denoted as $\kappa$. The proof of convergence is omitted in this work; however, readers who are interested in the motivation of the choices of $Q_{H,2}$ and the proof of convergence can find detailed information in \cite{chung2022contrast}.  
Recalling that $Q=H^1_0(\Omega).$ For any set $S$, we let $Q(S)=H^1(S)$ and $Q_0(S)=H^1_0(S).$
\subsection{The CEM-GMsFEM method}
The CEM-GMsFEM starts with the auxiliary basis functions by solving spectral problems on each coarse element $K_i$ over the spaces $V(K_i):= {V \vert}_{ K_i}$ and $Q(K_i):= {Q \vert}_{ K_i}$. More precisely, we consider local eigenvalue problems (of Neumann type): find $(\lambda_j^i,v_j^i)\in \mathbb{R}\times V(K_i)$ such that
\begin{equation}\label{eq:eig1}
a_i(v_j^i,v)=\lambda_j^i s^1_i(v_j^i,v)
\end{equation}
for all $v \in V(K_i)$ and find 
$(\zeta_j^i,q_j^i)\in \mathbb{R}\times Q(K_i)$ such that 
\begin{equation}\label{eq:eig2}
b_i(q_j^i,q)=\zeta_j^i s^2_i(q_j^i,q)
\end{equation}
for all $q \in Q(K_i)$, where 
$$a_i(u,v) := \int_{K_i} \sigma(u) : \varepsilon(v)\dx, \quad b_i(p,q) := \int_{K_i} \frac{\kappa}{\nu} \nabla p\cdot \nabla q \dx,$$ 
$$s^1_i(u,v):=\int_{K_i} \tilde{\sigma}u\cdot v\dx, \quad s^2_i(p,q):=\int_{K_i} \tilde{\kappa}pq\dx,$$ 
\begin{equation*}
\tilde\sigma := \sum_{i=1}^{\Nv}(\lambda+2\mu) | \nabla \chi_i^1 |^2,\qquad
\tilde\kappa := \sum_{i=1}^{\Nv}\frac{\kappa}{\nu} | \nabla \chi_i^2 |^2.
\end{equation*}
The functions ${\chi_i^1}$ and ${\chi_i^2}$ are neighborhood-wise defined
partition of unity functions \cite{bm97} on the coarse grid. To be more precise, for $k  =1,2$ the function $\chi_i^{k}$ satisfies $H \abs{\nabla \chi_i^{k}} = O(1)$, $0 \leq \chi_i^{k+1} \leq 1$, and $\sum_{i=1}^{\Nv} \chi_i^{k} = 1$. One can take $\{ \chi_i^k \}_{i=1}^{\Nv}$ to be the set of standard multiscale basis functions or the standard piecewise linear functions. 
 
Assume that the eigenvalues $\{\lambda_j^i\}$ (resp. $\{ \zeta_j^i \}$) are arranged in ascending order and that the eigenfunctions satisfy the normalization condition $s_i^1(v_j^i,v_j^i)=1$ as well as $s_i^2(q_j^i,q_j^i)=1$ for any $i$ and $j$. 
Next, choose $J_i^1\in \mathbb{N}^+$ and define the local auxiliary space $V_{\text{aux}}(K_i):= \text{span} \{v_j^{i}:1\leq j \leq J_i^1 \}$. 
Similarly, we choose $J_i^2 \in \mathbb{N}^+$ and define $Q_{\text{aux},1}(K_i) := \text{span} \{q_j^i: 1 \leq j \leq J_i^2\}$. 
Based on these local spaces, we define the global auxiliary spaces $V_{\text{aux}}$ and $Q_{\text{aux},1}$ by
$$V_{\text{aux}} := \bigoplus_{i=1}^{\Ne} V_{\text{aux}}(K_i) \quad \text{and} \quad Q_{\text{aux},1} := \bigoplus_{i=1}^{\Ne} Q_{\text{aux},1}(K_i).$$
The corresponding inner products for the global auxiliary multiscale spaces are defined by
\begin{eqnarray*}
s^1(u,v):=\sum_{i=1}^{\Ne}s_i^1(u,v), \qquad 
s^2(p,q):=\sum_{i=1}^{\Ne}s_i^2(p,q) 
\end{eqnarray*}
for all $u,v \in V_{\text{aux}}$ and $p,q\in Q_{\text{aux},1}$.

Further, we define projection operators $\pi_1: V \to V_{\text{aux}}$ and $\pi_2: Q \to Q_{\text{aux},1}$ such that for all $v \in V,\ q \in Q$ it holds that
\begin{eqnarray*}
\pi_1(v):=\sum_{i=1}^{\Ne}\sum_{j=1}^{J_i^1}s^1_i(v,v_j^i)v_j^i, \qquad
\pi_2(q):=\sum_{i=1}^{\Ne}\sum_{j=1}^{J_i^2}s^2_i(q,q_j^i)q_j^i.
\end{eqnarray*}
We also denote the kernel of the operator $\pi_1$ restricted to $V$ and the kernel of the operator $\pi_2$ restricted to $Q$ as 
$$
\tilde{V}=\{w\in V\mid \pi_1(w)=0\},\qquad
\tilde{Q}=\{q\in Q\mid \pi_2(q)=0\}.
$$

Next, we construct the multiscale spaces for the practical computations. 
For each coarse element $K_i$, we define the oversampled region $K_{i,\ell}\subset\Omega$ obtained by enlarging $K_i$ by $\ell$ layers, i.e.,
\begin{equation}
\label{eq:K_il}
 K_{i,0} := K_i, \quad K_{i,\ell} := \bigcup \left\{ K\in \mathcal{T}^H : K \cap  K_{i,\ell-1} \neq \emptyset \right\}, \quad \ell \in \mathbb{N}^+.
 \end{equation}
We define $V(K_{i,\ell}):=[H_0^1(K_{i,\ell})]^d$ and $Q(K_{i,\ell}):=H^1_0(K_{i,\ell})$. Then, for each pair of auxiliary functions $v_j^i\in V_{\text{aux}}$ and $q_j^i\in Q_{\text{aux},1}$,
we solve the following  minimization problems:
find $\psi_{j,\text{ms}}^{(i)}\in V(K_{i,\ell})$ such that
\begin{equation}\label{eq:mineq1}
\psi_{j,\text{ms}}^{(i)}=\text{argmin}\Big\{a(\psi,\psi)+s^1\big(\pi_1(\psi)-v_j^i,\pi_1(\psi)-v_j^i\big):\,\psi\in V(K_{i,\ell})\Big\}
\end{equation}
and 
find $\phi_{j,\text{ms}}^{(i)} \in Q(K_{i,\ell})$ such that
\begin{equation}\label{eq:mineq2}
\phi_{j,\text{ms}}^{(i)}=\text{argmin}\Big\{b(\phi,\phi)+s^2\big(\pi_2(\phi)-q_j^i,\pi_2(\phi)-q_j^i\big):\,\phi\in Q(K_{i,\ell})\Big\}.
\end{equation}
Note that problem (\ref{eq:mineq1}) is equivalent to the local problem 
\begin{equation}\label{eq:mineq1_loc}
a(\psi_{j,\text{ms}}^{(i)},v)+s^1\big(\pi_1(\psi_{j,\text{ms}}^{(i)}),\pi_1(v)\big)=s^1\big(v_j^i,\pi_1(v)\big)
\end{equation}
for all $v\in V(K_{i,\ell})$, whereas problem (\ref{eq:mineq2}) is equivalent to
\begin{equation}\label{eq:mineq2_loc}
b(\phi_{j,\text{ms}}^{(i)},q)+s^2\left(\pi_2(\phi_{j,\text{ms}}^{(i)}),\pi_2(q)\right)=s^2\big(q_j^i,\pi_2(q)\big)
\end{equation}
for all $q\in Q(K_{i,\ell})$.
Finally, for fixed parameters $\ell$, $J_i^1$, and $J_i^2$, the multiscale spaces $V_{H}$ and $Q_{H,1}$ are defined by 
 $$V_{H} := \text{span}\{ \psi_{j,\text{ms}}^{(i)}: 1\leq j \leq J_i^1,\ 1\leq i \leq \Ne \} \quad \text{and} \quad Q_{H,1} :=\text{span} \{ \phi_{j,\text{ms}}^{(i)}: 1 \leq j \leq J_i^2,\ 1\leq i \leq \Ne\}.$$

 The multiscale basis functions can be interpreted as approximations to global multiscale basis functions $\psi_j^{(i)}\in V$ and $\phi_j^{(i)}\in Q$, similarly defined by
 
\begin{align}\label{eq:minglo1}
\psi_j^{(i)} &= \text{argmin}\Big\{a(\psi,\psi)+s^1\big(\pi_1(\psi)-v_j^i,\pi_1(\psi)-v_j^i\big): \,\psi\in V\Big\}, \\\label{eq:minglo2}
\phi_j^{(i)}&= \text{argmin}\Big\{b(\phi,\phi)+s^2\big(\pi_2(\phi)-q_j^i,\pi_2(\phi)-q_j^i\big): \,\phi\in Q\Big\}.
\end{align}
This is equivalent to solve the global problems
\begin{align}\label{eq:minglo1eqn}
a(\psi_{j}^{(i)},v)+s^1\big(\pi_1(\psi_{j}^{(i)}),\pi_1(v)\big)&=s^1\big(v_j^i,\pi_1(v)\big), \quad \forall  v\in V,\\
\label{eq:minglo2eqn}
b(\phi_{j}^{(i)},q)+s^2\left(\pi_2(\phi_{j}^{(i)}),\pi_2(q)\right)&=s^2\big(q_j^i,\pi_2(q)\big),\quad \forall  q\in Q.
\end{align}
These basis functions have global support in the domain $\Omega$ but, as shown in \cite{chung2017constraint}, decay exponentially outside some local (oversampled) region. This property plays a vital role in the convergence analysis of CEM-GMsFEM and justifies the use of local basis functions in $V_{H}$ and $Q_{H,1}$. 

The discussions related to CEM-GMsFEM as presented in \cite{chung2017constraint} show that $Q_{H,1}$ is a good approximation of the complement of $\tilde{Q}$. Thus, aiming to enhance the simulation accuracy of the pressure term, we construct a space $Q_{H,2}$ in $\tilde{Q}$.

\subsection{$Q_{H,2}$ Construction}
In this section, we construct a subspace $Q_{H,2}$ that satisfies the CFL-type condition \eqref{stability_condition_2} such that the scheme is stable. The motivation of the construction and the proof of the proposed $Q_{H,2}$ that meets the CFL-type condition can be referred to \cite{chung2022contrast}. 
One of the constructions for $Q_{H,2}$ is based on a CEM (Constrained Energy Minimization) type structure, which is similar to CEM-GMsFEM. 
For each coarse element $K_i$, we solve an eigenvalue problem to get the second type of auxiliary basis. More precisely, we find eigenpairs $(\xi_j^{(i)},\gamma_j^{(i)})\in(Q(K_i)\cap \tilde{Q})\times \mathbb{R}$ by solving
$$
b_i(\xi_j^{(i)},v)= \gamma^{(i)}_j c(\xi^{(i)}_j, v), \forall v\in V(K_i)\cap \tilde{V}\times \mathbb{R}
$$

For each $K_i$, we rearrange the eigenvalues in an ascending order and choose the first few $J_i$ eigenfunctions corresponding to the smallest $J_i$ eigenvalues. The resulting auxiliary space is 
$$
Q_{\text{aux},2} := \text{span}\{\xi_j^{(i)} : 1\leq i\leq N_e, 1\leq j\leq J_i\}. 
$$

With auxiliary space $V_{\text{aux},2}$, we construct the basis for $V_{H,2}$ by solving the following problem on each oversampling region $K_{i,l}$, which is defined in \eqref{eq:K_il}. For each auxiliary basis function $\xi_j^{(i)}$, we define the basis function $\phi_{j,\ms 2}^{(i)} \in Q(K_{i,l})$ such that for some $\mu_j^{(i),1}\in Q_{\text{aux},1}$ and $\mu_j^{(i),2}\in Q_{\text{aux},2}$, we have
\begin{subequations}
    \begin{alignat}{3}
       b\left(\phi_{j,\ms 2}^{(i)} , q\right)+s^2\left(\mu_{j}^{(i), 1}, q\right)+c\left(\mu_{j}^{(i), 2}, q\right) &=0, \quad \forall q \in Q\left(K_{i,l}\right) \\ 
       s^2\left(\phi_{j,\ms 2}^{(i)} , q\right) &=0, \quad \forall q \in Q_{\text{aux},1} \\
       c\left(\phi_{j,\ms 2}^{(i)} , q\right) &=c\left(\xi_{j}^{(i)}, q\right), \quad \forall q \in Q_{\text{aux},2}
    \end{alignat}
\end{subequations}

We define 
$$
Q_{H,2}= \text{span}\{ \phi_{j,\ms 2}^{(i)} \mid 1\leq i \leq N_e, 1\leq j \leq J_i\}.
$$

For any $q\in Q_{H,2}$, we can derive as in \cite{chung2022contrast} that
$$
\norm{q}_b^2\leq C_1^2 H^{-2} \norm{q}_c^2
$$
and hence we get a more explicit stability condition based on \eqref{stability_condition_2}:
$$
\tau \leq C_1^{-2} H^2(1-\gamma_c).
$$

\section{Numerical Results}\label{sec:numerical}
In this section, we present some numerical results obtained by using the proposed partially explicit method. 
The (relative) energy errors between the multiscale and the fine-scale solutions are defined below 
by$$e_{L^2}^n:=\frac{\norm{p_{\ms}^{n}-p_h^n}}{\norm{p_h^n}} \tand e_{energy}^n :=\frac{\norm{p_{\ms}^{n}-p_h^n}_b}{\norm{p_h^n}_b};$$
they will serve as indicators for evaluating the performance of algorithm. 
\begin{figure}[htbp!]
\centering
\includegraphics[width = 1.7in]{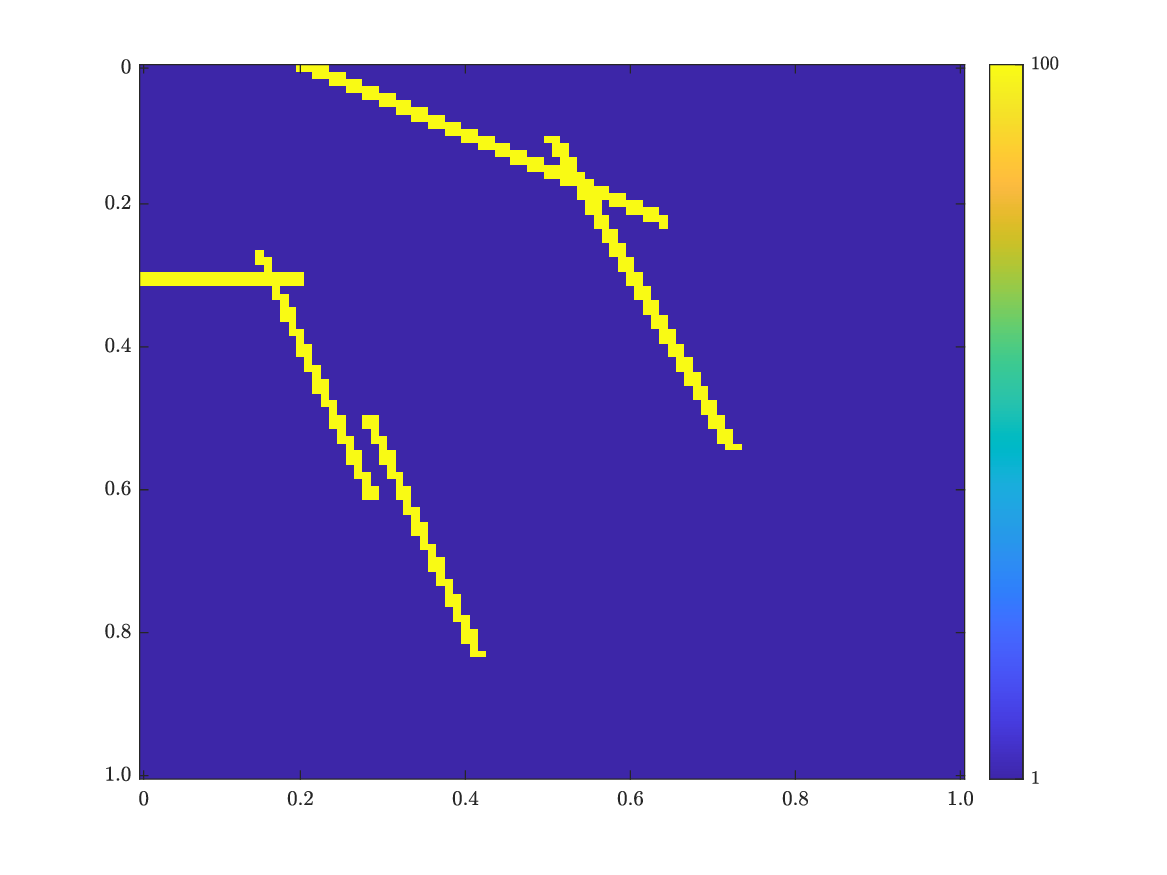}\quad \includegraphics[width = 1.7in]{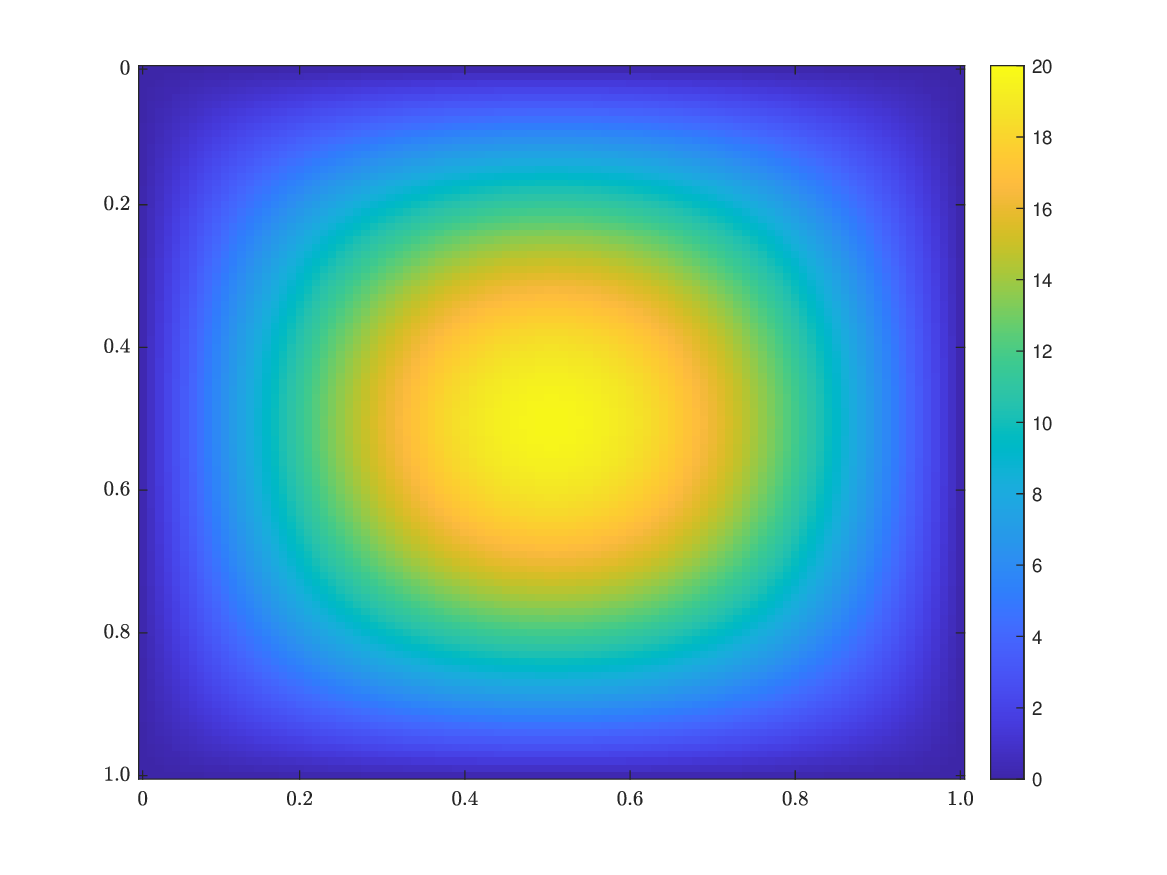}
\includegraphics[width = 1.7in]{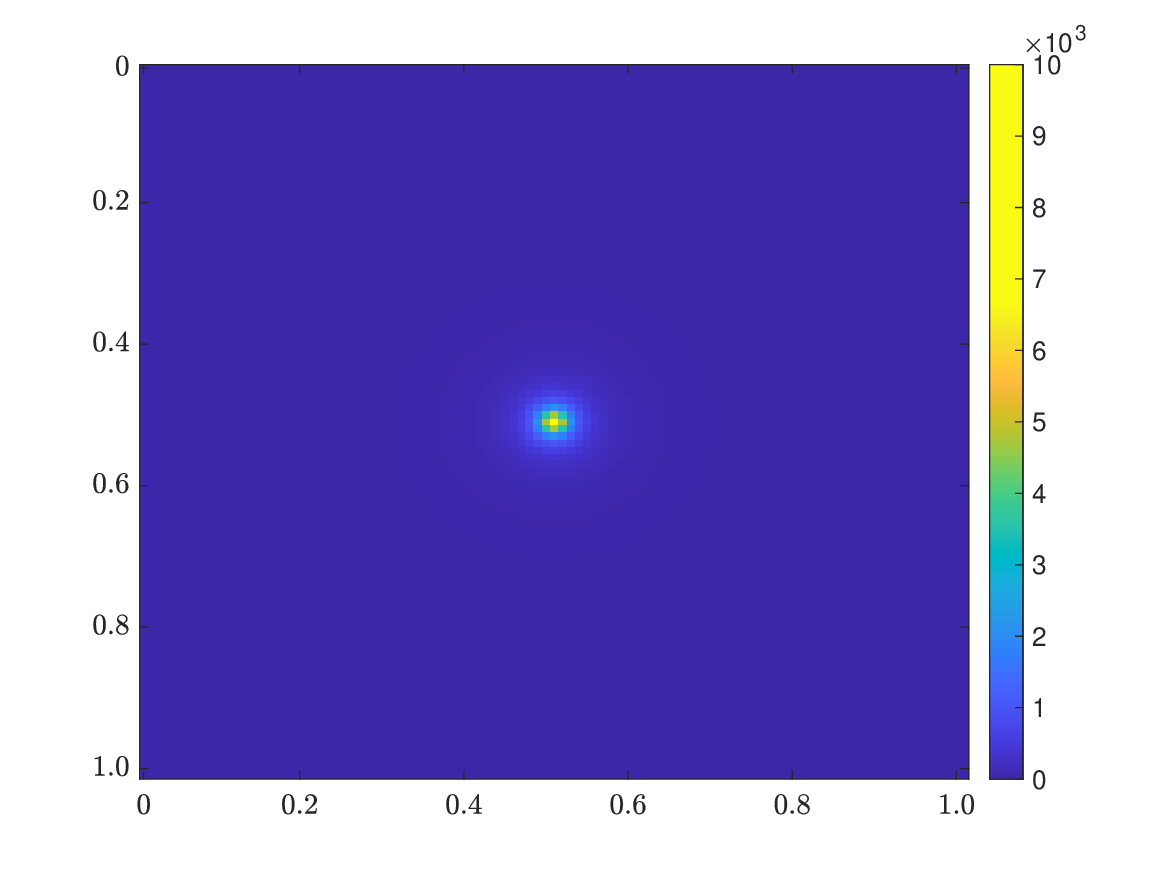}
\caption{Left: Young's Modulus for Example \ref{exp:1}. Middle: A smooth source term $f_1$. Right: A nearly singular source term $f_2$.}
\label{fig:ym_exp1_partial}
\end{figure}

For each example, we test and compare three different schemes
\begin{itemize}
\item the implicit method with CEM bases.
\item the implicit method with CEM bases and additional degrees of freedom from $Q_{H,2}$. 
\item the partially explicit splitting scheme with CEM bases and additional degrees of freedom from $Q_{H,2}$.
\end{itemize}
\begin{example}\label{exp:1}
The problem setting in this example is as follows: 
\begin{enumerate}
\item We set $\Omega = (0,1)^2$, $\alpha = 0.9$, $M = 1$, $\nu_p = 0.2$ (Poisson ratio), and $\nu = 1$. 
\item  The terminal time is $T = 1/100$. The time step size is $\tau = 10^{-4}$, i.e., $N = 100$. 
\item The Young's modulus $E$ is depicted in Figure \ref{fig:ym_exp1_partial}. We set the permeability to be $\kappa = E$. The permeability field is heterogeneous with high contrast streaks. 
\item In this example, we test the model with two source functions separately: 

$f_1(x_1, x_2) \equiv 2 \pi^2\sin(\pi x_1) \sin(\pi x_2)$ and $f_2(x_1,x_2) \equiv 1/\left((x_1-0.5)^2+(x_2-0.5)^2+10^{-4}\right)$. 

The initial condition is $p(x_1, x_2) = 100x_1(1-x_1)x_2(1-x_2)$. 
\item The number of (local) offline (pressure and displacement) basis functions is $J = J_i^1 = J_i^2 = 2$ and the number of oversampling layers is $\ell = 2$. 
\item The number of (local) $Q_{H,2}$ basis functions is $J_i=2$. 
\item The coarse mesh is $10 \times 10$ and the overall fine mesh is $100 \times 100$. 
\end{enumerate}
\end{example}

Figures \ref{fig:5_1_soln_profile_partial} and \ref{fig:5_1_0soln_profile_partial} show the solution profiles at the terminal time $T=0.01$ for source term $f_1$ and $f_2$ respectively. In Tables \ref{exp:5_1_energy} - \ref{exp:5_1_1l2}, we present the energy errors and $L_2$ errors of the pressure term with time. We also employ illustrative diagrams Figures \ref{fig:error_partial} and \ref{fig:error_partial_2} to provide an intuitive visualization of the aforementioned errors. The red curve denotes the error due to CEM without additional basis functions. 
The additional degrees of freedom are treated both implicitly (blue curve) and explicitly (yellow
curve). As we see, these two curves coincide. This indicates that our proposed partial explicit
method with the time stepping which is chosen independent of contrast provides as accurate solution as full backward Euler. Consequently, this backs up our discussions on the partially explicit method. The numerical results shows that with the additional space $Q_{H,2}$, the approximation for displacement $u$ does not improve prominently and hence the data of errors has not been included in this paper. With additional basis functions in $Q_{H,2}$,  the approximation for pressure $p$ is improved in terms of energy norm but the improvement in terms of $L_2$ norm is not obvious. The CEM basis can effectively reflects details of the solution due to the high-contrast features in the media and the $Q_{H,2}$ serves as a supplement for missing information. In the simulation, when the source term $f_1$ is smooth, CEM-GMsFEM without additional basis functions produces results similar to those of CEM-GMsFEM with additional basis functions, whether treated explicitly or implicitly. However, for the nearly singular source term $f_2$, the implicit CEM-GMsFEM with $Q_{H,2}$ and the partially explicit method outperforms the implicit CEM-GMsFEM without $Q_{H,2}$.

\begin{figure}[htbp!]
\centering
\includegraphics[width = 1.7in]{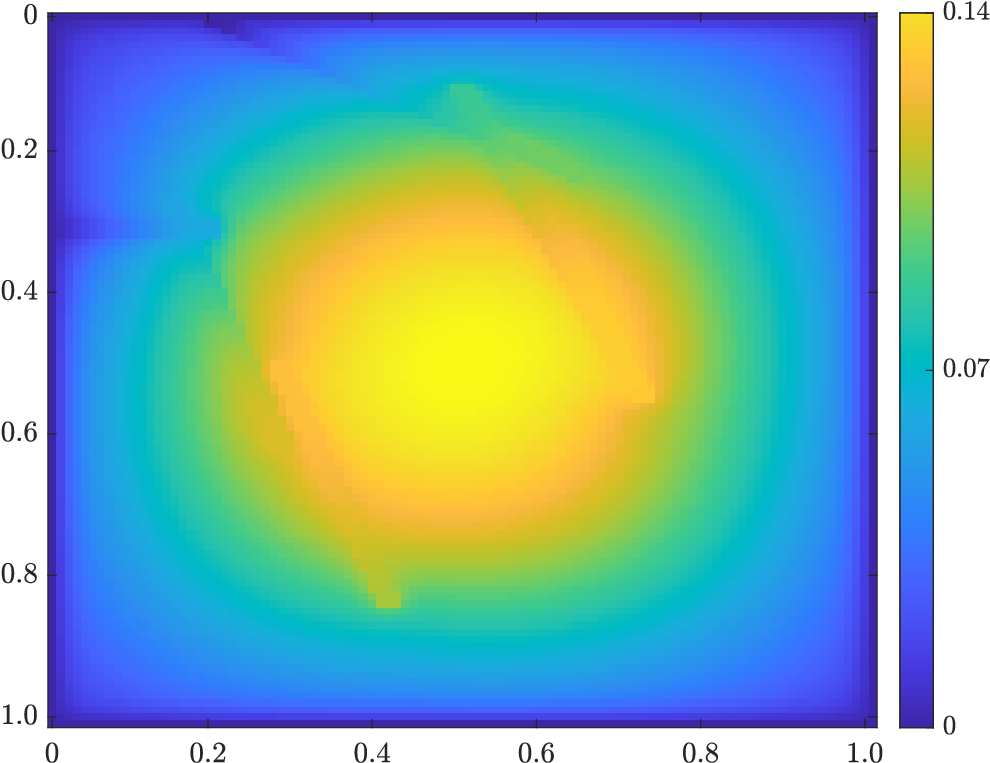} \quad
\includegraphics[width = 1.7in]{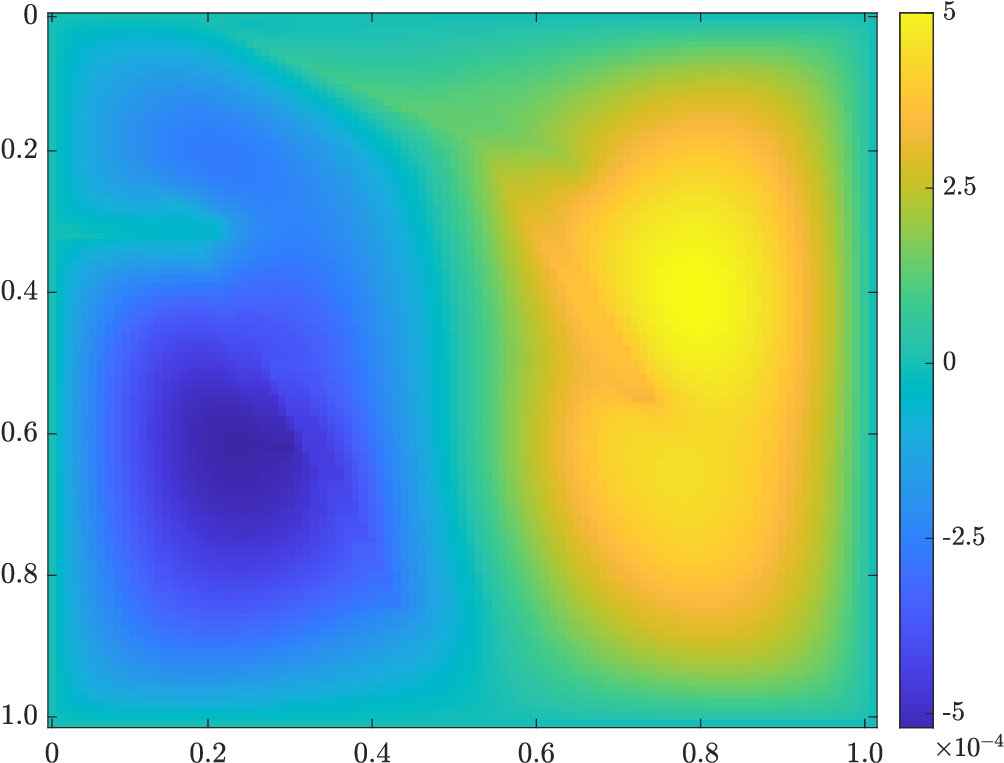}\quad
\includegraphics[width = 1.7in]{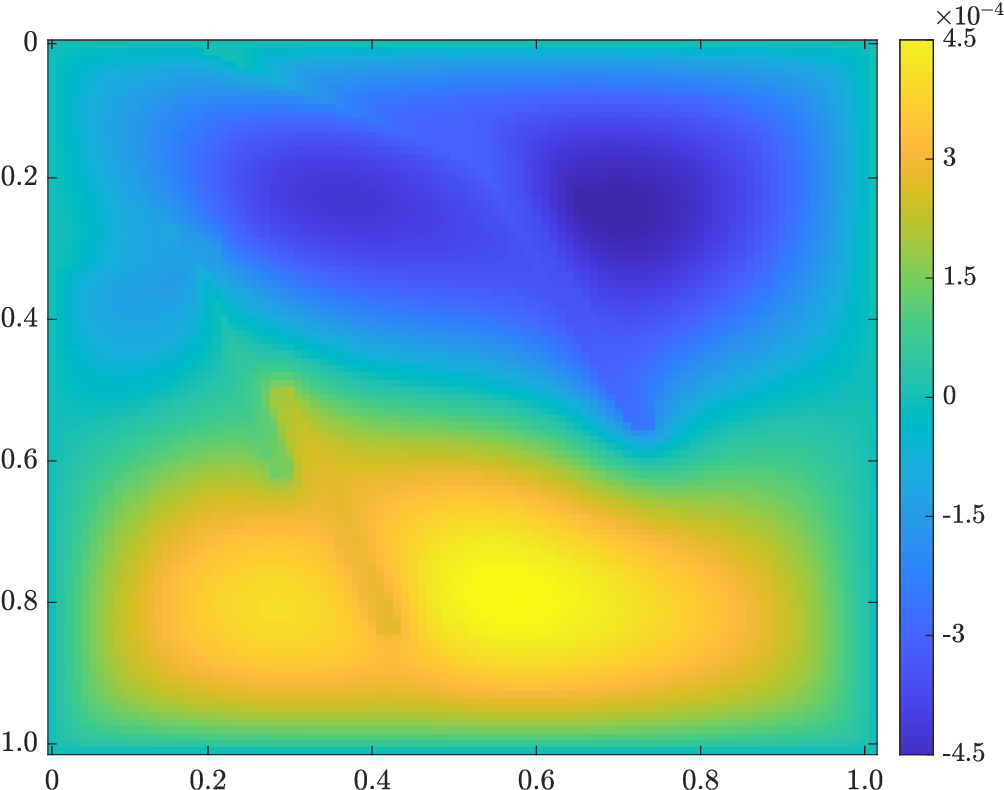}
\includegraphics[width = 1.7in]{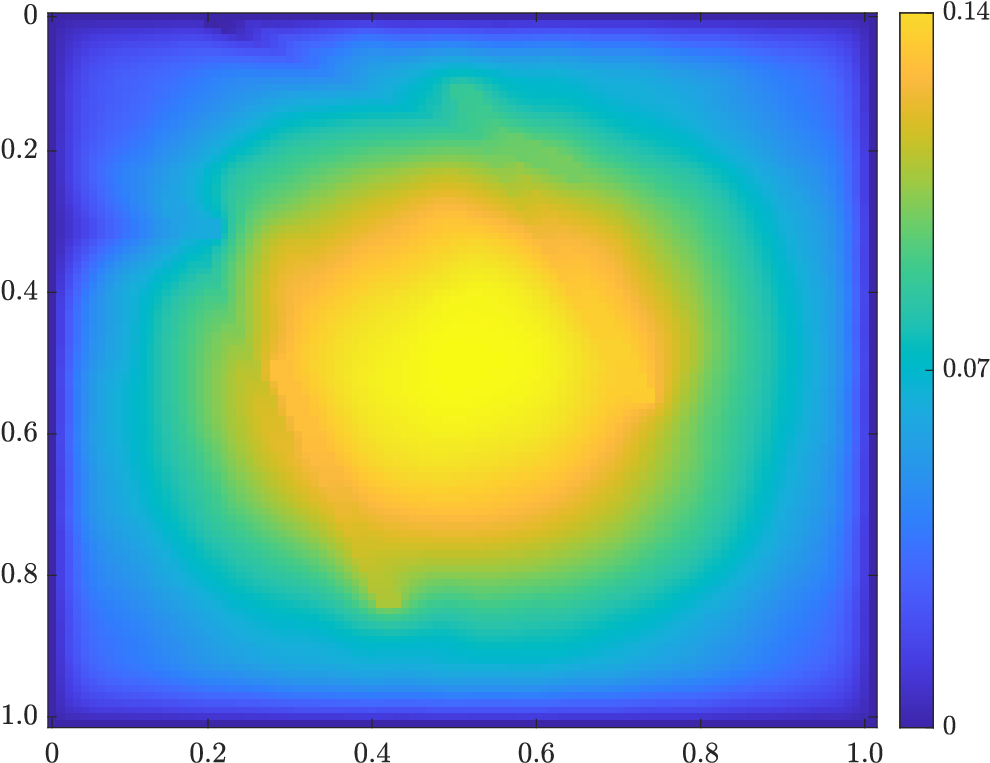}\quad
\includegraphics[width = 1.7in]{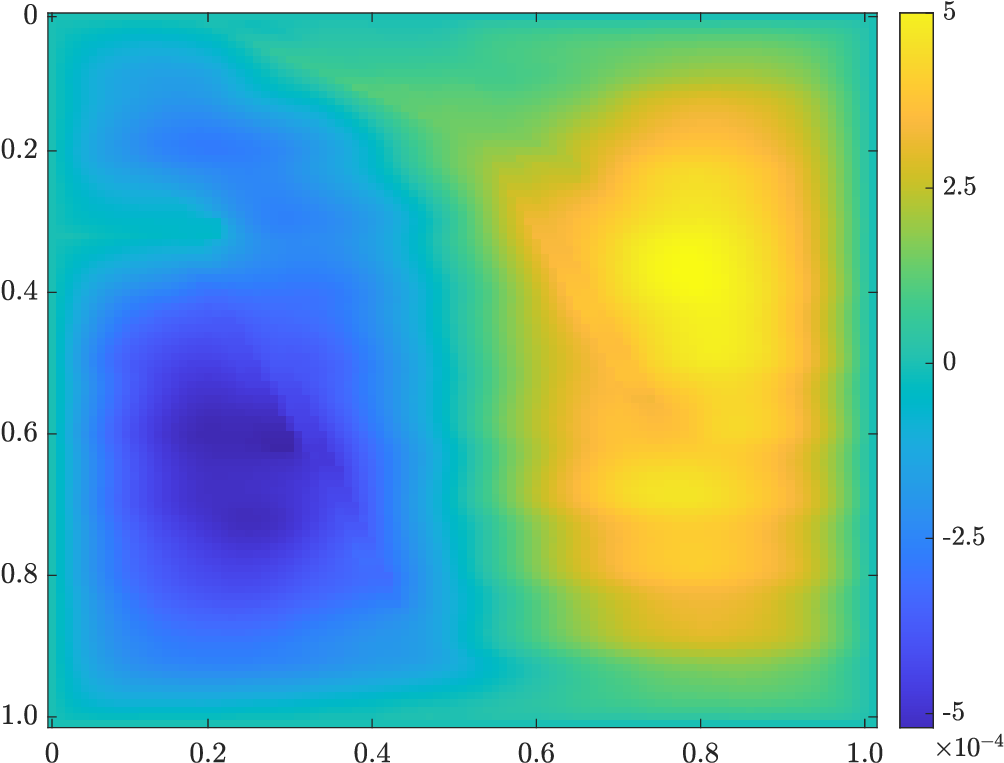}\quad
\includegraphics[width = 1.7in]{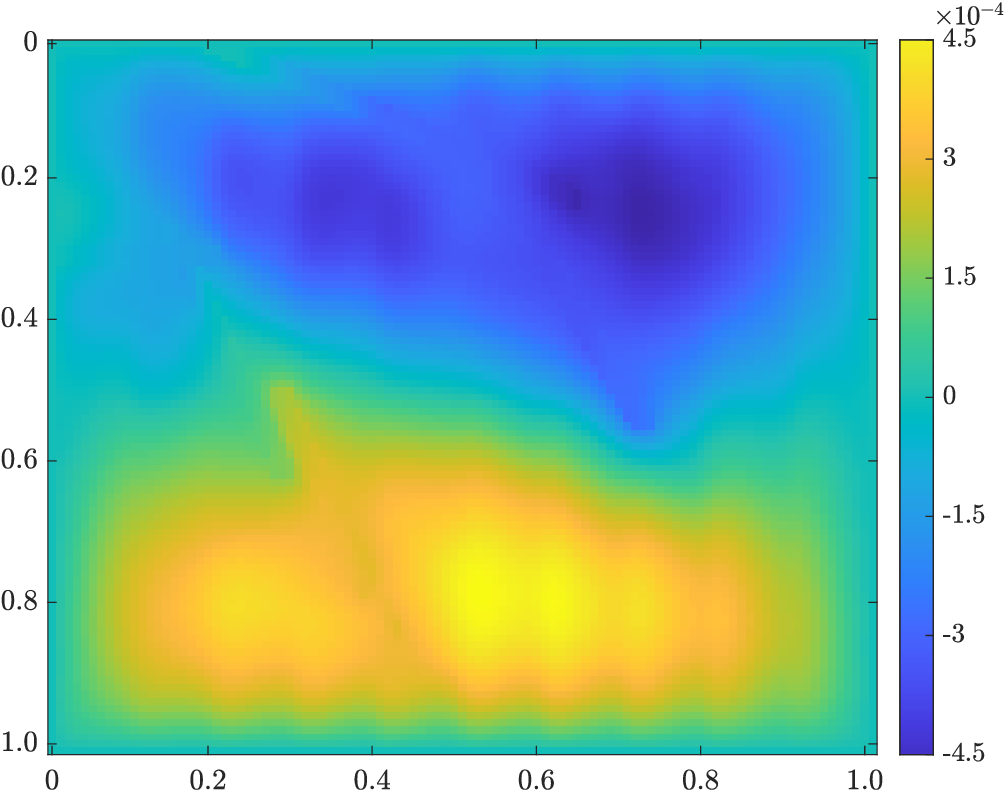}
\includegraphics[width = 1.7in]{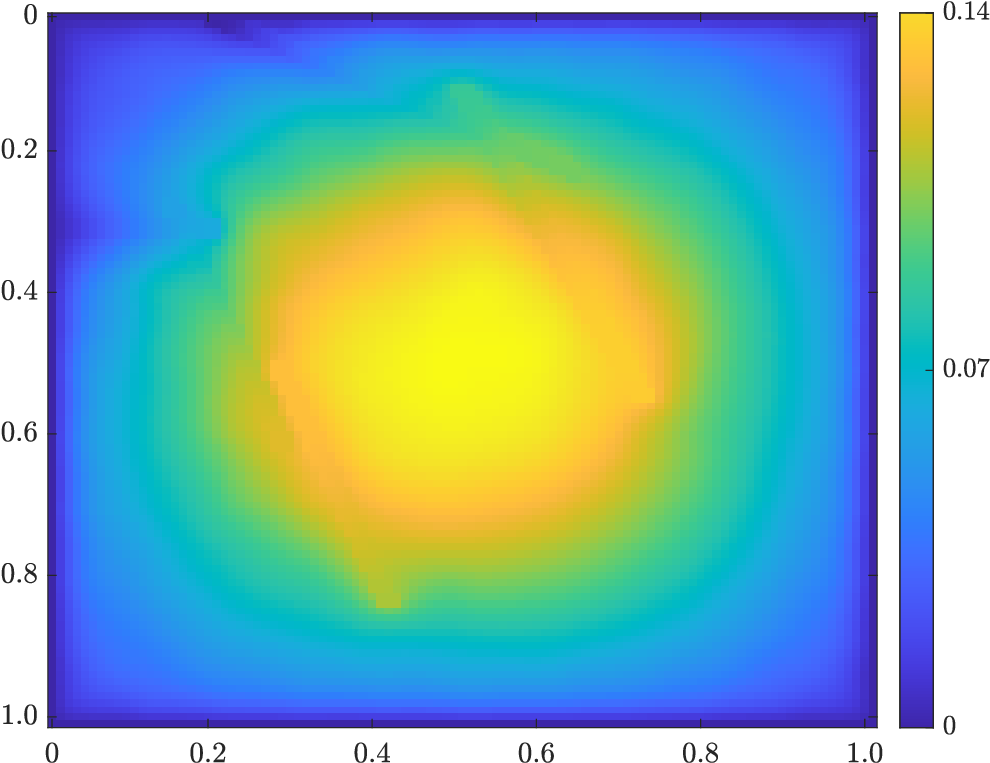}\quad
\includegraphics[width = 1.7in]{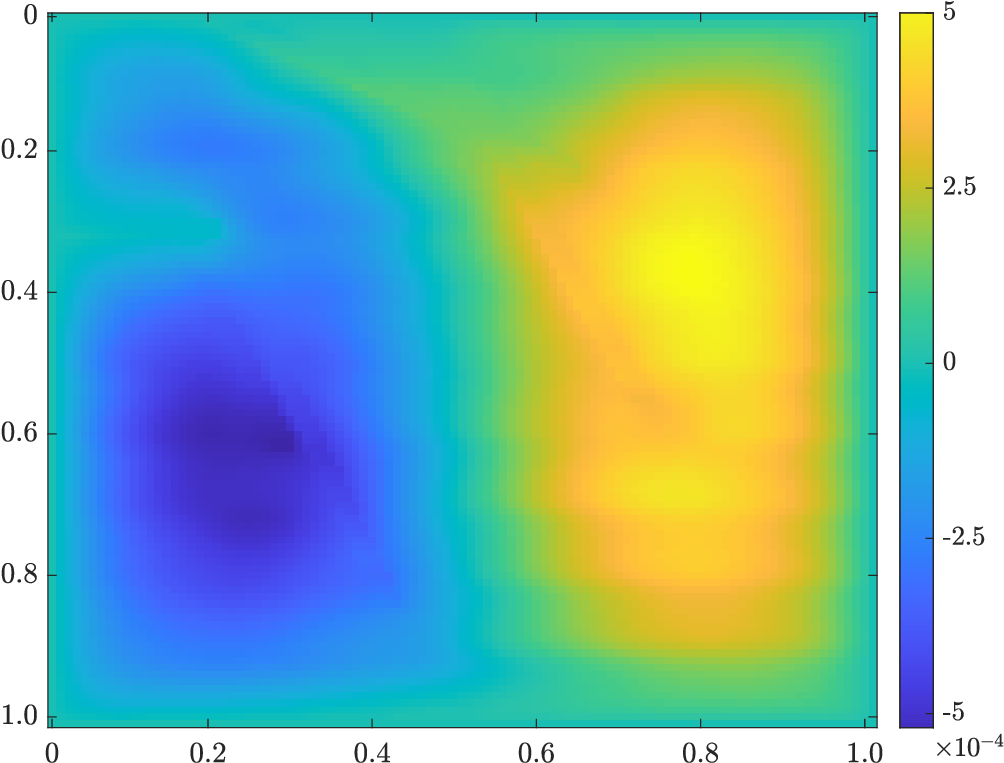}\quad
\includegraphics[width = 1.7in]{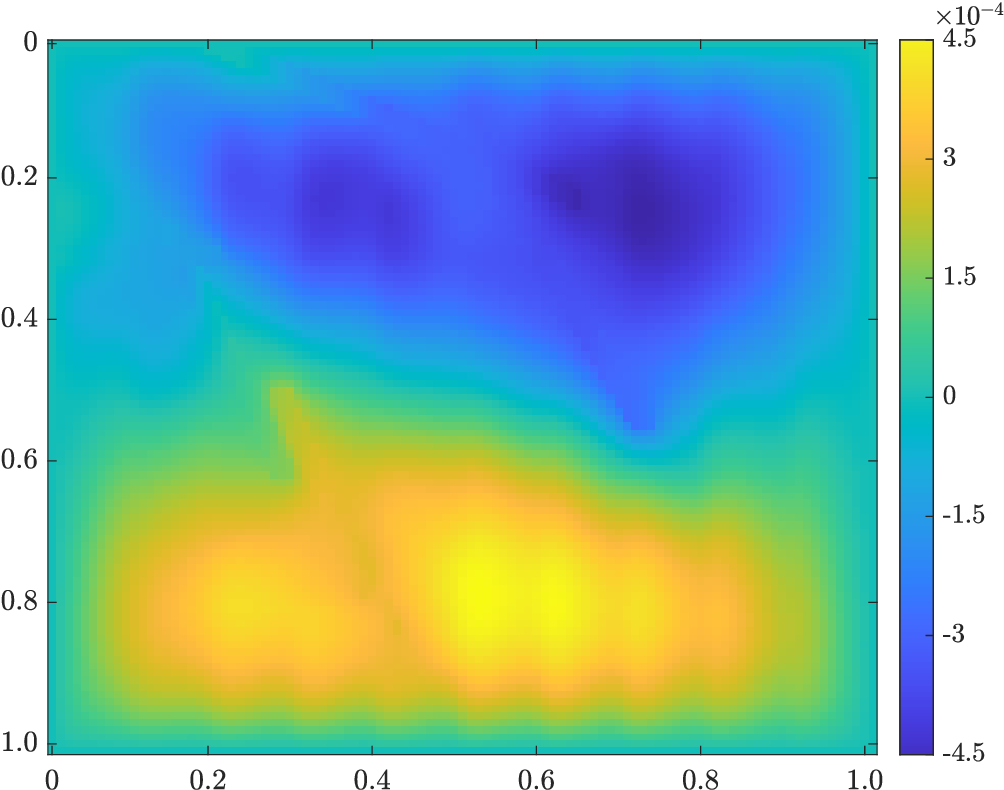}
\includegraphics[width = 1.7in]{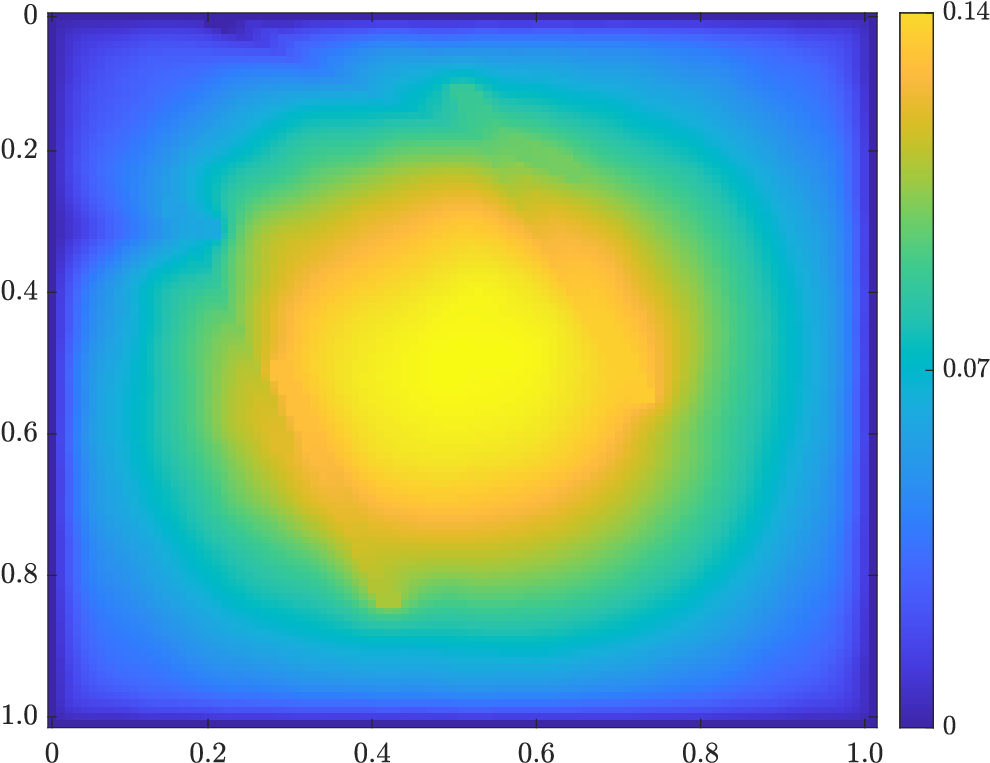}\quad
\includegraphics[width = 1.7in]{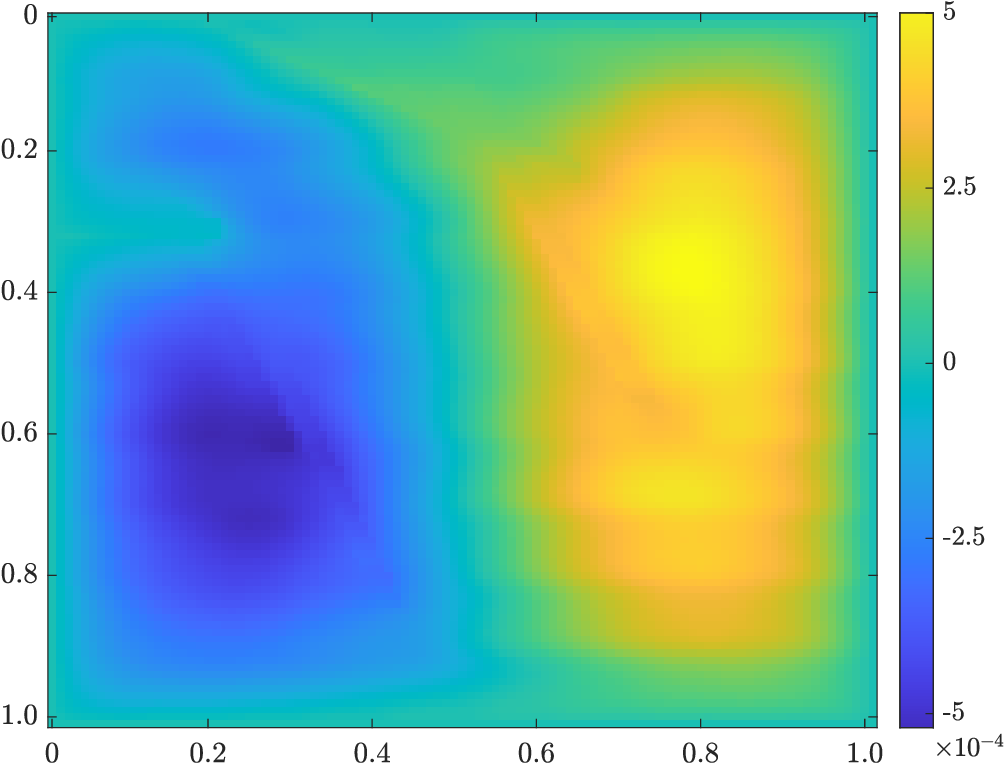}\quad
\includegraphics[width = 1.7in]{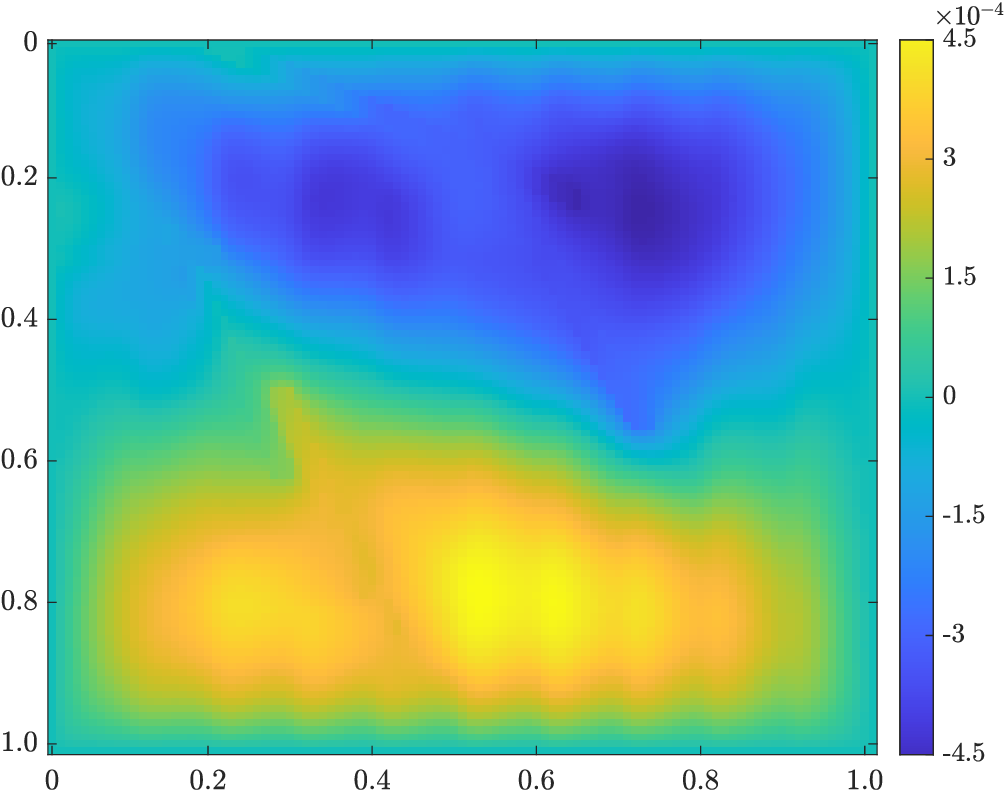}
\caption{Solution profiles for (starting from left to right) $p$, $u_1$, $u_2$ at $T=0.01$ of Example \ref{exp:1} with source function $f_1$. Top Row: Reference solutions with fully implicit fine-scale approach. Second Row: Implicit CEM-GMsFEM.  Third Row: Implicit CEM-GMsFEM with additional basis $Q_{H,2}$. Bottom Row: Proposed splitting method with additional basis $Q_{H,2}$ .}
\label{fig:5_1_soln_profile_partial}
\end{figure}

\begin{figure}[H]
\centering
\includegraphics[width=3.05 in]{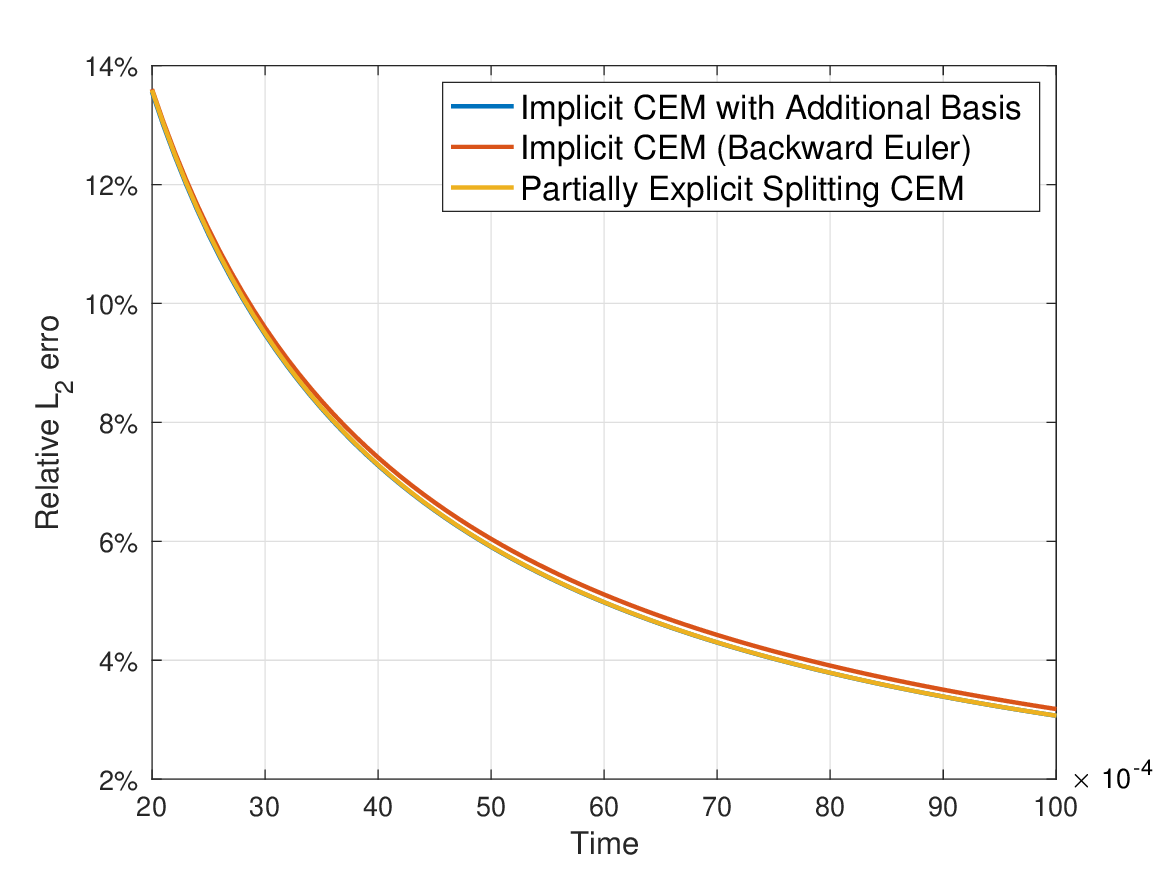} \quad
\includegraphics[width=3.05in]{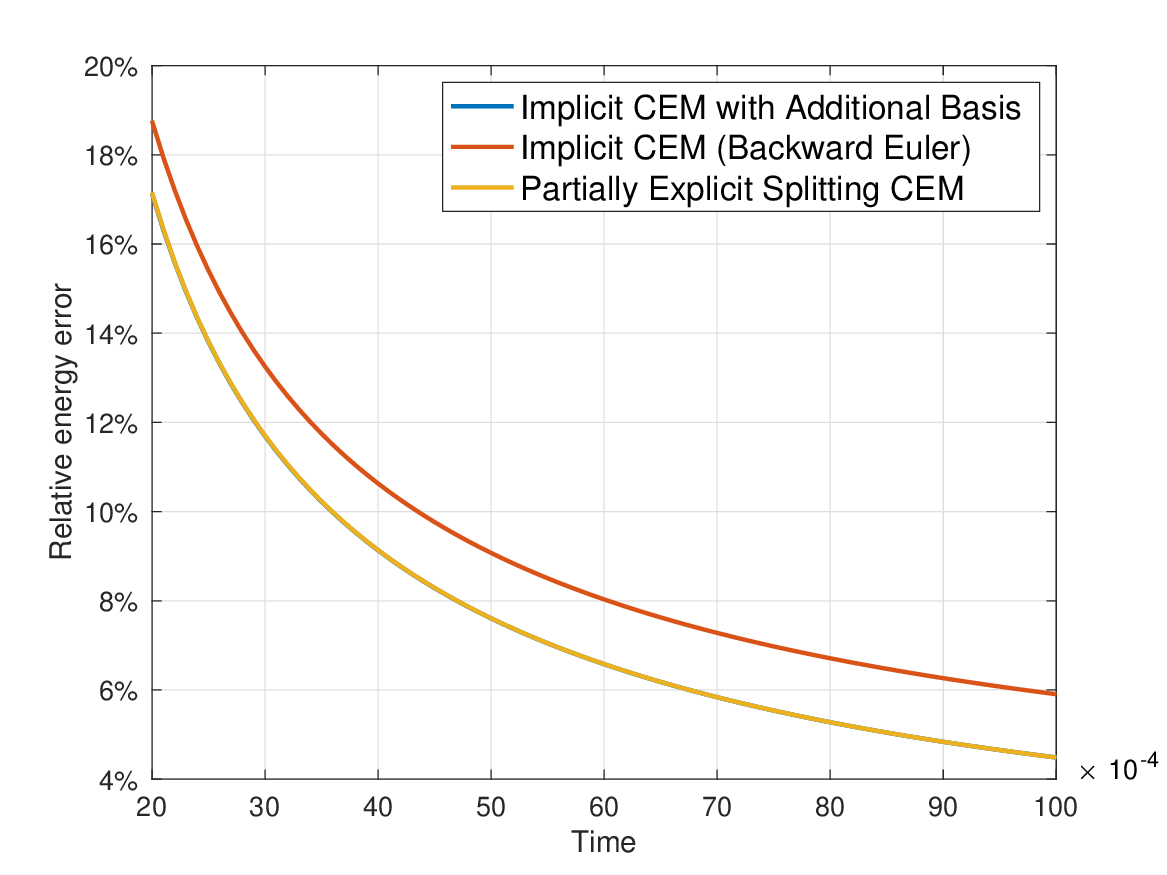}
\caption{Example \ref{exp:1} with source function $f_1$. Left: $L_2$ error against time.  Right: Energy error against time. }
\label{fig:error_partial}
\end{figure}

\begin{table}[H]
\centering
\resizebox{\textwidth}{!}{
\begin{tabular}{c||c|c|c}
Time Step $n$& Implicit CEM-GMsFEM with $Q_{H,2}$  & Implicit CEM-GMsFEM & Partially explicit method \\ 
\hline
   $ 1   $ &  $ 569.97 \% $  & $ 603.18 \% $  & $ 620.32 \% $  \cr
   $ 21   $ &  $ 15.58 \% $  & $ 17.21 \% $  & $ 15.59 \% $  \cr
   $ 41   $ &  $ 8.77 \% $  & $ 10.26 \% $  & $ 8.77\% $  \cr
   $ 61   $ &  $ 6.41 \% $  & $ 7.86 \% $  & $ 6.41 \% $  \cr
   $ 81   $ &  $ 5.18 \% $  & $ 6.61 \% $  & $ 5.18 \% $  \cr
   $ 100  $ &  $ 4.45 \% $  & $ 5.87 \% $  & $ 4.46 \% $ 
\end{tabular}
}
\caption{Energy error $e_{energy}^n$ of pressure $p$ against time step for Example \ref{exp:1} with source function $f_1$. . }
\label{exp:5_1_energy}
\end{table}

\begin{table}[H]
\centering
\resizebox{\textwidth}{!}{
\begin{tabular}{c||c|c|c}
Time Step $n$& Implicit CEM-GMsFEM with $Q_{H,2}$ & Implicit CEM-GMsFEM & Partially explicit method \\ 
\hline
   $ 1   $ &  $102.71 \% $  & $ 102.81 \% $  & $ 104.06 \% $  \cr
   $ 21   $ &  $ 12.48 \% $  & $ 12.55 \% $  & $ 12.51 \% $  \cr
   $ 41   $ &  $6.95 \% $  & $ 7.09 \% $  & $ 6.96\% $  \cr
   $ 61   $ &  $ 4.82 \% $  & $ 4.95 \% $  & $ 4.82 \% $  \cr
   $ 81  $ &  $ 3.70 \% $  & $ 3.82 \% $  & $ 3.70 \% $  \cr
   $ 100   $ &  $ 3.04 \% $  & $ 3.15 \% $  & $ 3.04 \% $
\end{tabular}
}
\caption{$L_2$ error  $e_{L_2}^n$ of pressure $p$ against time step for Example \ref{exp:1} with source function $f_1$. }
\label{exp:5_1_l2}
\end{table}

\begin{figure}[htbp!]
\centering
\includegraphics[width = 1.7in]{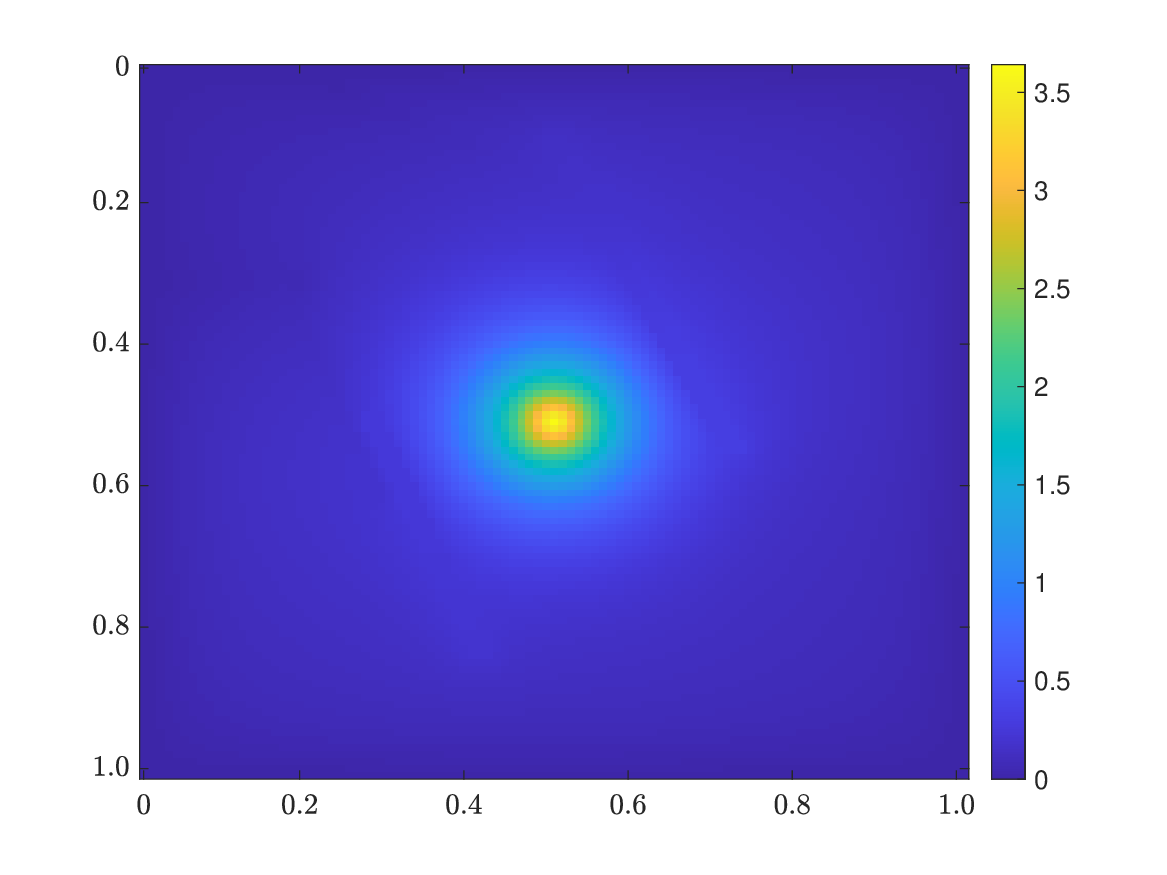} \quad
\includegraphics[width = 1.7in]{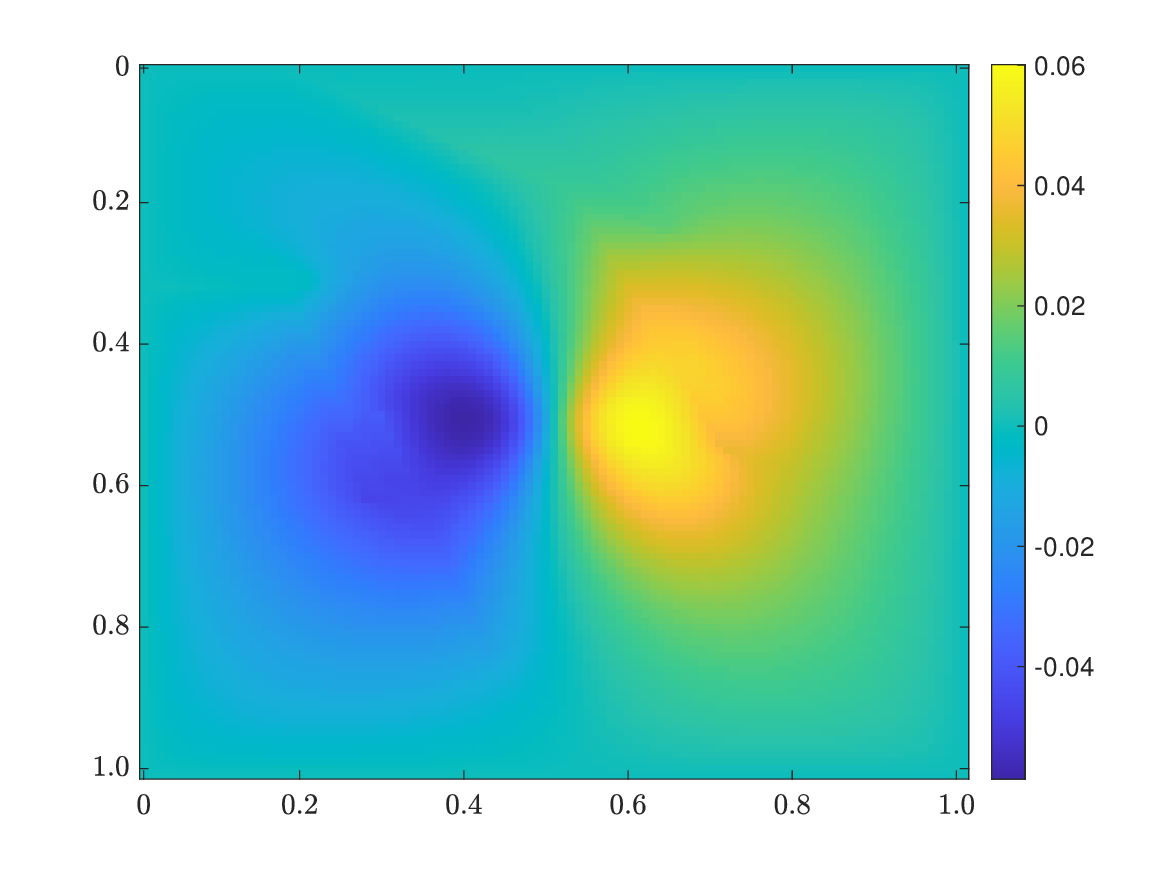}\quad
\includegraphics[width = 1.7in]{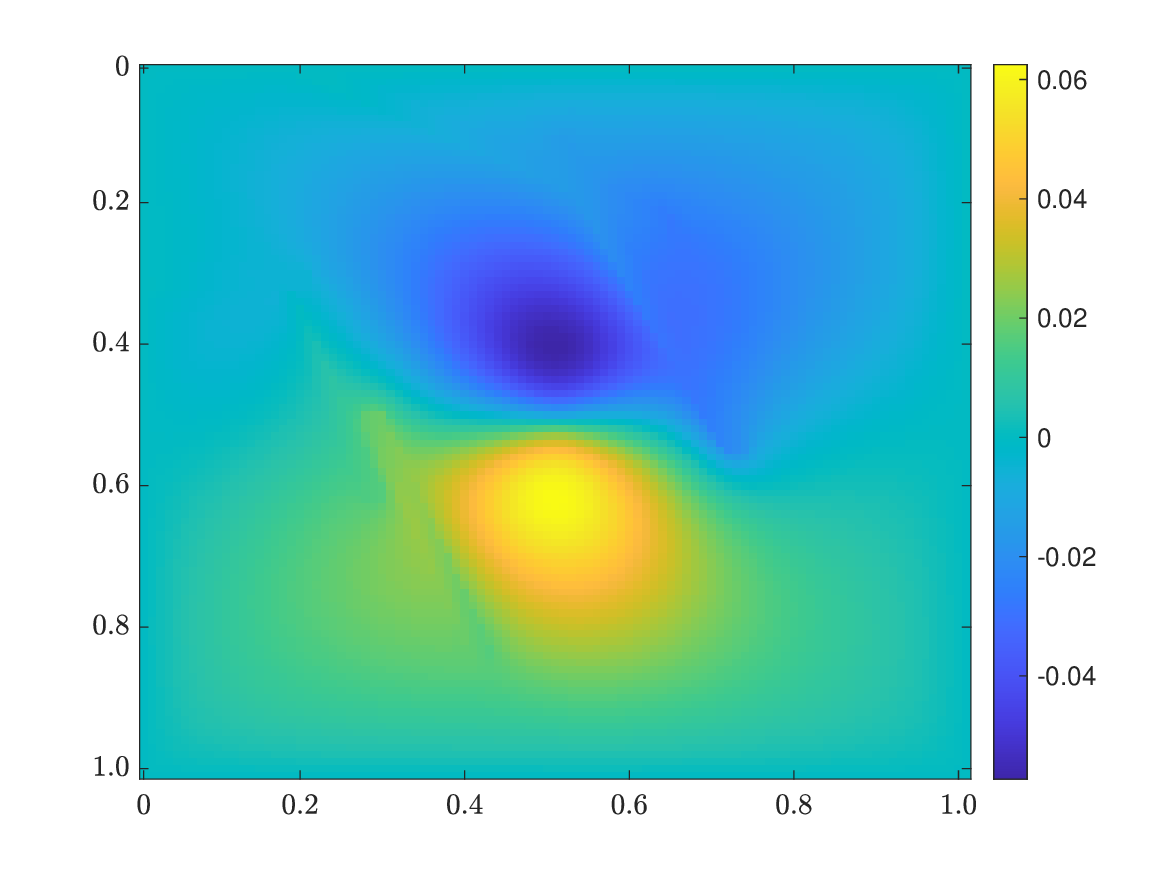}
\includegraphics[width = 1.7in]{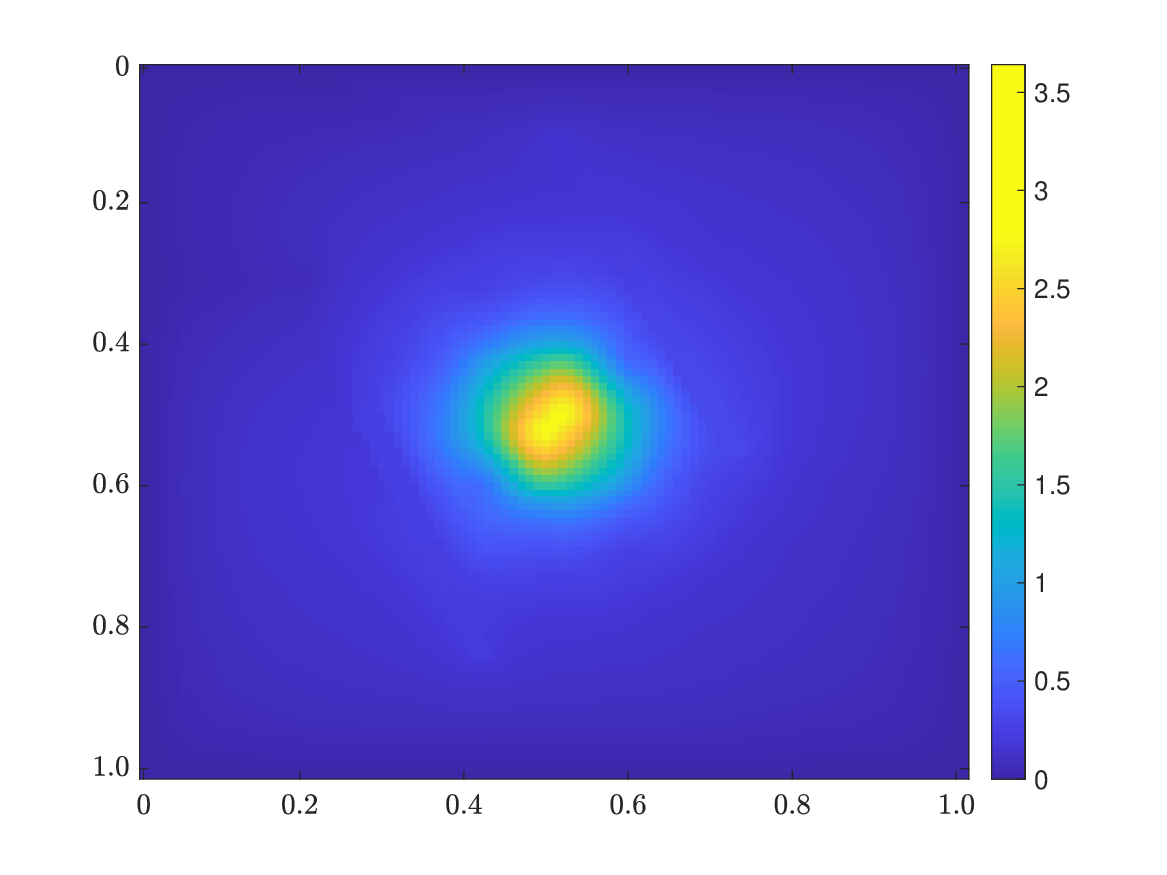}\quad
\includegraphics[width = 1.7in]{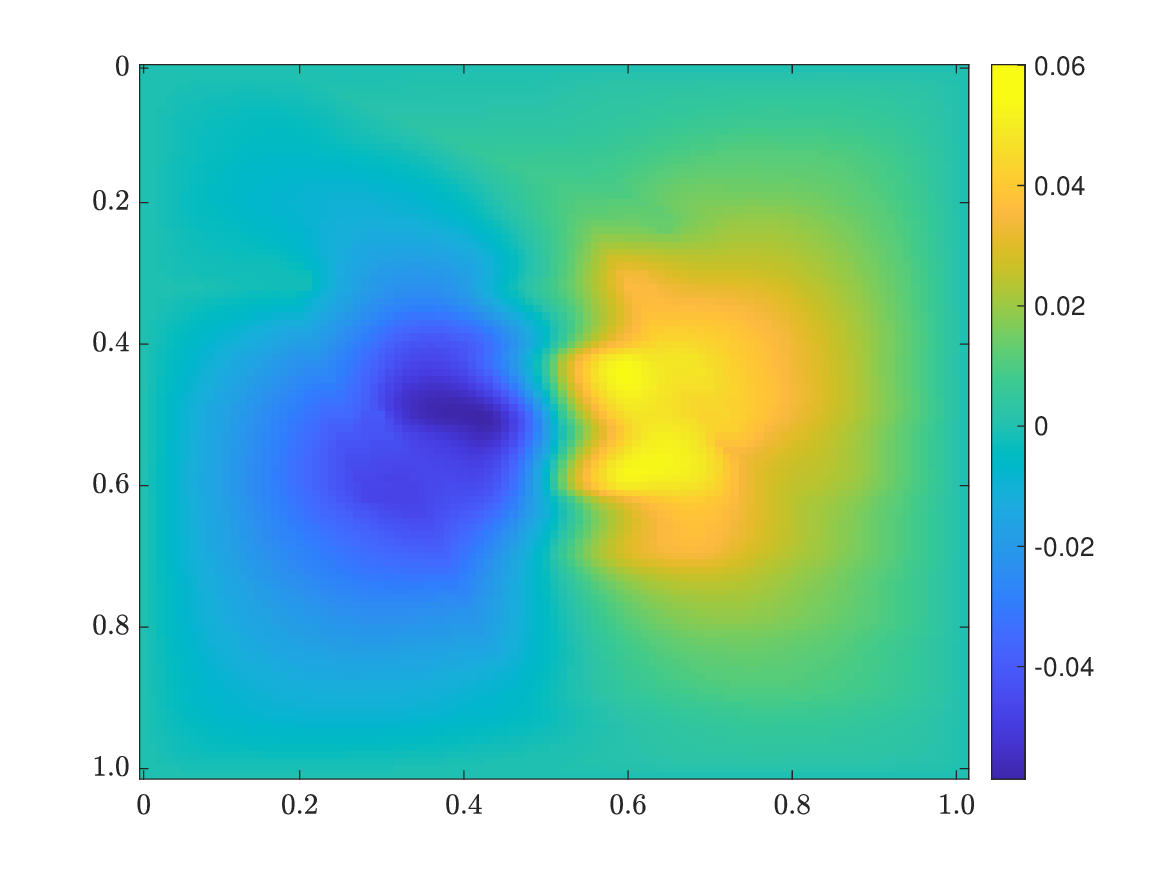}\quad
\includegraphics[width = 1.7in]{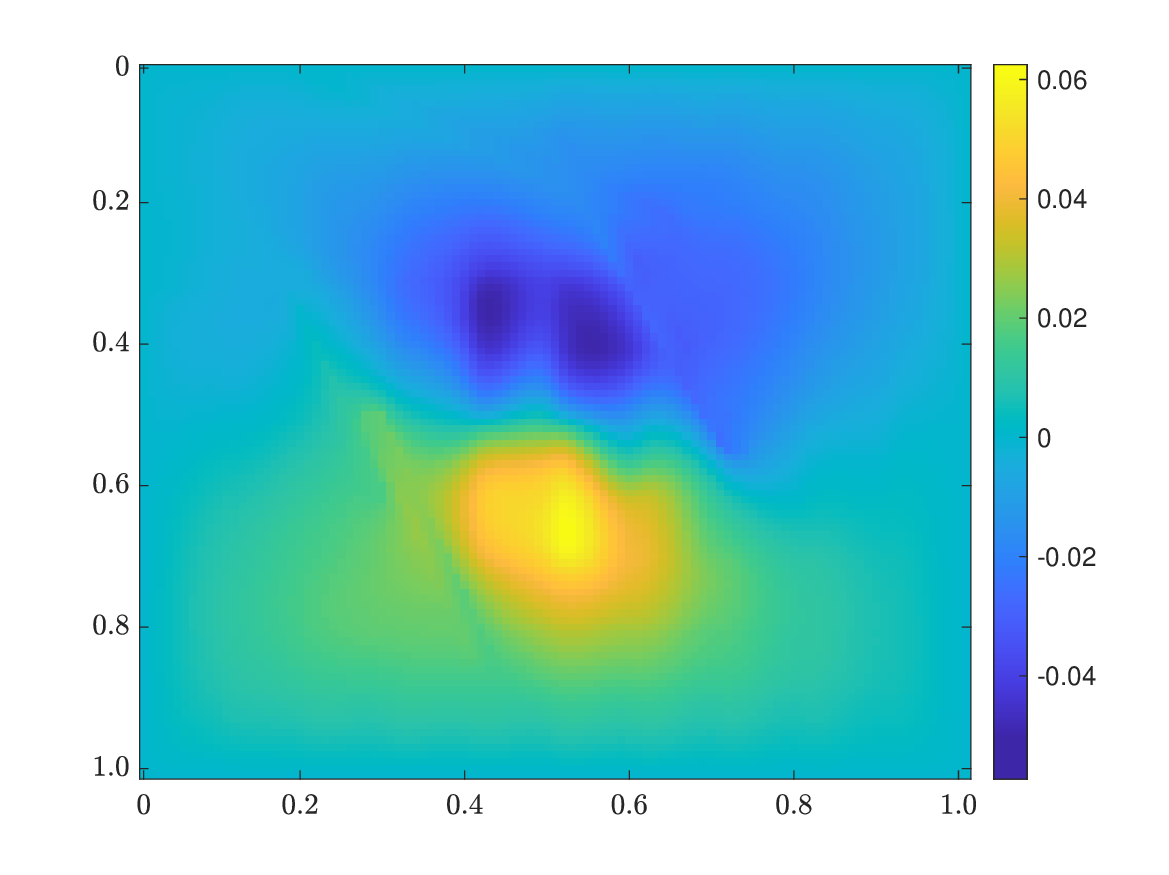}
\includegraphics[width = 1.7in]{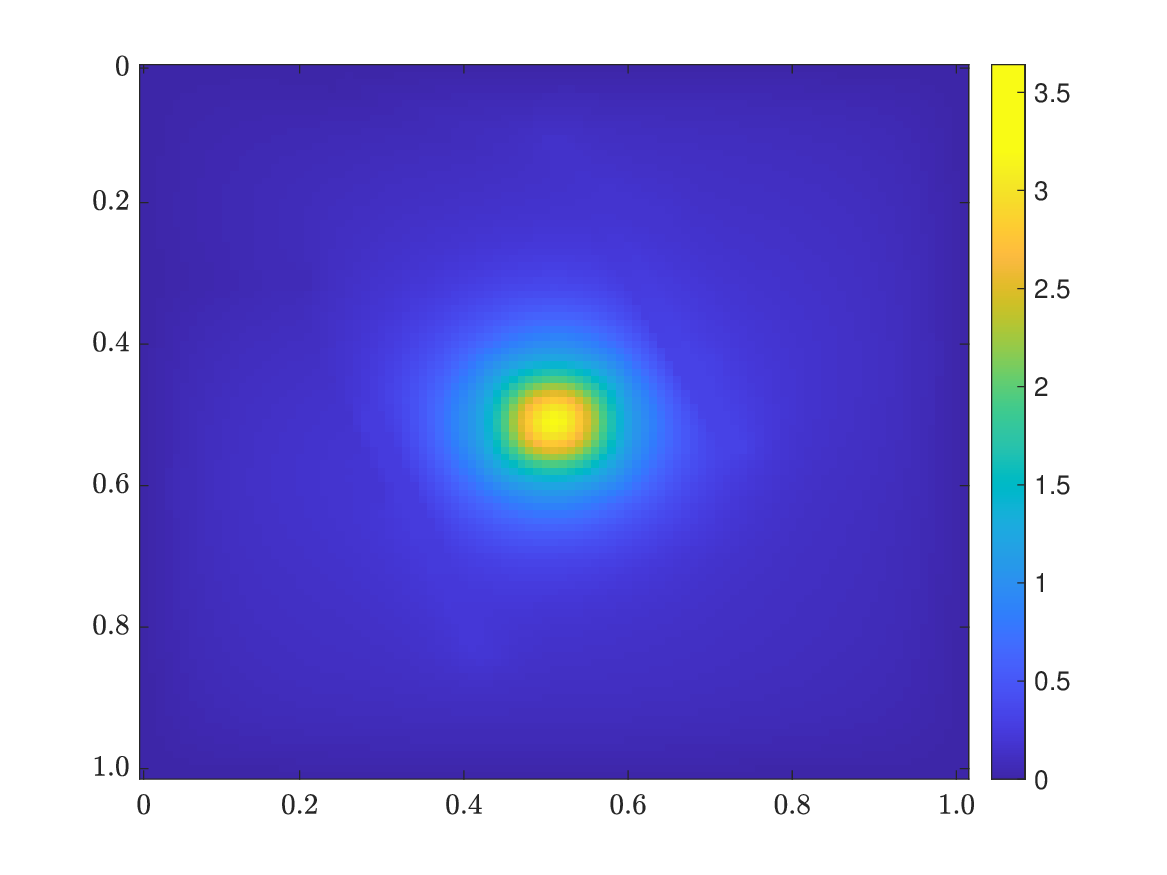}\quad
\includegraphics[width = 1.7in]{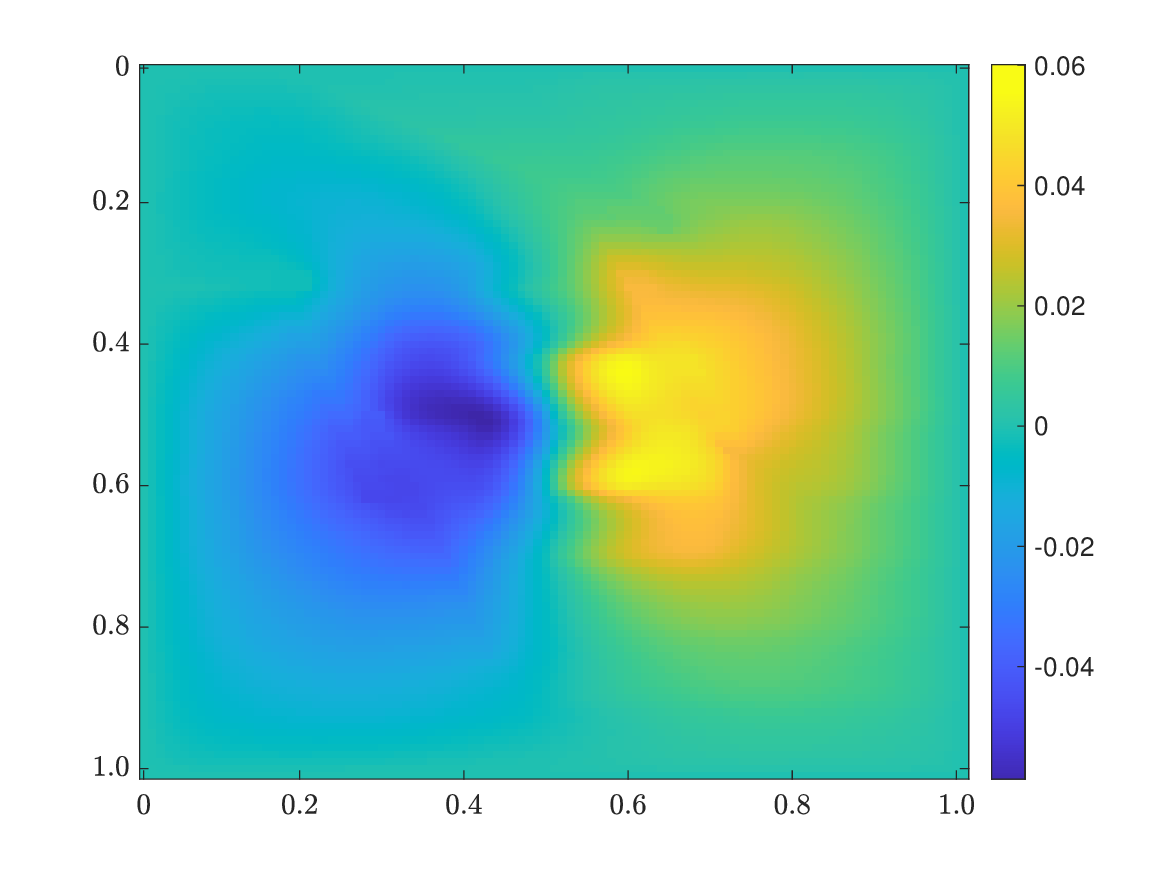}\quad
\includegraphics[width = 1.7in]{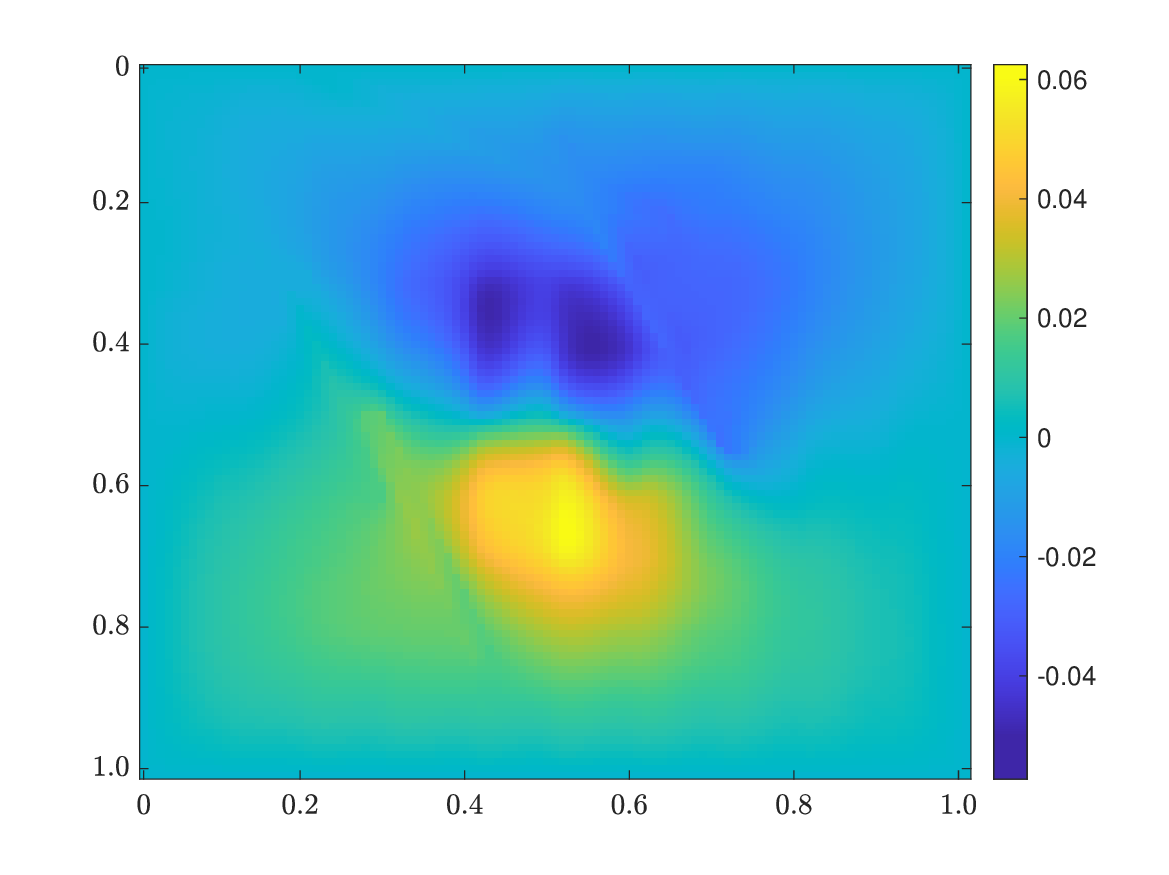}
\includegraphics[width = 1.7in]{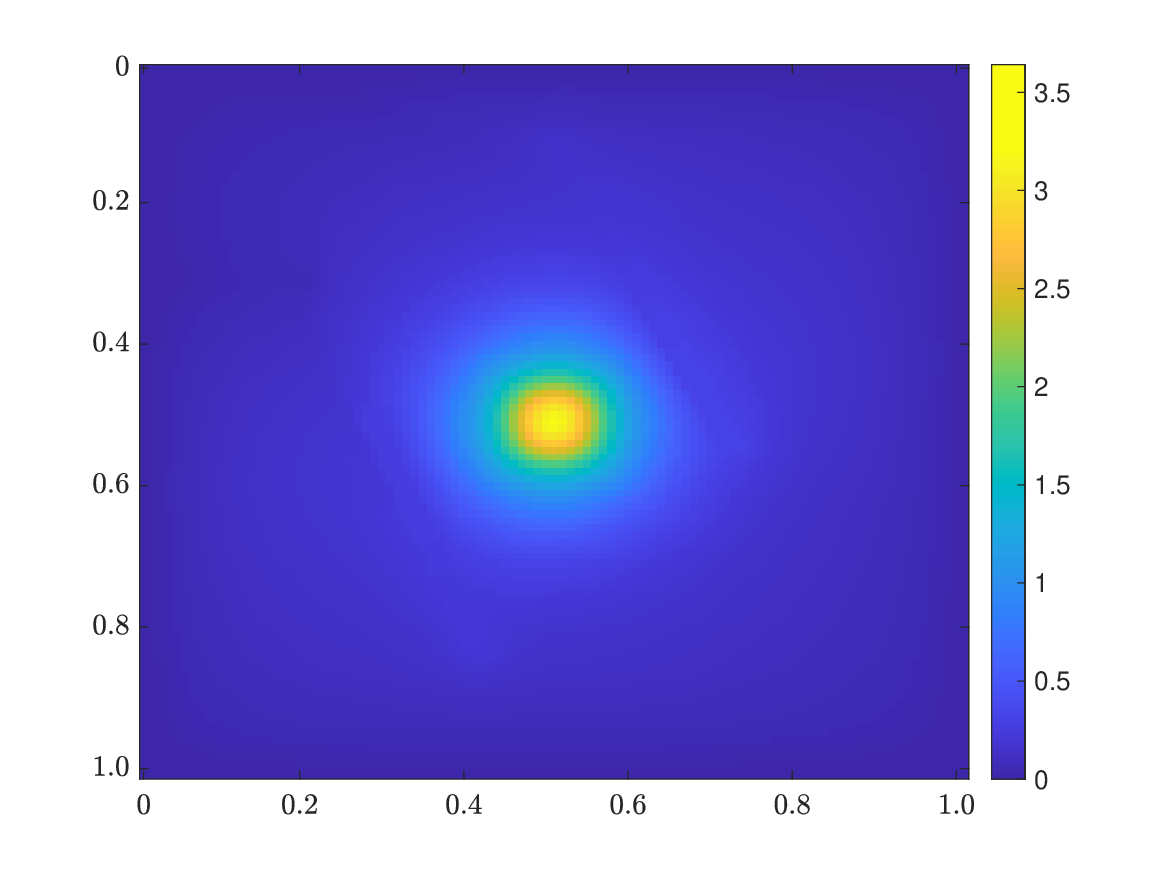}\quad
\includegraphics[width = 1.7in]{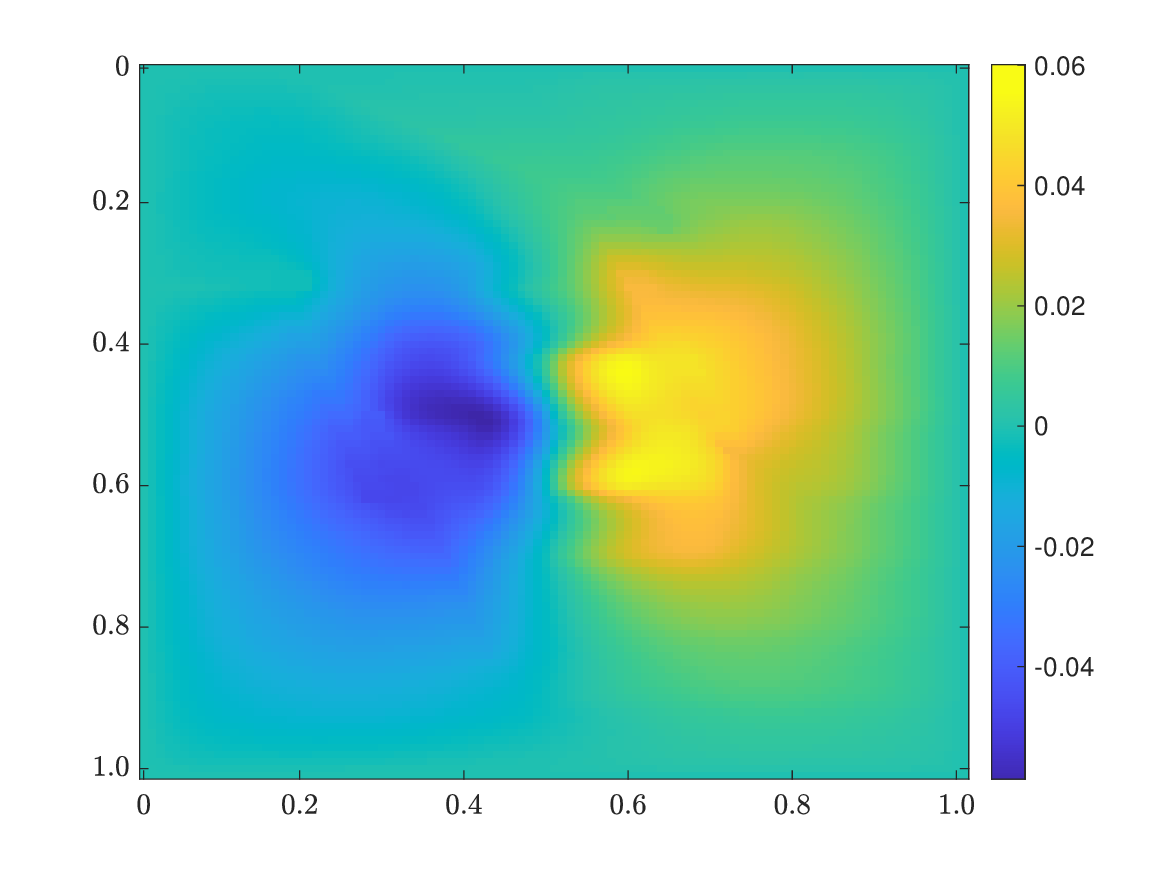}\quad
\includegraphics[width = 1.7in]{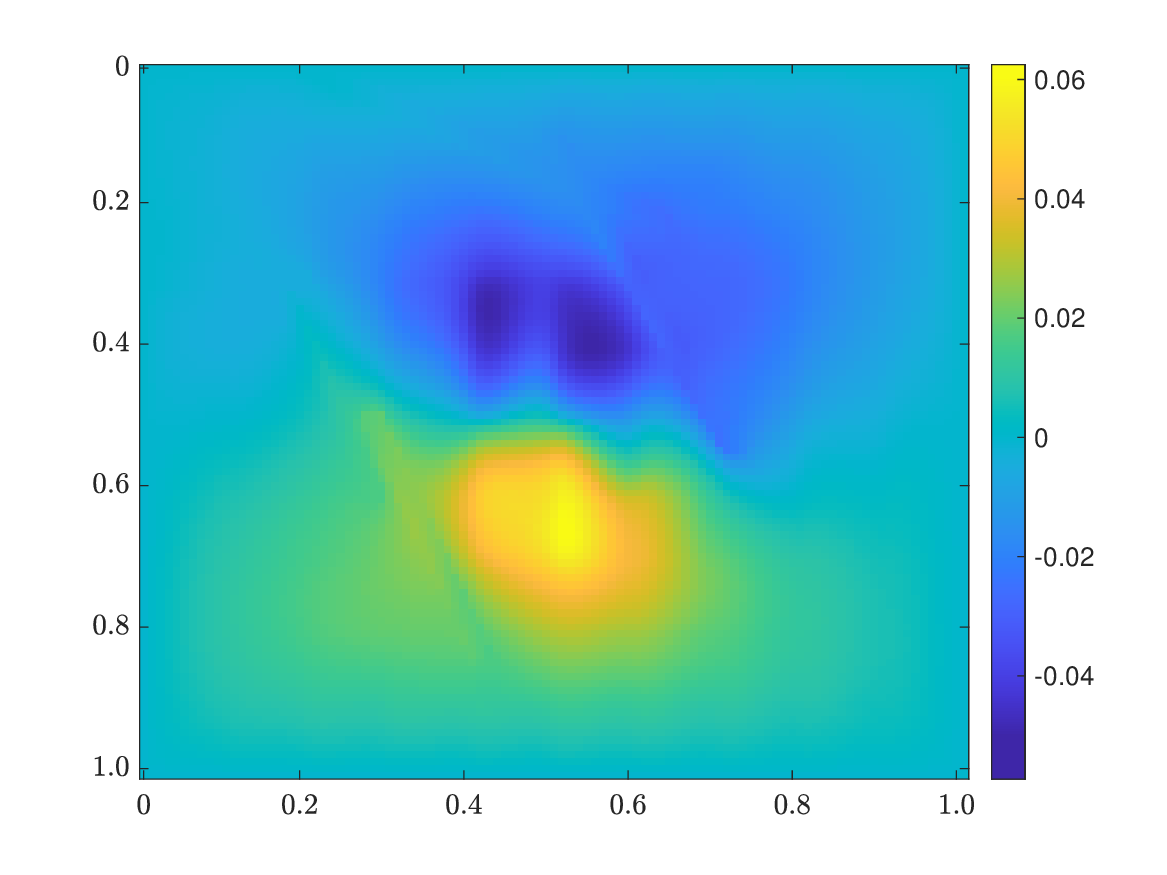}
\caption{Solution profiles for (starting from left to right) $p$, $u_1$, $u_2$ at $T=0.01$ of Example \ref{exp:1} with source function $f_2$. Top Row: Reference solutions with fully implicit fine-scale approach. Second Row: Implicit CEM-GMsFEM.  Third Row: Implicit CEM-GMsFEM with additional basis $Q_{H,2}$. Bottom Row: Proposed splitting method with additional basis $Q_{H,2}$ .}
\label{fig:5_1_0soln_profile_partial}
\end{figure}

\begin{figure}[H]
\centering
\includegraphics[width=3.05 in]{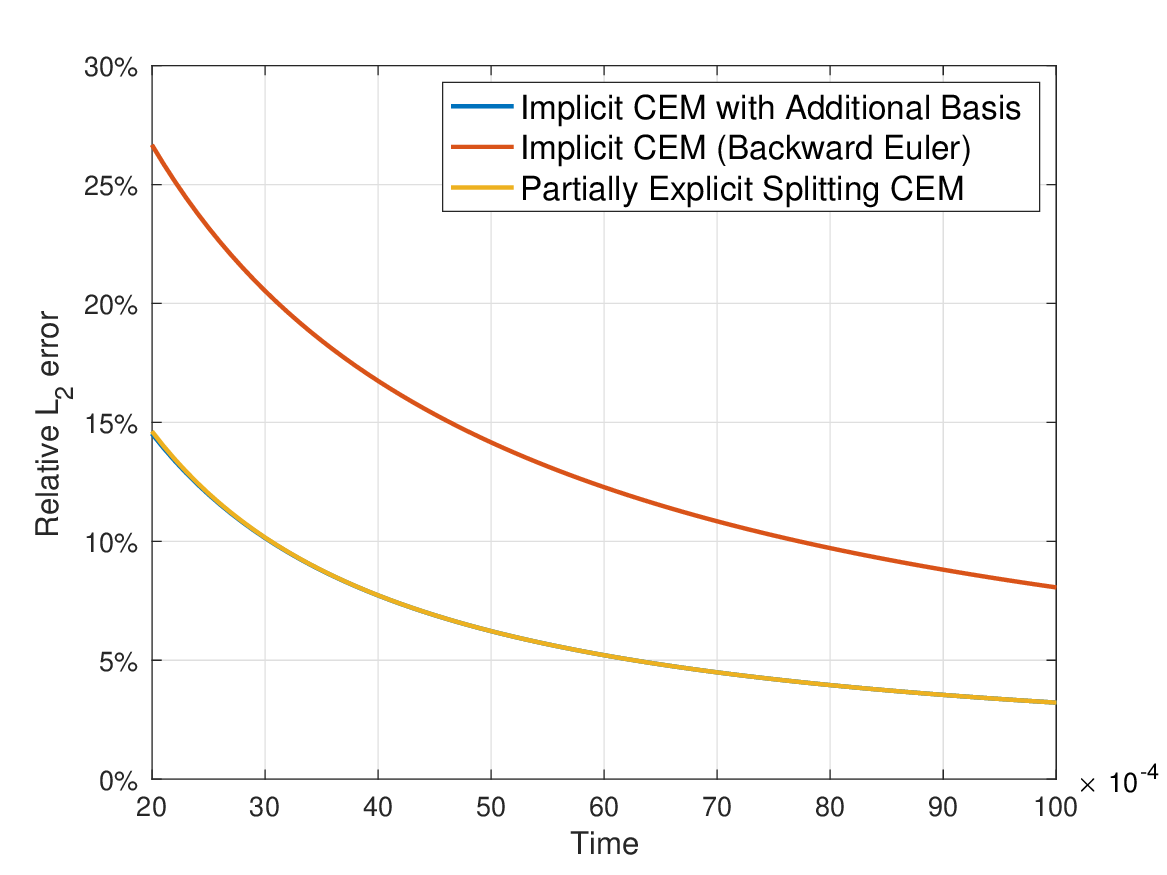} \quad
\includegraphics[width=3.05in]{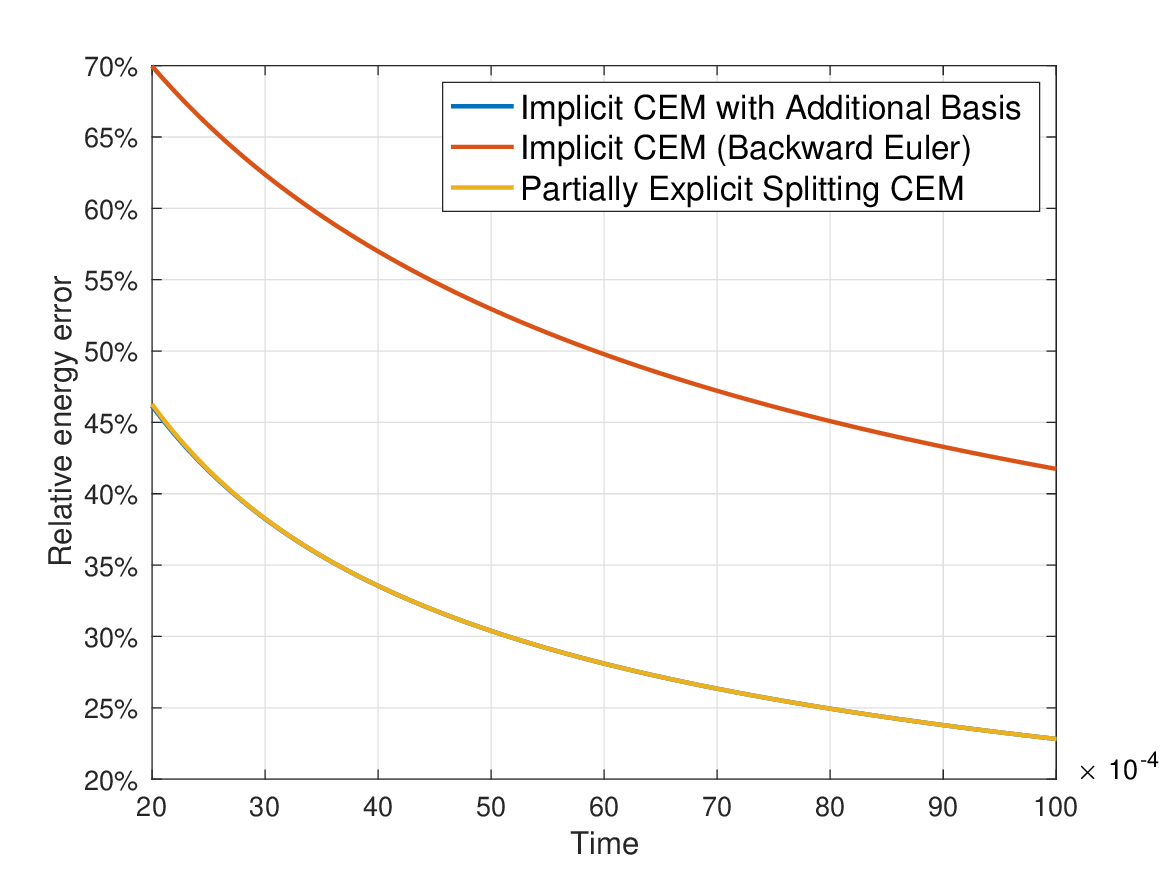}
\caption{Example \ref{exp:1} with source function $f_2$. Left: $L_2$ error against time  Right: Energy error against time}
\label{fig:error_partial_2}
\end{figure}

\begin{table}[H]
\centering
\resizebox{\textwidth}{!}{
\begin{tabular}{c||c|c|c}
Time Step $n$& Implicit CEM-GMsFEM with $Q_{H,2}$  & Implicit CEM-GMsFEM & Partially explicit method \\ 
\hline
   $ 1   $ &  $ 91.31 \% $  & $ 96.82 \% $  & $ 100.76 \% $  \cr
   $ 21   $ &  $ 44.15 \% $  & $ 68.19 \% $  & $ 44.23 \% $  \cr
   $ 41   $ &  $ 32.82 \% $  & $ 56.09 \% $  & $ 32.82\% $  \cr
   $ 61   $ &  $ 27.71 \% $  & $ 49.22 \% $  & $ 27.71 \% $  \cr
   $ 81   $ &  $ 24.69 \% $  & $ 44.70 \% $  & $ 24.69 \% $  \cr
   $ 100  $ &  $ 22.73 \% $  & $ 41.60 \% $  & $ 22.73\% $ 
\end{tabular}
}
\caption{Energy error $e_{energy}^n$ of pressure $p$ against time step for Example \ref{exp:1} with source function $f_2$. }
\label{exp:5_1_1energy}
\end{table}

\begin{table}[H]
\centering
\resizebox{\textwidth}{!}{
\begin{tabular}{c||c|c|c}
Time Step $n$& Implicit CEM-GMsFEM with $Q_{H,2}$ & Implicit CEM-GMsFEM & Partially explicit method \\ 
\hline
   $ 1   $ &  $53.73 \% $  & $ 64.84 \% $  & $ 70.03 \% $  \cr
   $ 21   $ &  $ 13.39 \% $  & $ 25.14 \% $  & $ 13.46 \% $  \cr
   $ 41   $ &  $7.37 \% $  & $ 16.15 \% $  & $ 7.37\% $  \cr
   $ 61   $ &  $ 5.05 \% $  & $ 11.96 \% $  & $ 5.05 \% $  \cr
   $ 81  $ &  $ 3.86 \% $  & $ 9.52 \% $  & $ 3.86 \% $  \cr
   $ 100   $ &  $ 3.19 \% $  & $ 7.99 \% $  & $ 3.19 \% $
\end{tabular}
}
\caption{$L_2$ error  $e_{L_2}^n$ of pressure $p$ against time step for Example \ref{exp:1} with source function $f_2$. }
\label{exp:5_1_1l2}
\end{table}

For the next two examples, we will employ the same format of tables and figures as in Example \ref{exp:1} to present  numerical results.
\begin{example}\label{chap3_exp:2}
The problem setting in this example reads as follows: 
\begin{enumerate}
\item We set $\Omega = (0,1)^2$, $\alpha = 0.9$, $M = 1$, $\nu_p = 0.2$ (Poisson ratio), and $\nu = 1$. 
\item The terminal time is $T = 1/100$. The time step size is $\tau = 10^{-4}$, i.e., $N = 100$. 
\item The Young's modulus $E$ is depicted in Figure \ref{fig:ym_exp3}. We set the permeability to be $\kappa = E$. The permeability field is heterogeneous with high contrast streaks. 
\item The source function is $f(x_1,x_2) \equiv 100 \exp(-800((x_1-0.5)^2+(x_2-0.5)^2))$; the initial condition is $p(x_1, x_2) = 100x_1^2(1-x_1)x_2^2(1-x_2)$. 
\item The number of (local) offline (pressure and displacement) basis functions is $J = J_i^1 = J_i^2 = 2$ and the number of oversampling layers is $\ell = 2$. 
\item The number of (local) $Q_{H,2}$ basis function is $J_i=2$.
\item The coarse mesh is $10 \times 10$ and the overall fine mesh is $100 \times 100$. 
\end{enumerate}
\end{example}

\begin{figure}[htbp!]
\centering
\includegraphics[width = 3 in]{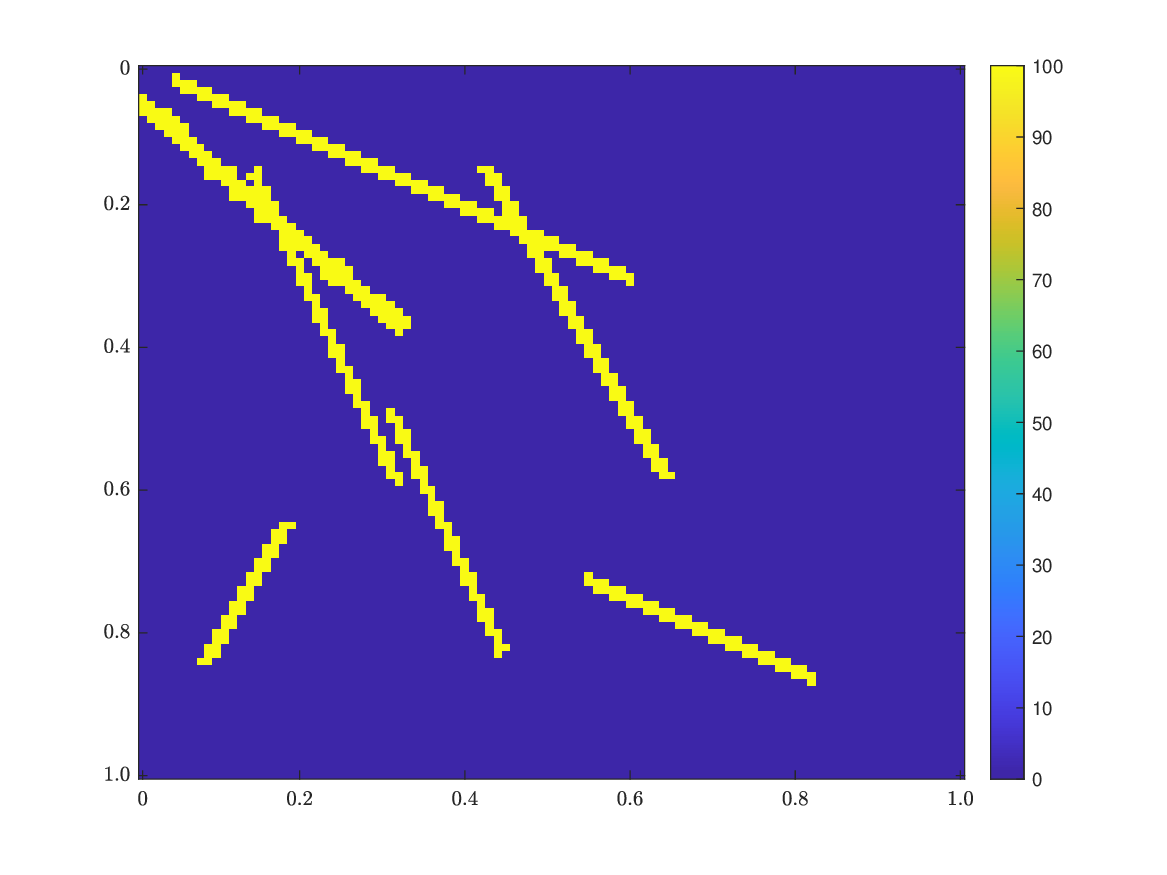}\quad \includegraphics[width = 3 in]{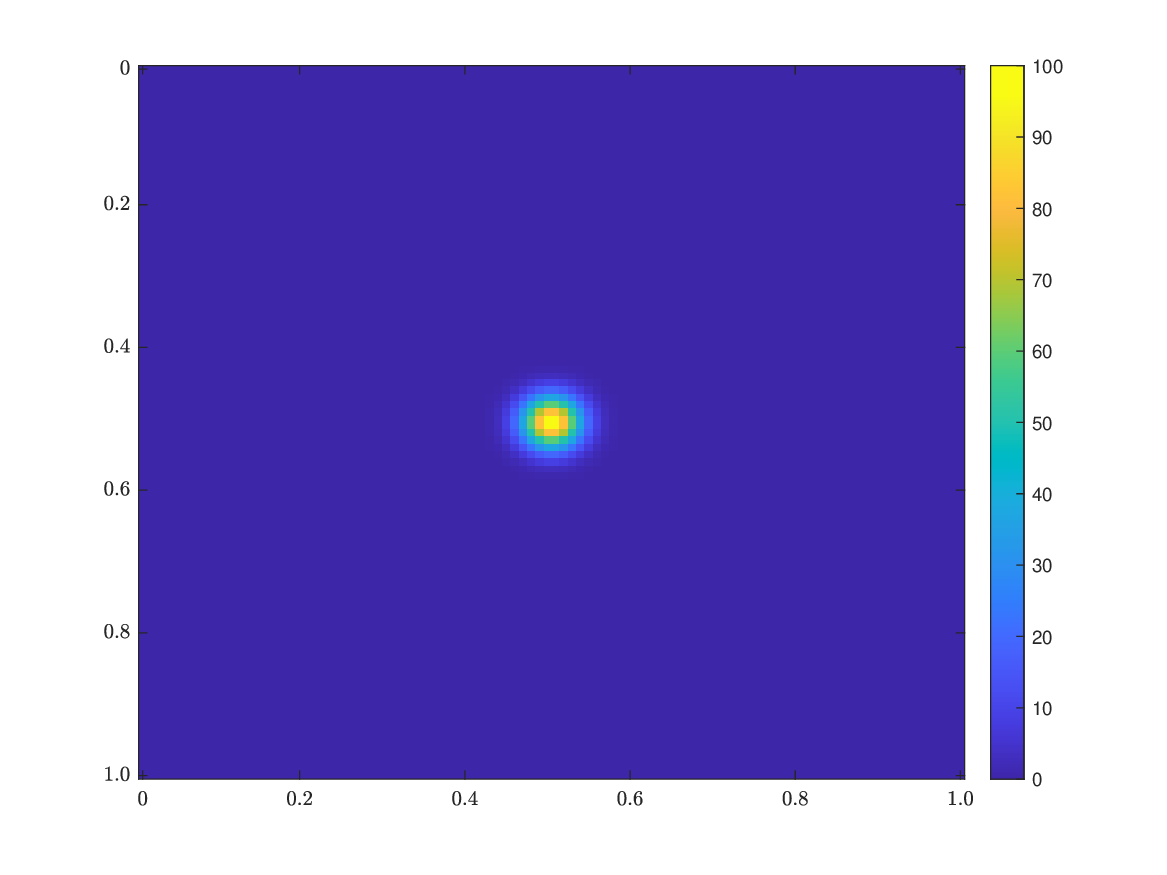}
\caption{Left: Young's Modulus for Example \ref{chap3_exp:2}. Right: Source term $f$} 
\label{fig:ym_exp3}
\end{figure}

\begin{figure}[htbp!]
\centering
\includegraphics[width = 1.7in]{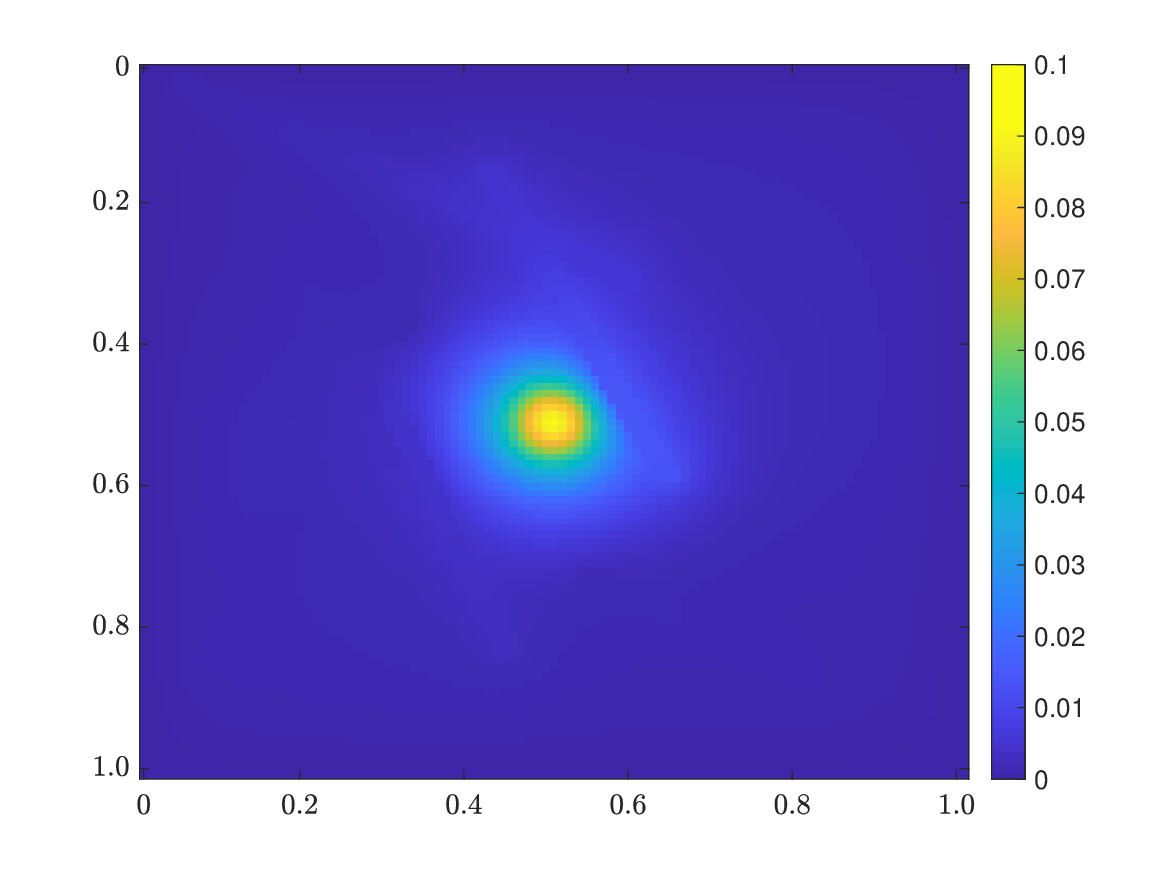} \quad
\includegraphics[width = 1.7in]{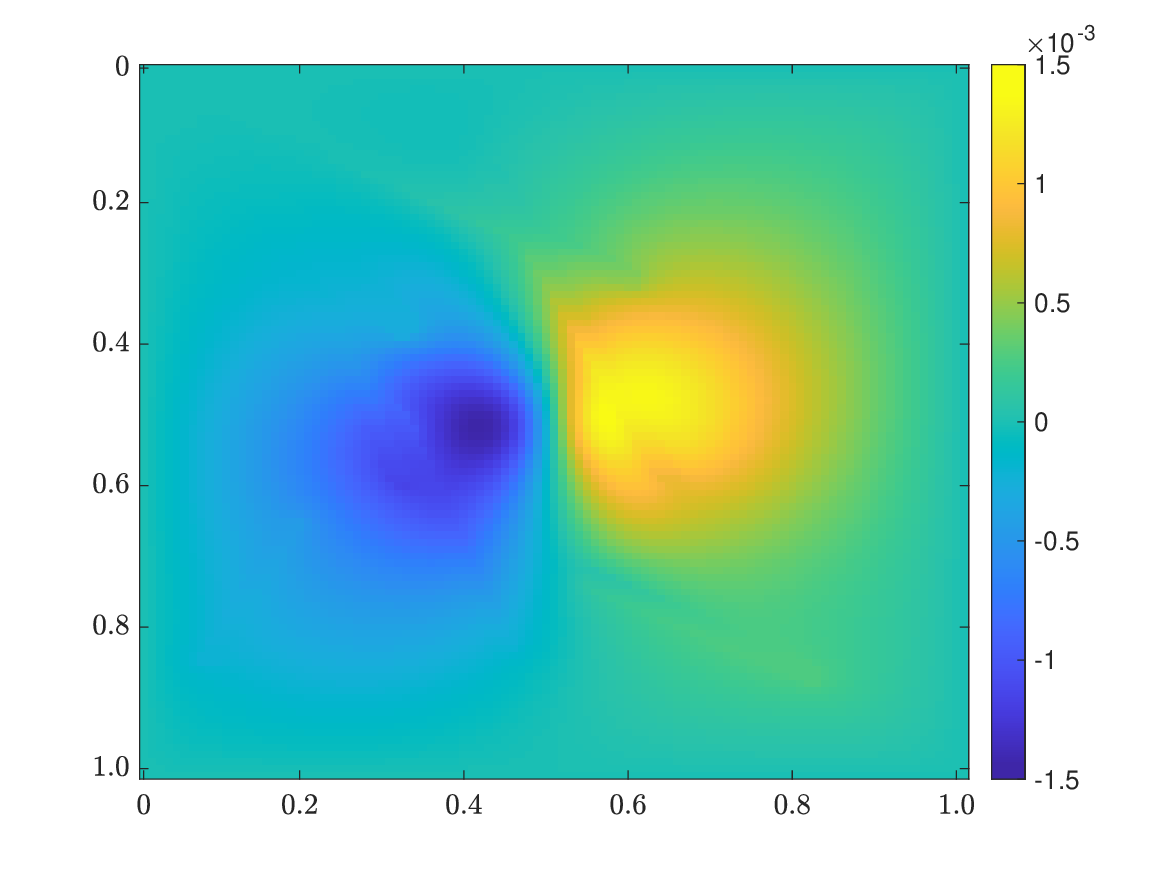}\quad
\includegraphics[width = 1.7in]{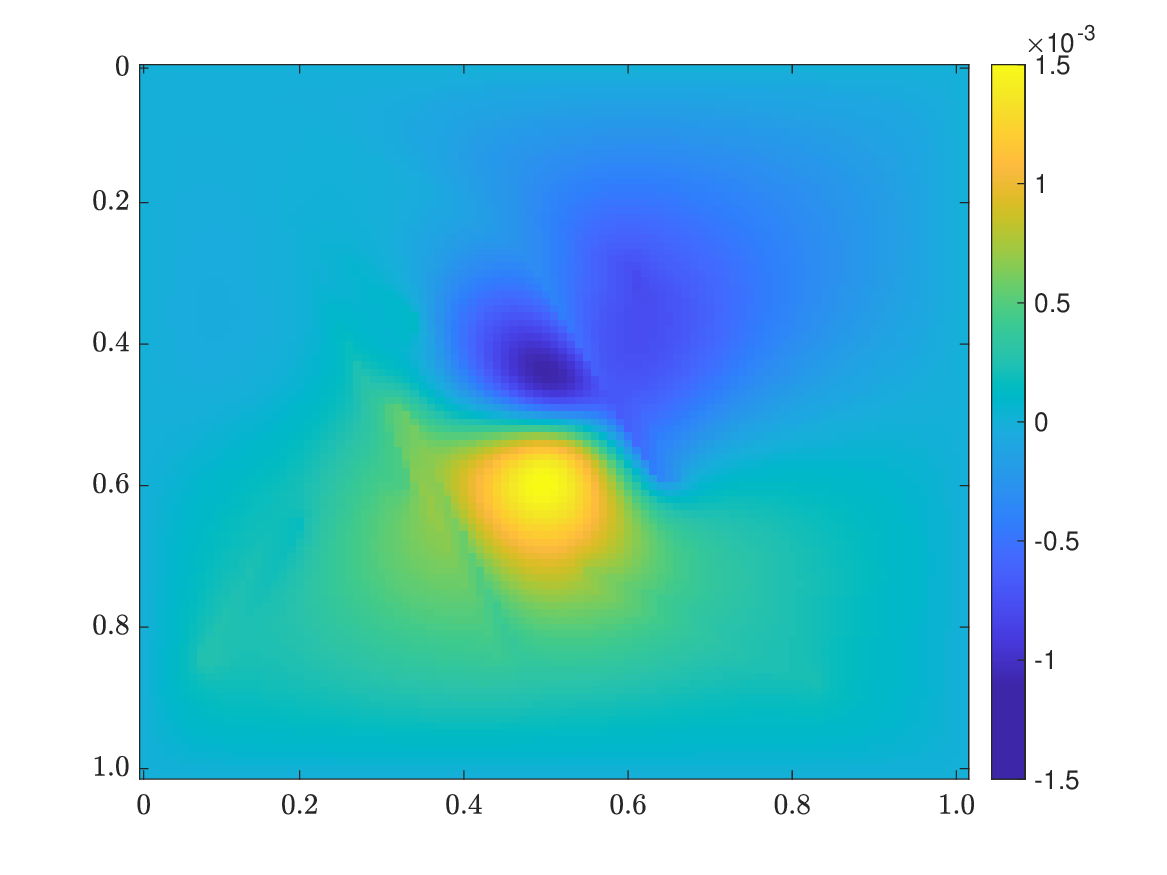}
\includegraphics[width = 1.7in]{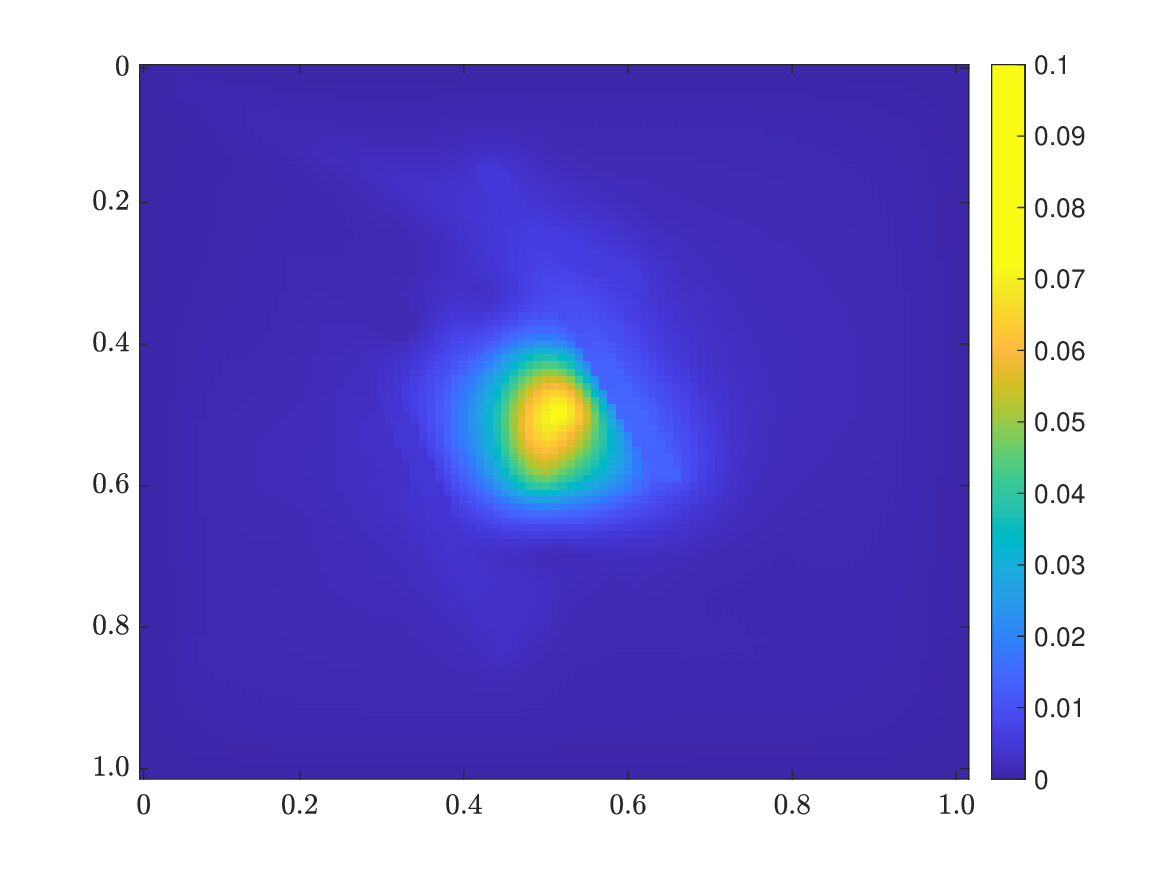}\quad
\includegraphics[width = 1.7in]{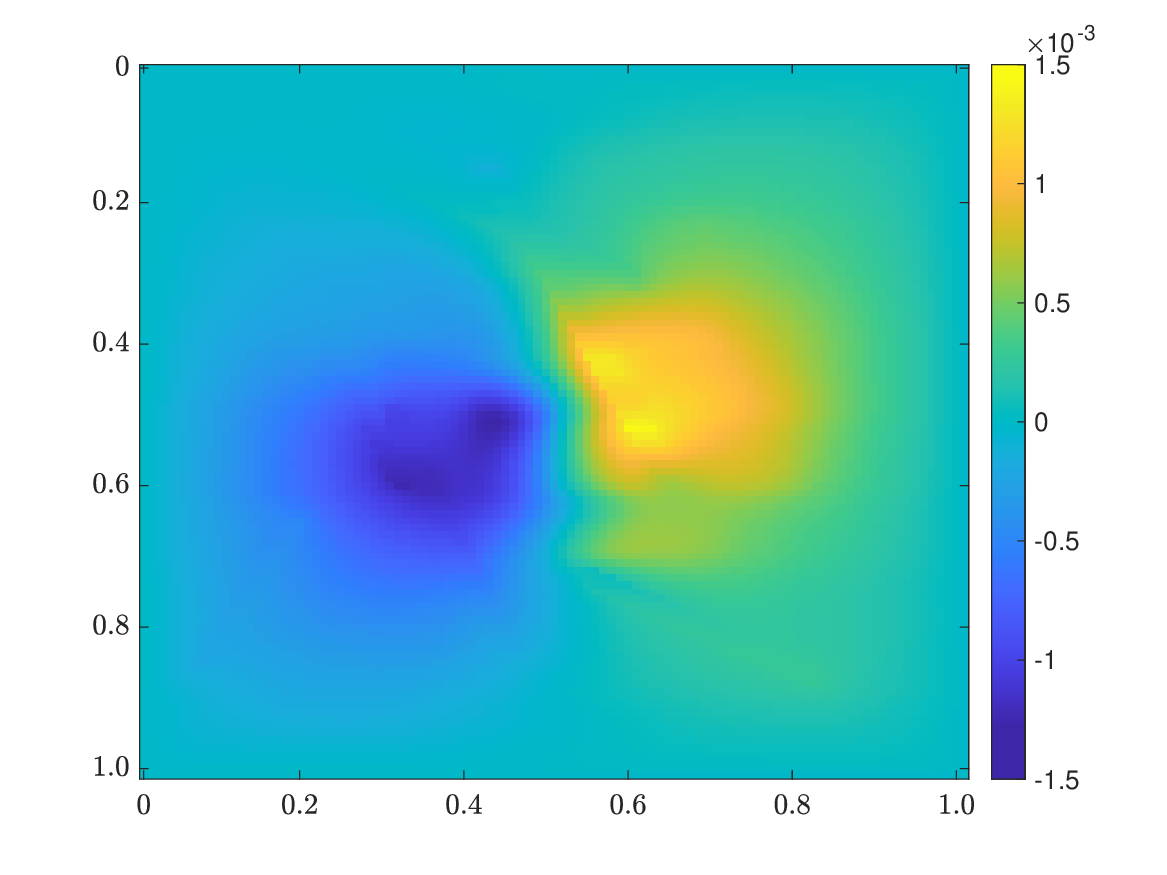}\quad
\includegraphics[width = 1.7in]{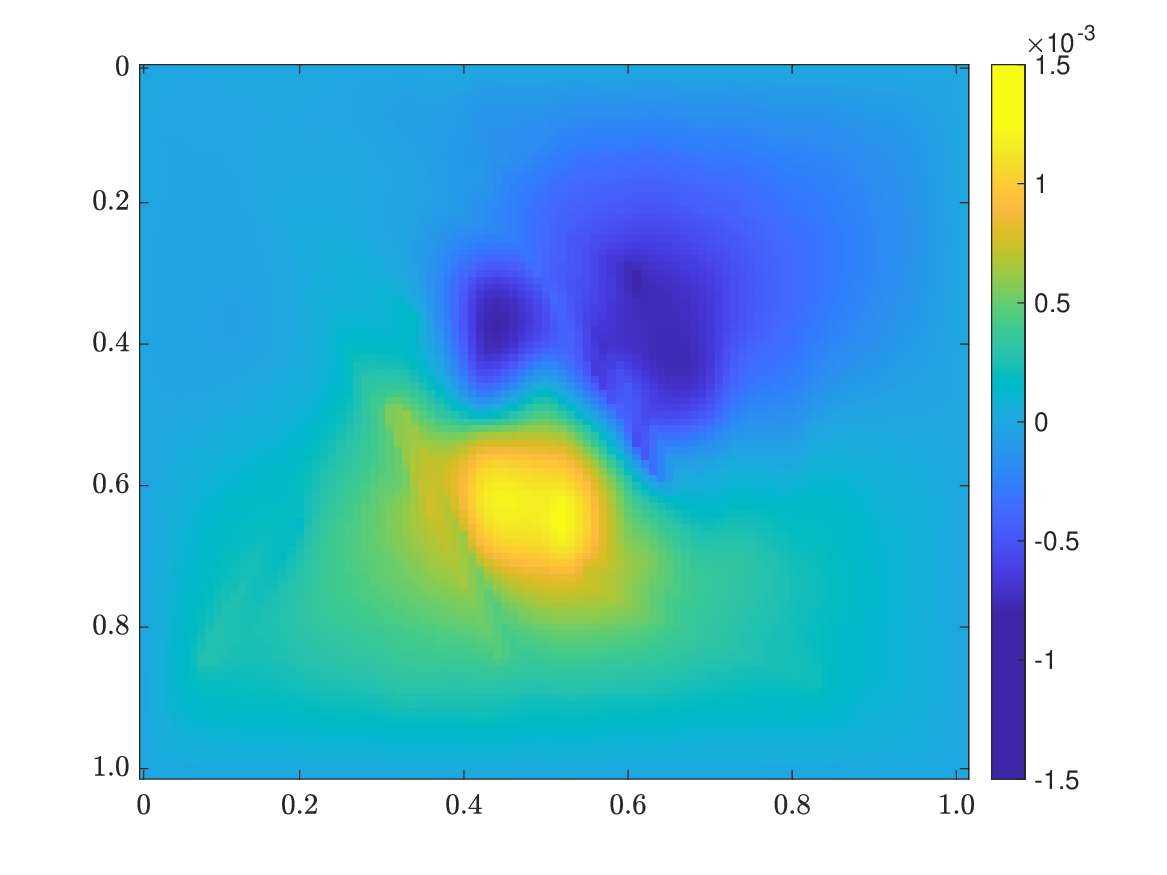}

\includegraphics[width = 1.7in]{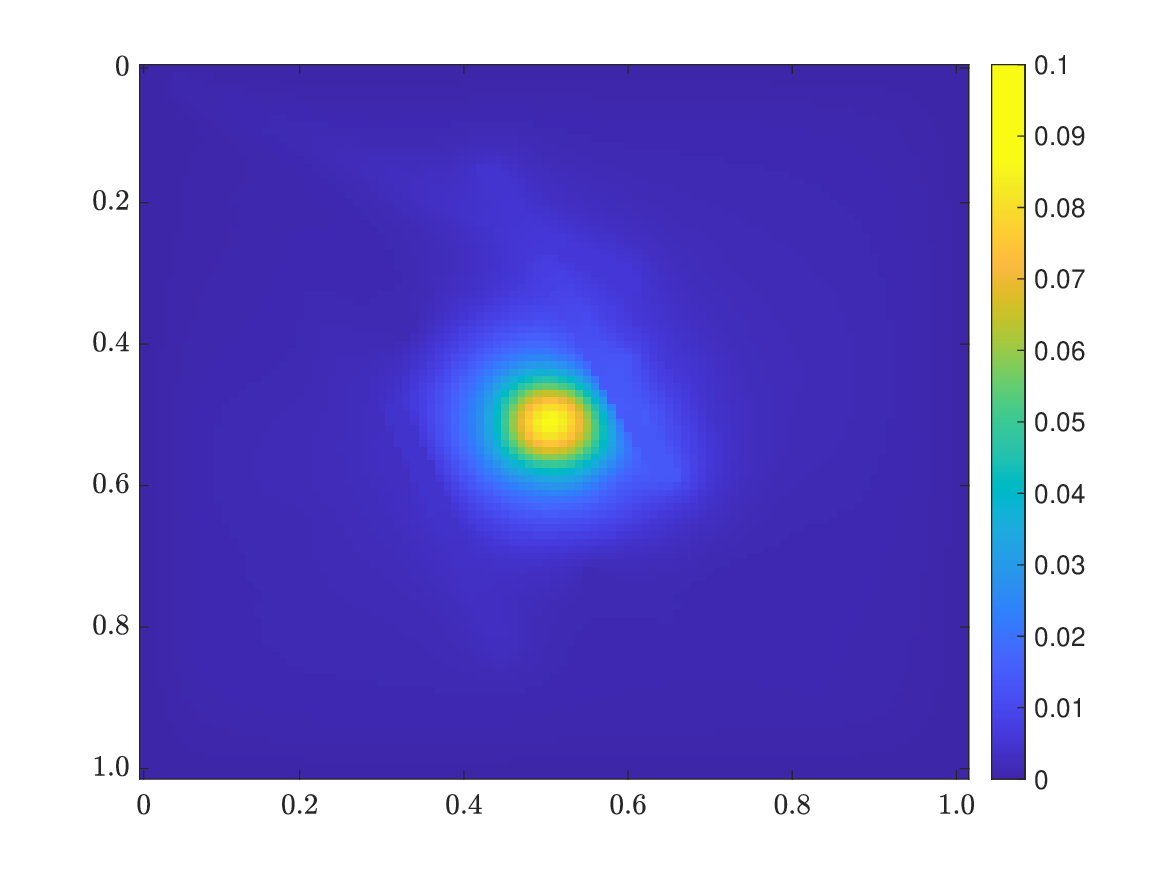}\quad
\includegraphics[width = 1.7in]{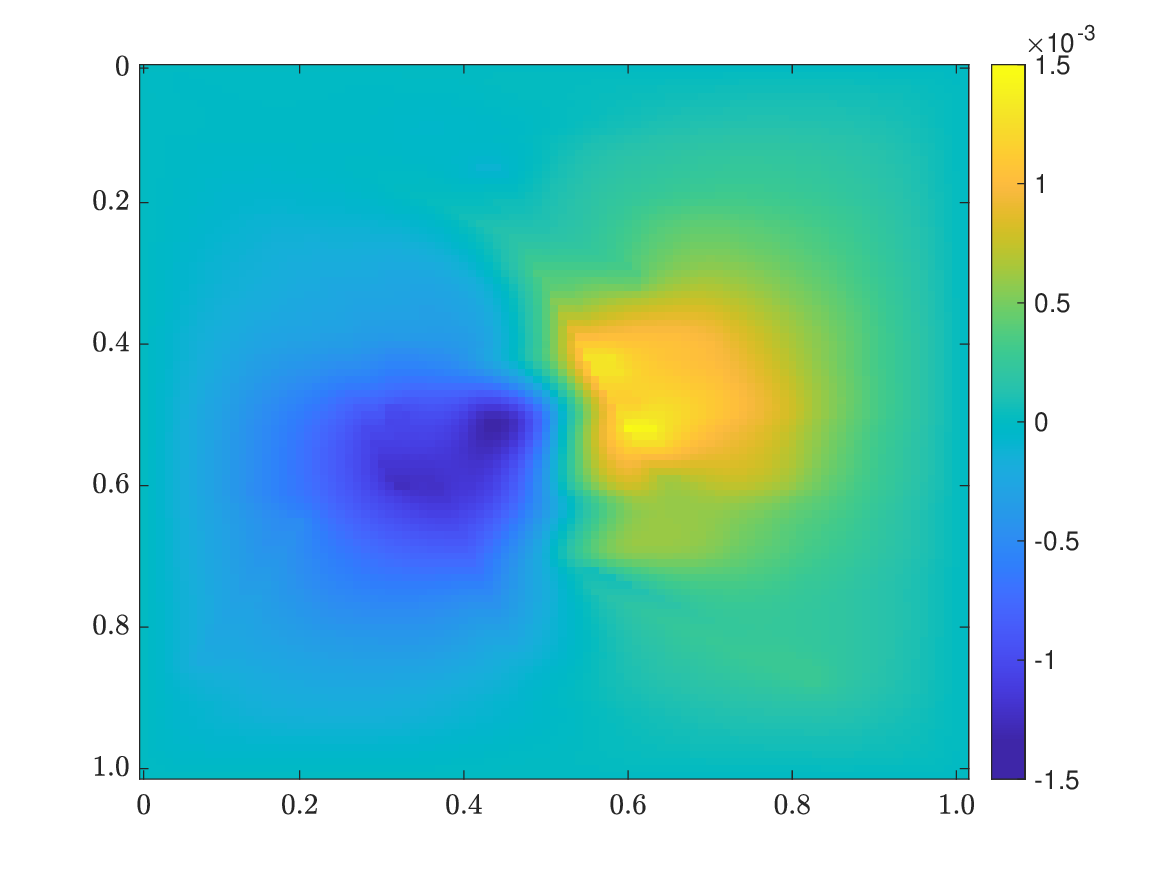}\quad
\includegraphics[width = 1.7in]{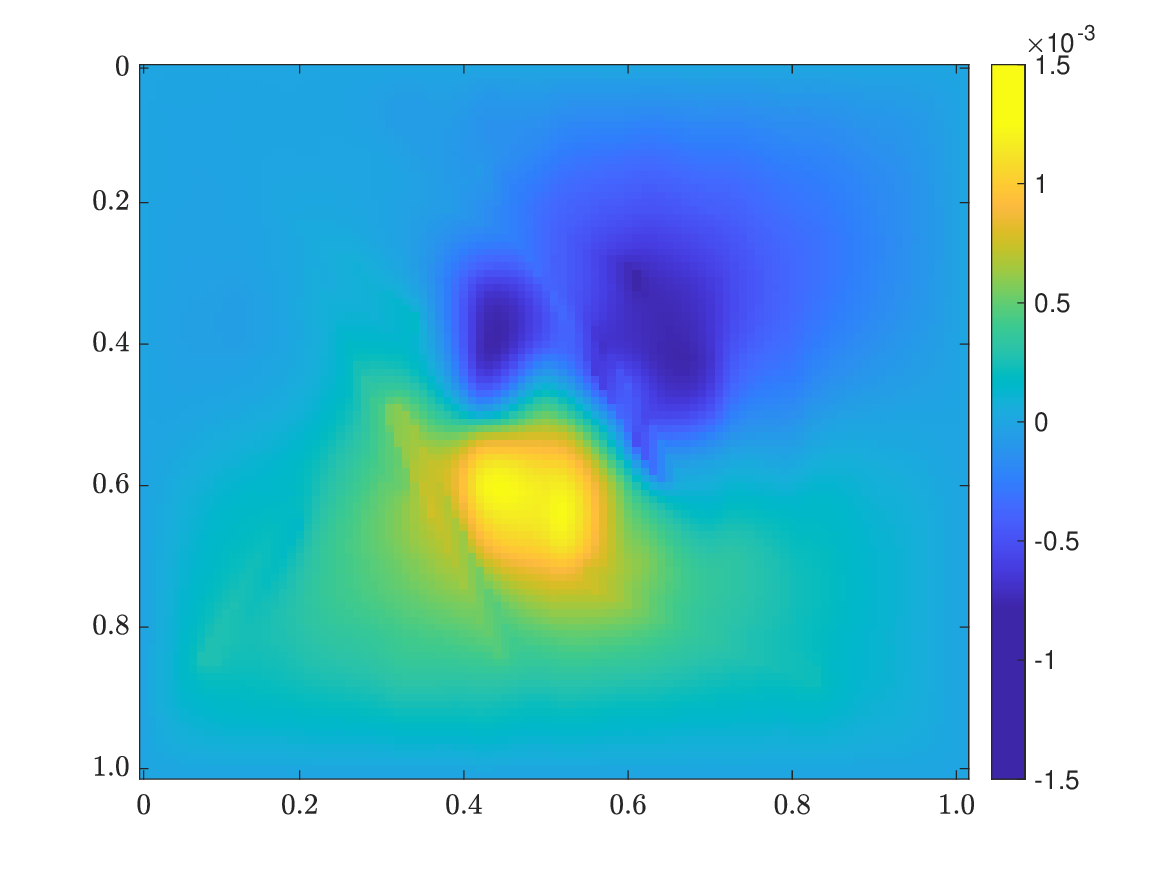}

\includegraphics[width = 1.7in]{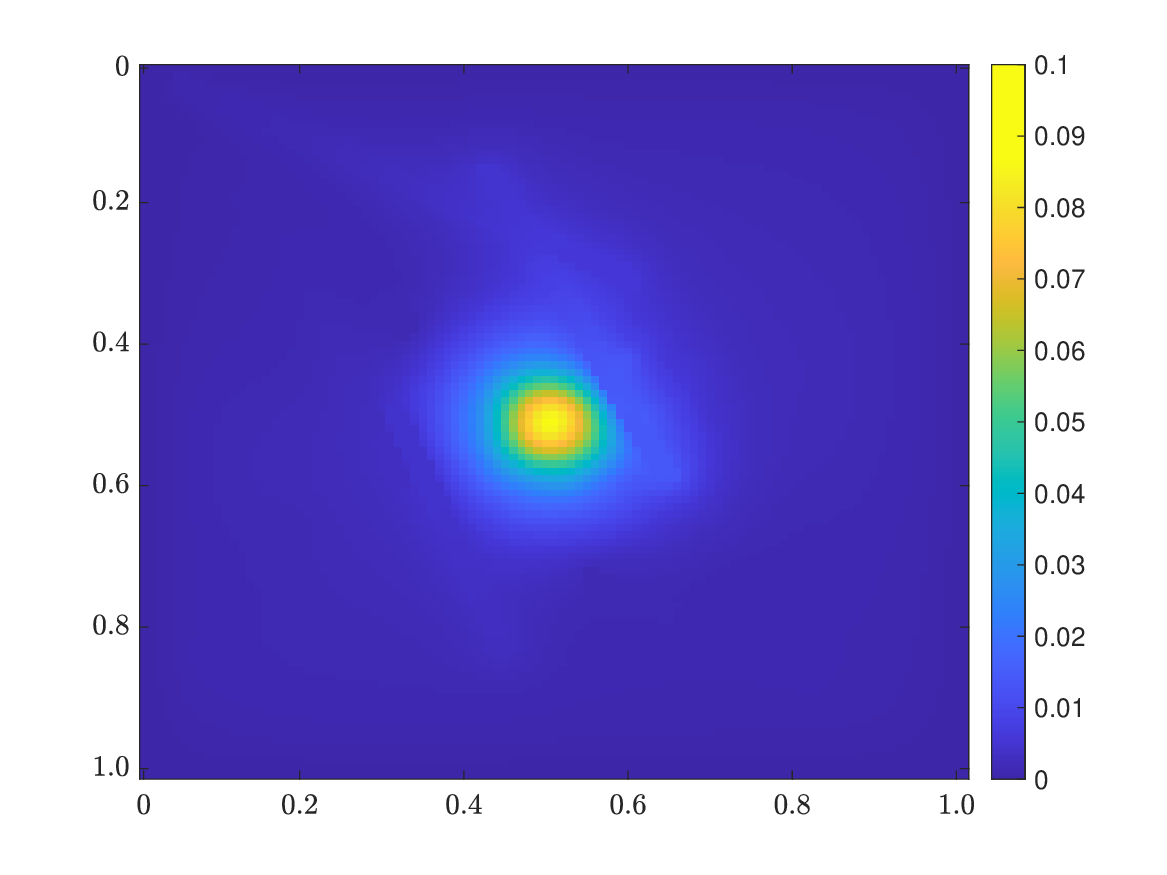}\quad
\includegraphics[width = 1.7in]{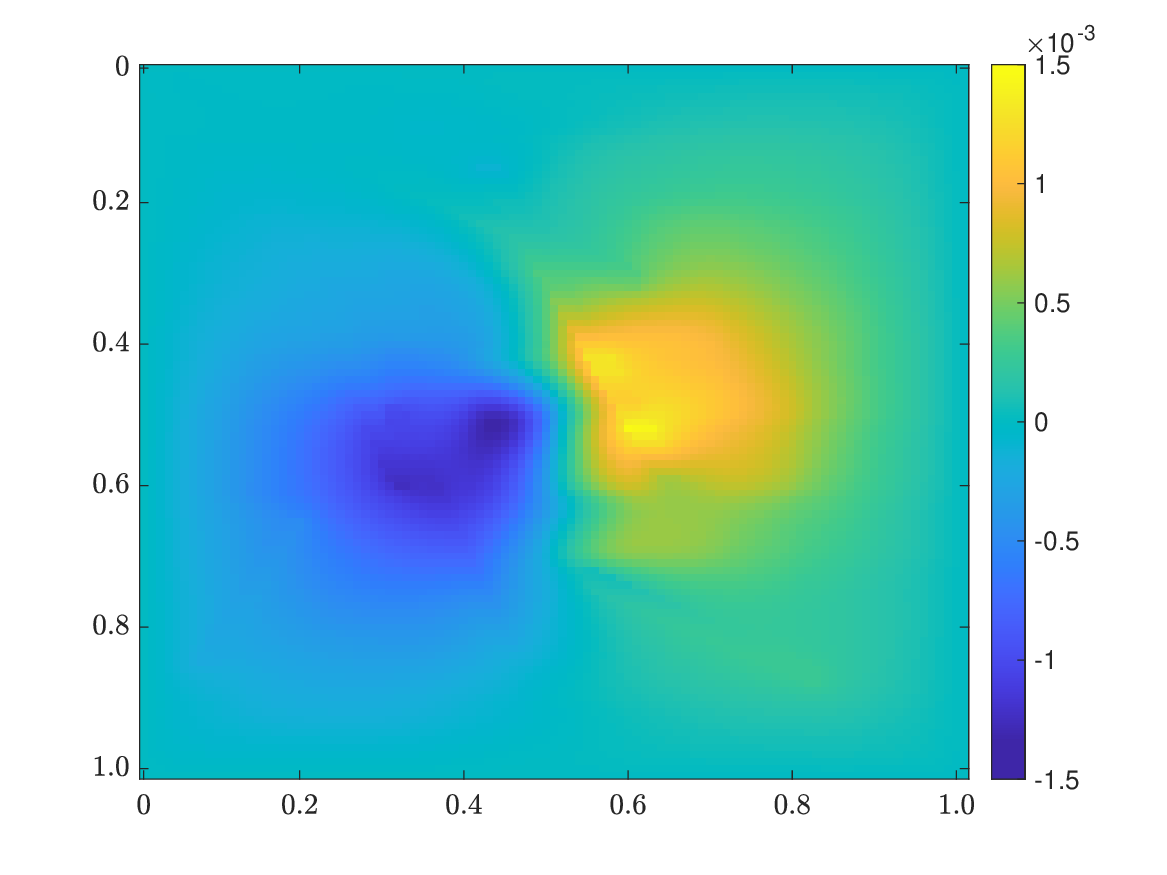}\quad
\includegraphics[width = 1.7in]{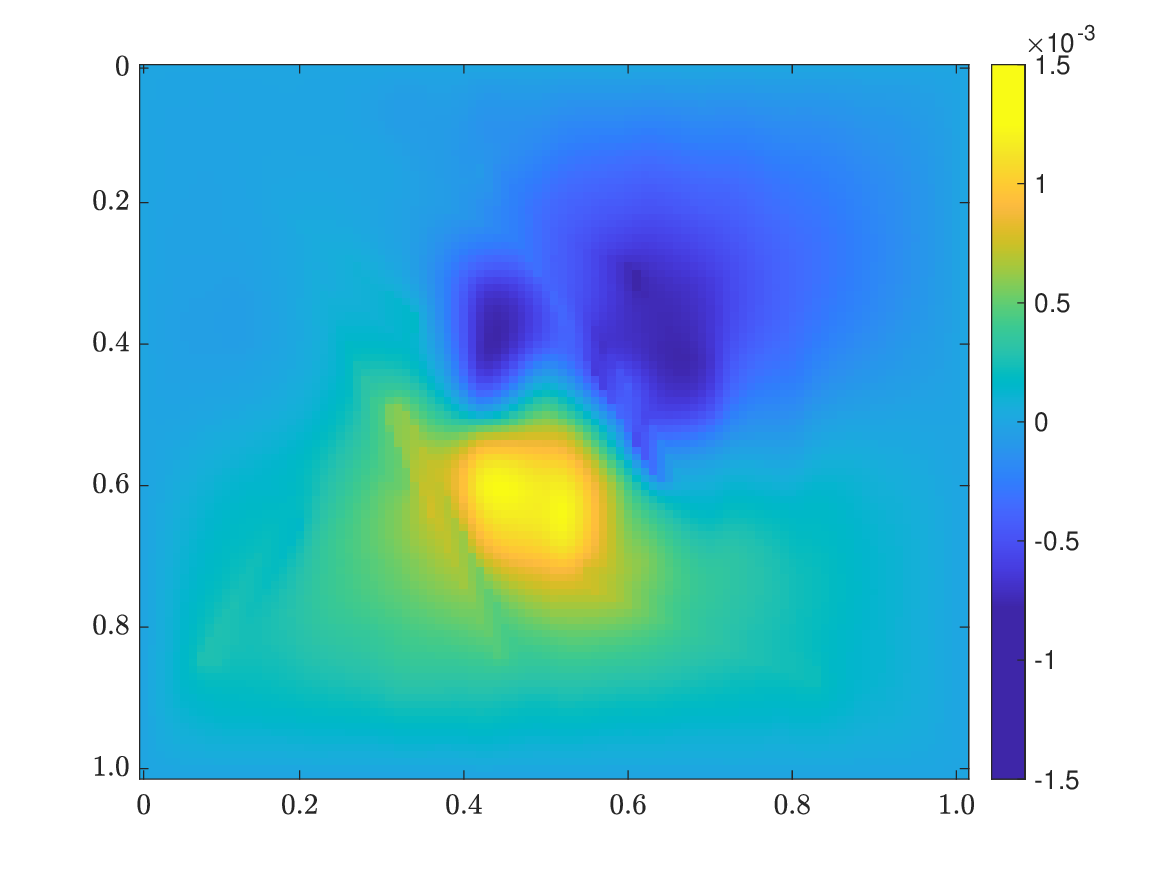}
\caption{Solution profiles for (starting from left to right) $p$, $u_1$, $u_2$ at $T=0.01$ of Example \ref{chap3_exp:2}. Top Row: Reference solutions with fully implicit fine-scale approach. Second Row: Implicit CEM-GMsFEM. Third Row: Implicit CEM-GMsFEM with additional basis $Q_{H,2}$. Bottom Row: Proposed splitting method with additional basis $Q_{H,2}$ .}
\label{fig:5_2_1_soln_profile_partial}
\end{figure}

\begin{figure}[H]
\centering
\includegraphics[width=3.05 in]{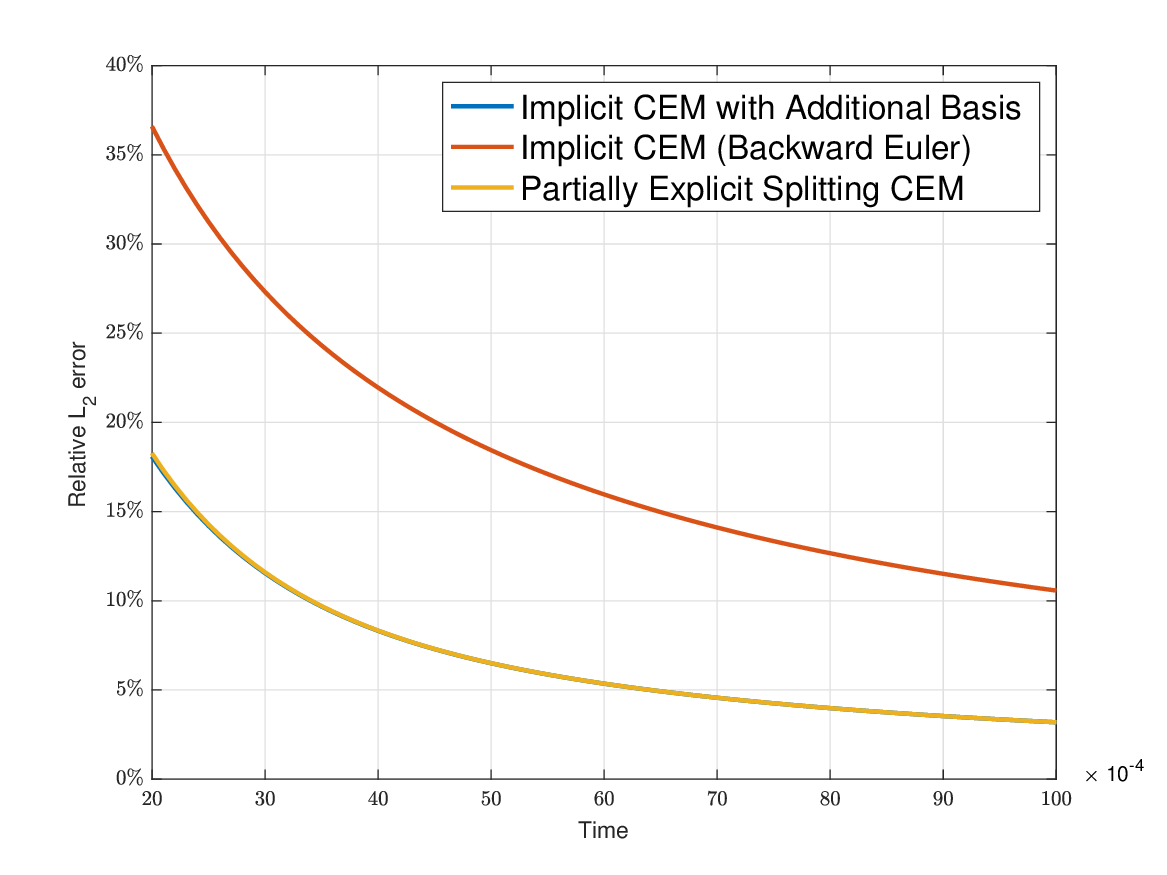} \quad
\includegraphics[width=3.05in]{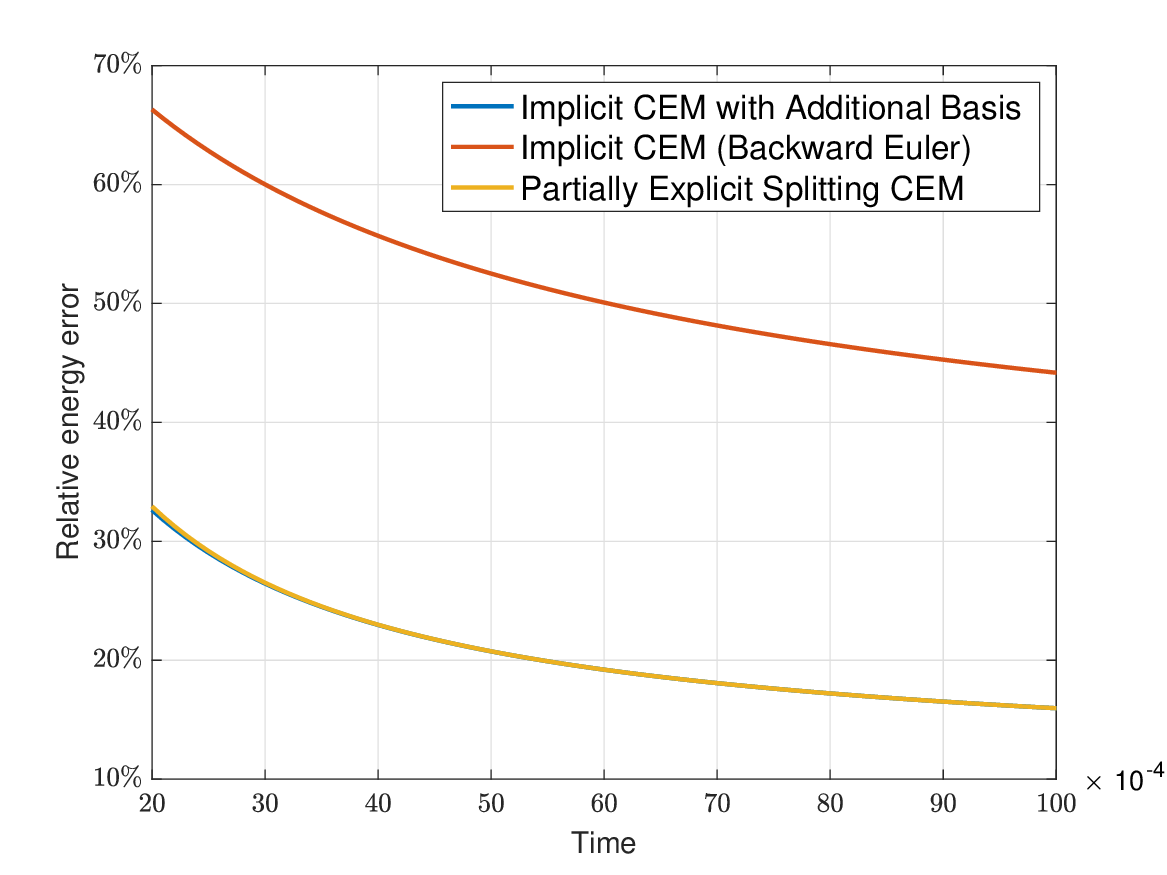}
\caption{Left: $L_2$ error against time  Right: Energy error against time}
\label{fig:error2_partial}
\end{figure}

\begin{table}[H]
\centering
\resizebox{\textwidth}{!}{
\begin{tabular}{c||c|c|c}
Time Step $n$& Implicit CEM-GMsFEM with $Q_{H,2}$  & Implicit CEM-GMsFEM & Partially explicit method \\ 
\hline
   $ 1   $ &  $ 85.57 \% $  & $ 99.81 \% $  & $ 102.23 \% $  \cr
   $ 21   $ &  $ 31.03 \% $  & $ 64.82 \% $  & $ 31.26 \% $  \cr
   $ 41  $ &  $ 22.45 \% $  & $ 54.98 \% $  & $ 22.46 \% $  \cr
   $ 61   $ &  $ 18.95 \% $  & $ 49.66 \% $  & $ 18.95 \% $  \cr
   $ 81   $ &  $ 17.06 \% $  & $ 46.30 \% $  & $ 17.06 \% $  \cr
   $100   $ &  $ 15.91 \% $  & $ 44.07 \% $  & $ 15.91 \% $ 
 \end{tabular}
 }
\caption{Energy error $e_{energy}^n$ of pressure $p$ against time step. }
\label{tabel:DG_error_example3}
\end{table}

  \begin{table}[H]
\centering
\resizebox{\textwidth}{!}{
\begin{tabular}{c||c|c|c}
Time Step $n$& Implicit CEM-GMsFEM with $Q_{H,2}$ & Implicit CEM-GMsFEM & Partially explicit method \\ 
\hline
   $ 1   $ &  $ 74.79 \% $  & $ 99.73 \% $  & $ 100.79 \% $  \cr
   $ 21  $ &  $ 16.34 \% $  & $ 34.23 \% $  & $ 16.47 \% $  \cr
   $ 41   $ &  $ 7.87 \% $  & $ 21.13 \% $  & $ 7.88 \% $  \cr
   $ 61   $ &  $ 5.17 \% $  & $ 15.55 \% $  & $ 5.18 \% $  \cr
   $ 81   $ &  $ 3.88 \% $  & $ 12.41 \% $  & $ 3.88 \% $  \cr
   $ 100   $ &  $ 3.16 \% $  & $ 10.49 \% $  & $ 3.16 \% $ 
\end{tabular}
}
\caption{$L_2$ error  $e_{L_2}^n$ of pressure $p$ against time step. }
\label{tabel:L2_error_example3}
\end{table}

As shown in Figure \ref{fig:error2_partial}, Table \ref{tabel:DG_error_example3} and Table \ref{tabel:L2_error_example3}, with the singular source term in this example, CEM with additional basis functions give notable improvements. Compared to the blue curve (CEM without additional basis functions), the other two curves coincide and are much lower than the blue curve, especially for the energy error. The reason for this is the original multiscale CEM basis functions do not take singular source term into account, while the additional basis function can significantly correct the solution in cases with a singular source term.

\begin{example}\label{chap3_exp:3}
We consider a time-dependent source function in this example. Specifically, we set the source function to be  $ f(t, x_1, x_2)=100\exp(-800((x_1-0.5)^2+(x_2-0.5)^2))\exp(-(100 t-1)^2)$. The value of the source term increases as time propagates. The rest of the settings are the same as Example \ref{chap3_exp:2}. It demonstrates the capability of the partially explicit splitting method for the case with a time-dependent source term. 
\end{example}

\begin{figure}[htbp!]
\centering
\includegraphics[width = 1.7in]{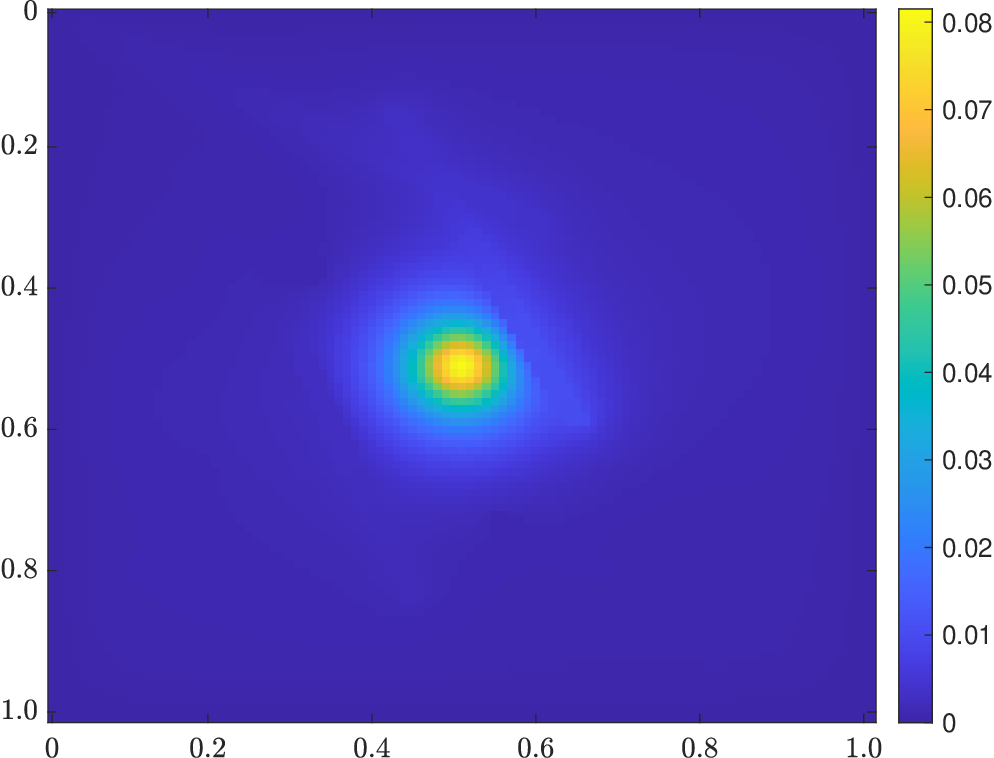} \quad
\includegraphics[width = 1.7in]{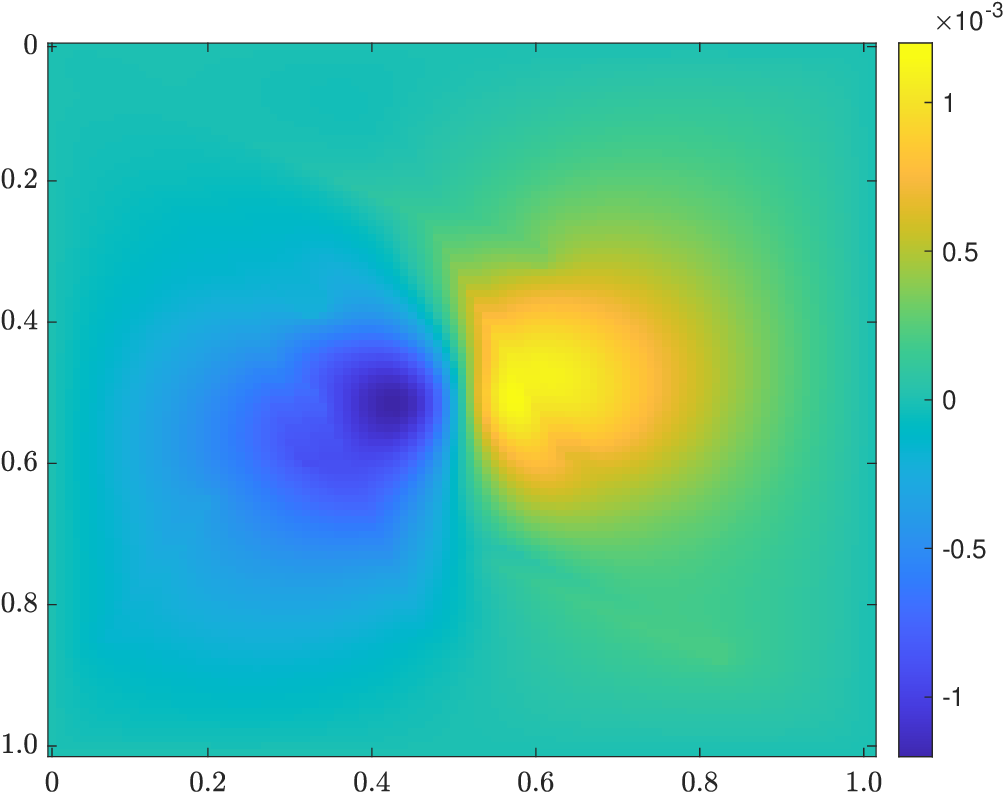}\quad
\includegraphics[width = 1.7in]{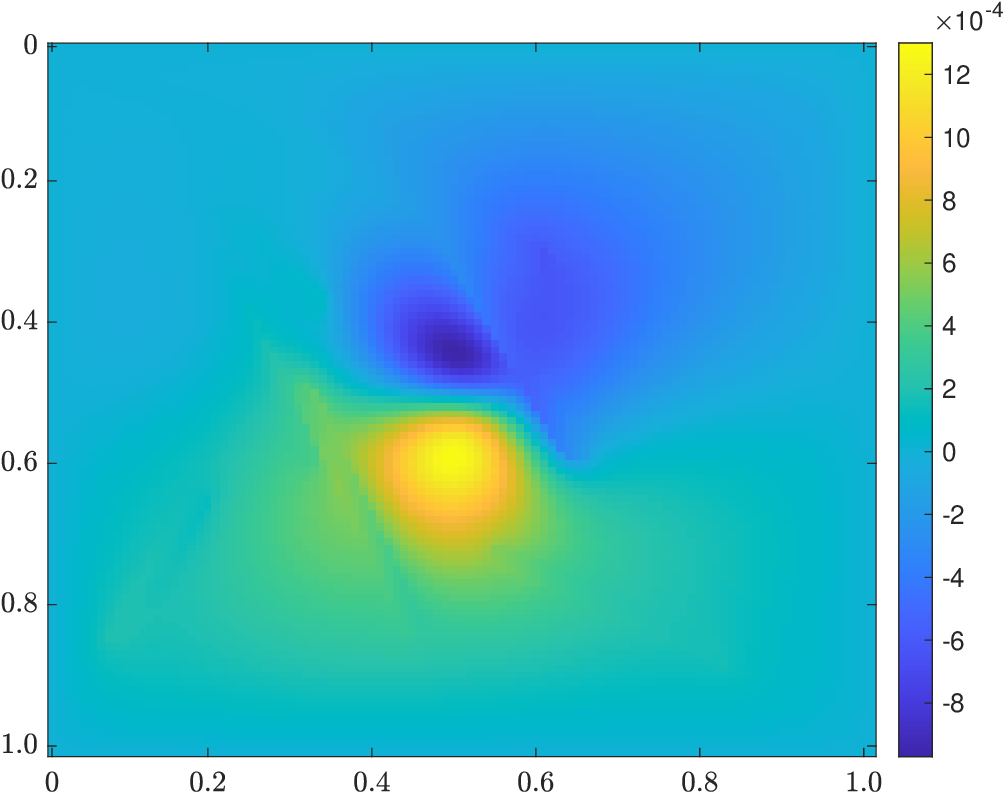}
\includegraphics[width = 1.7in]{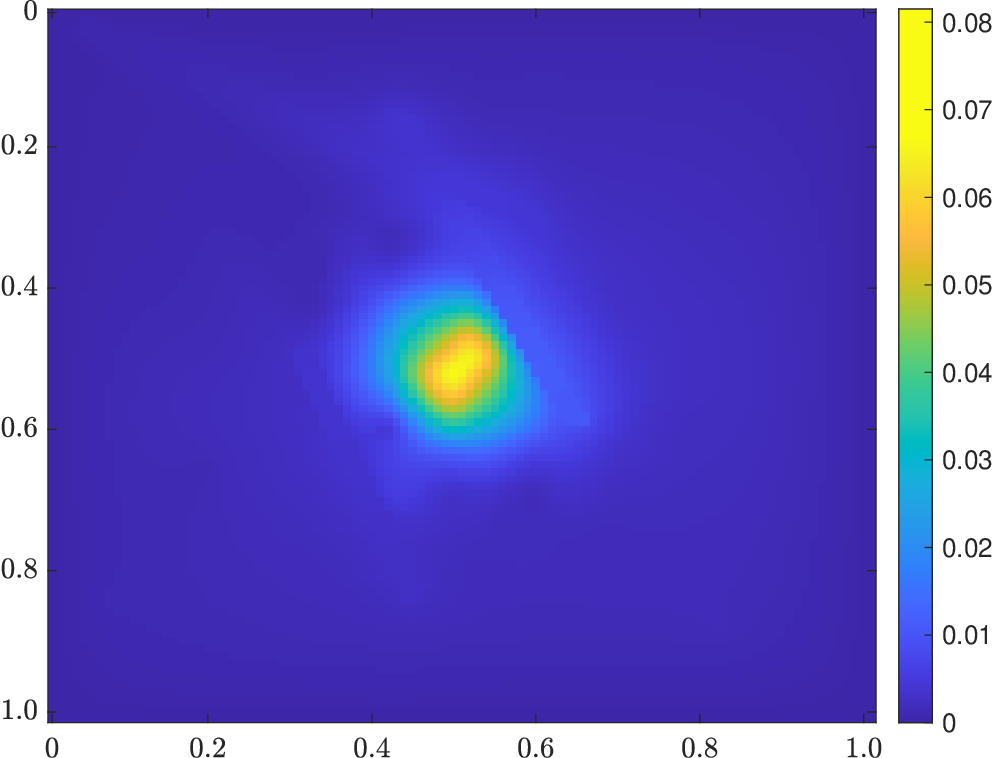}\quad
\includegraphics[width = 1.7in]{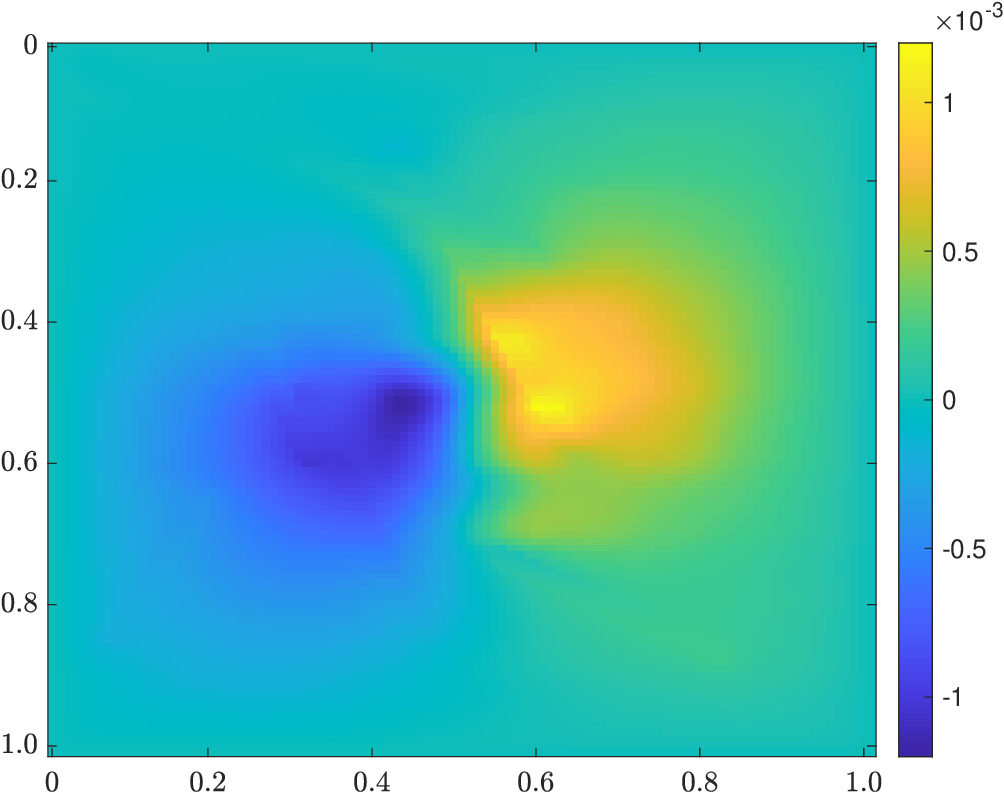}\quad
\includegraphics[width = 1.7in]{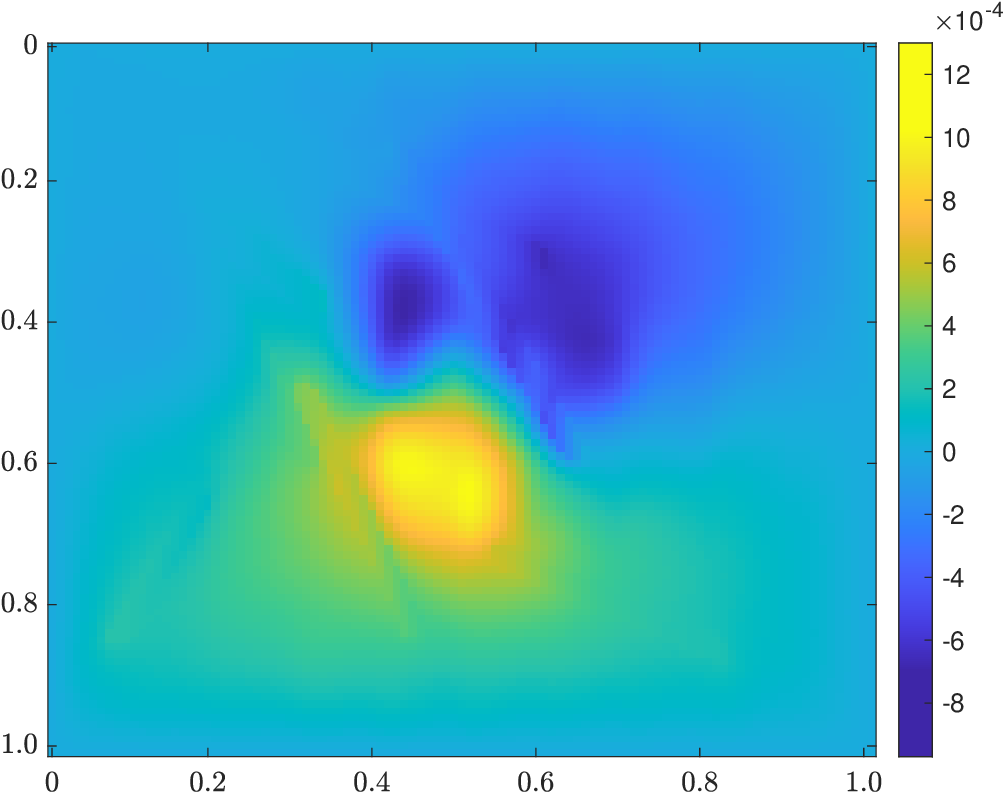}

\includegraphics[width = 1.7in]{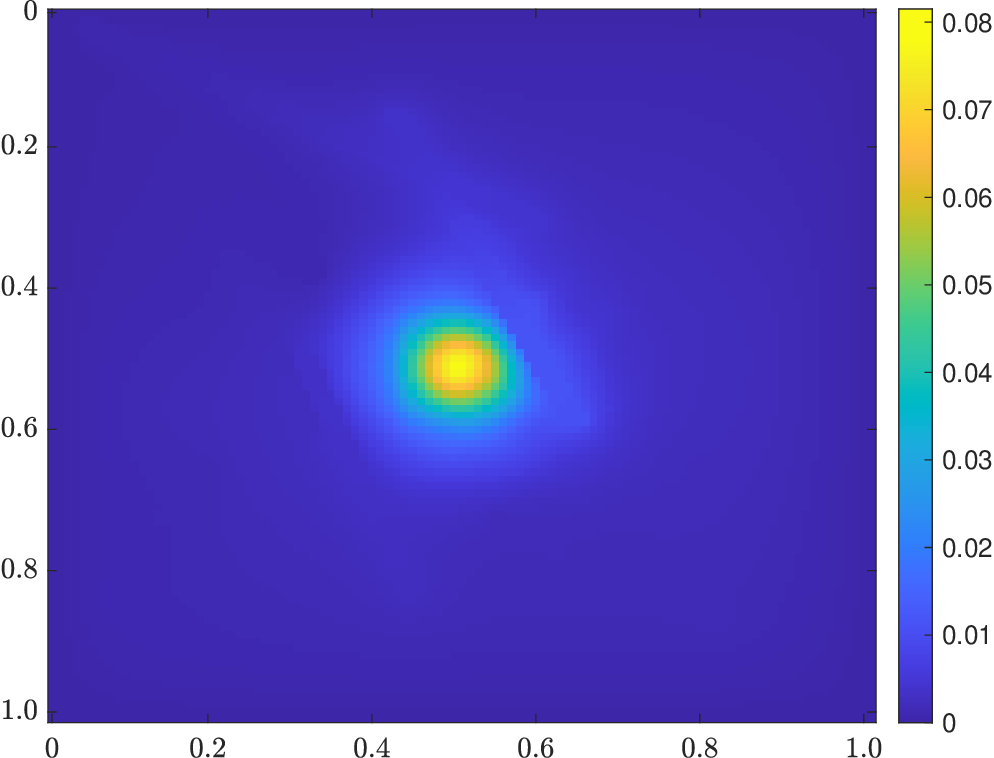}\quad
\includegraphics[width = 1.7in]{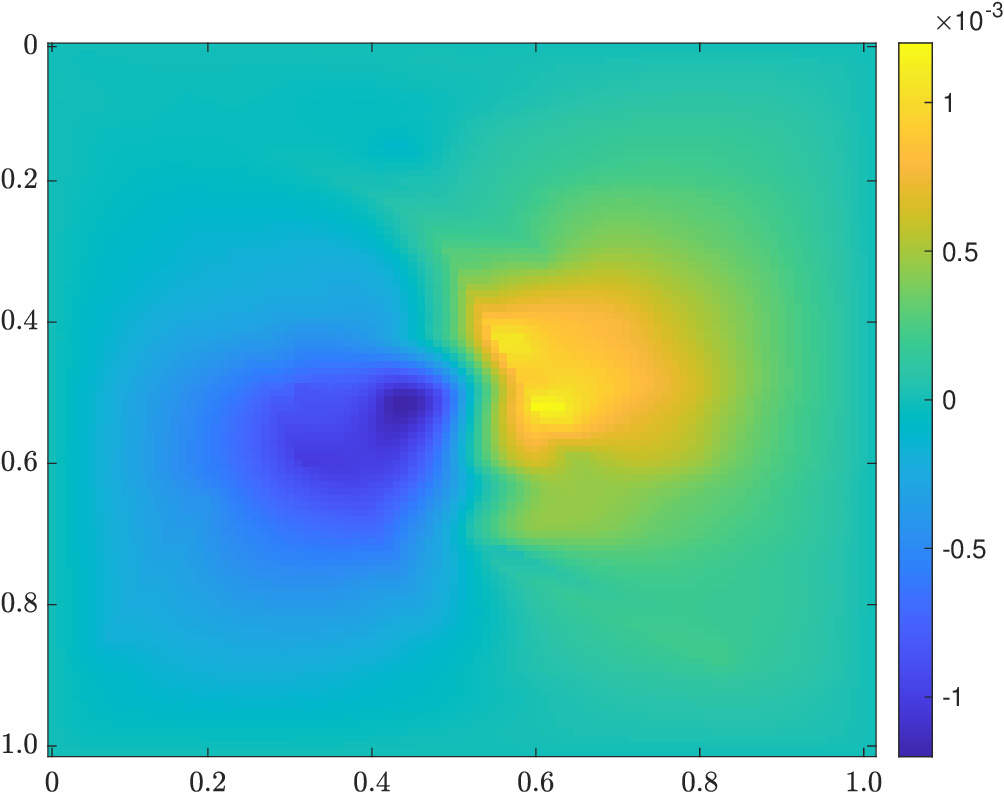}\quad
\includegraphics[width = 1.7in]{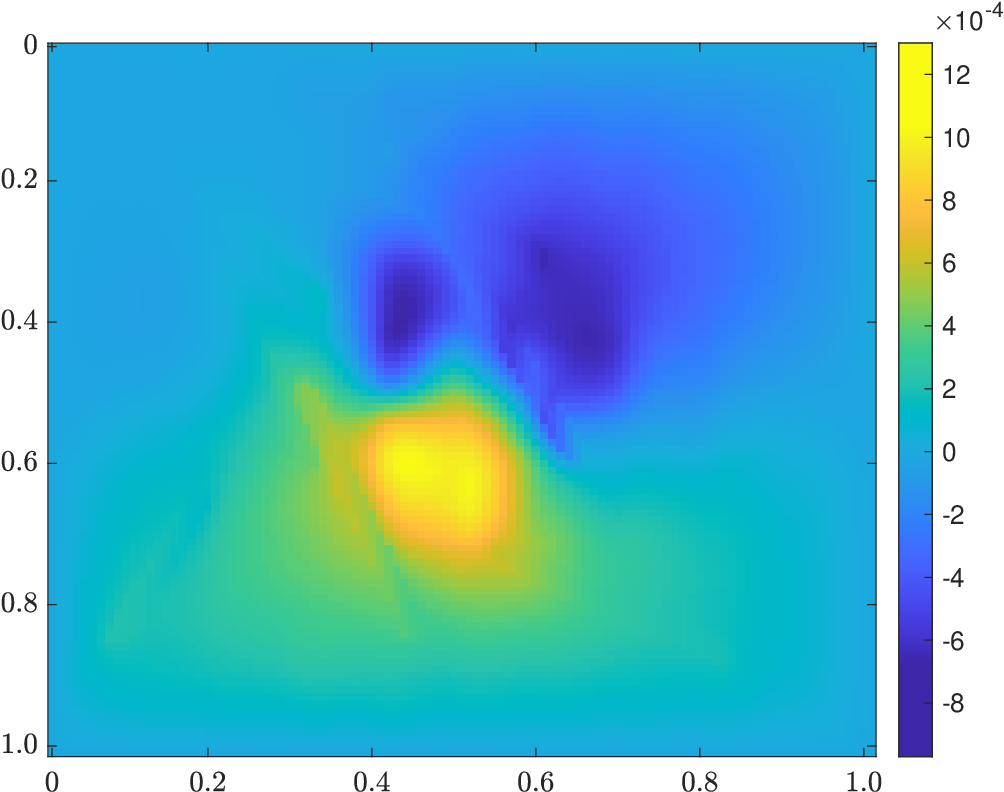}

\includegraphics[width = 1.7in]{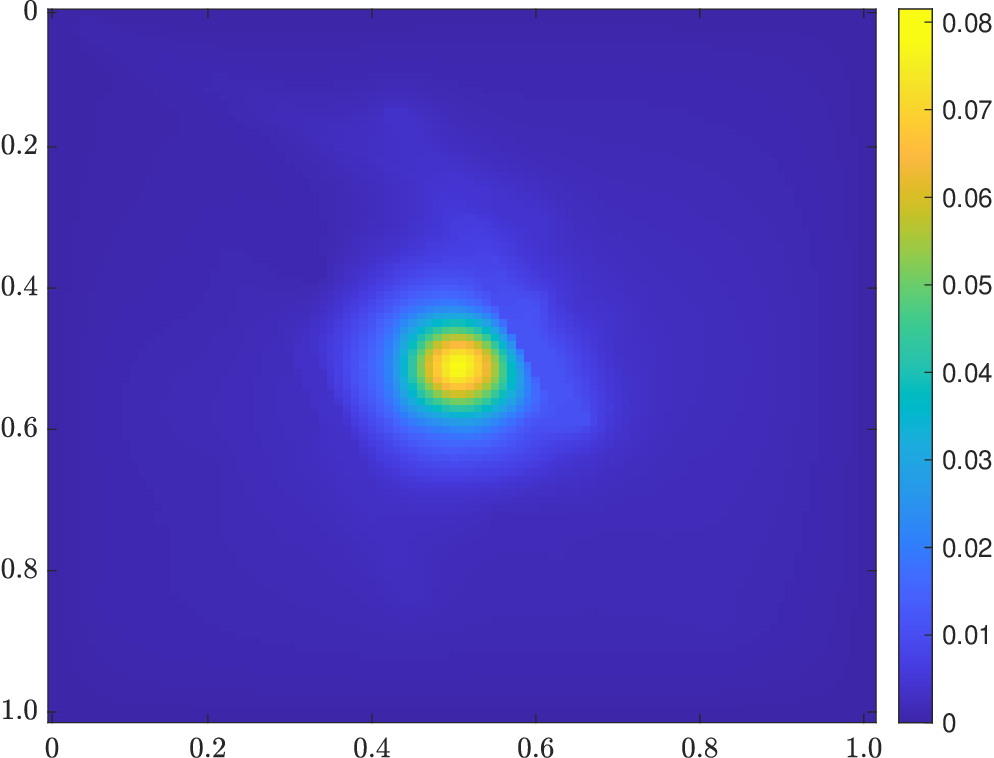}\quad
\includegraphics[width = 1.7in]{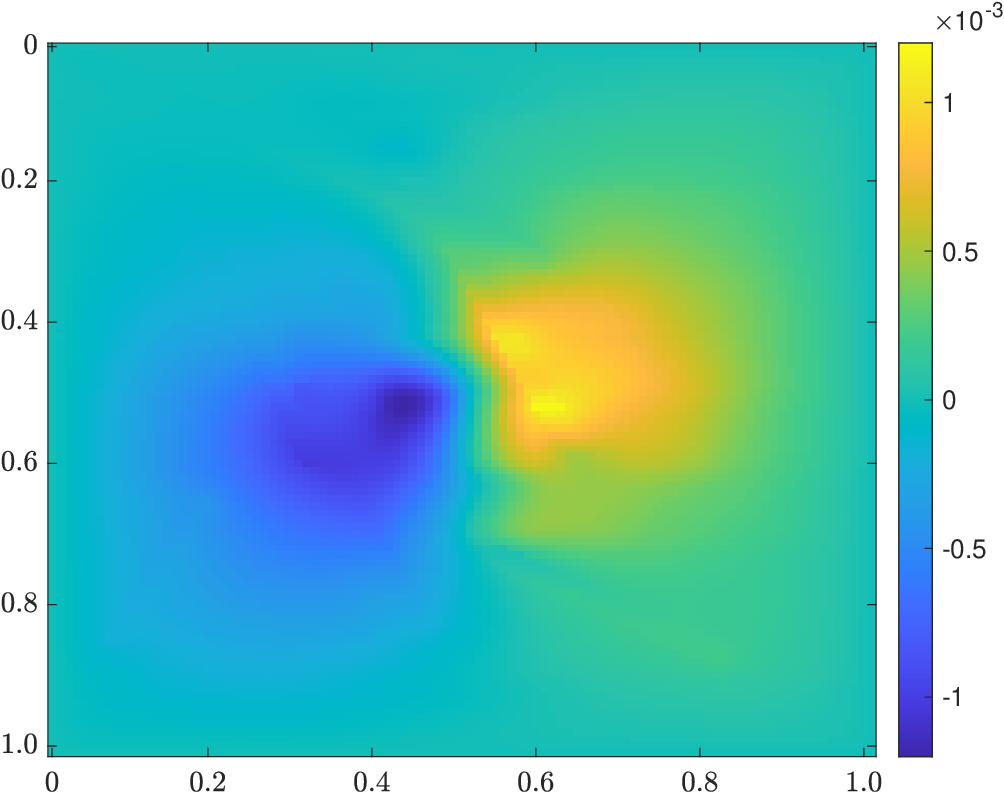}\quad
\includegraphics[width = 1.7in]{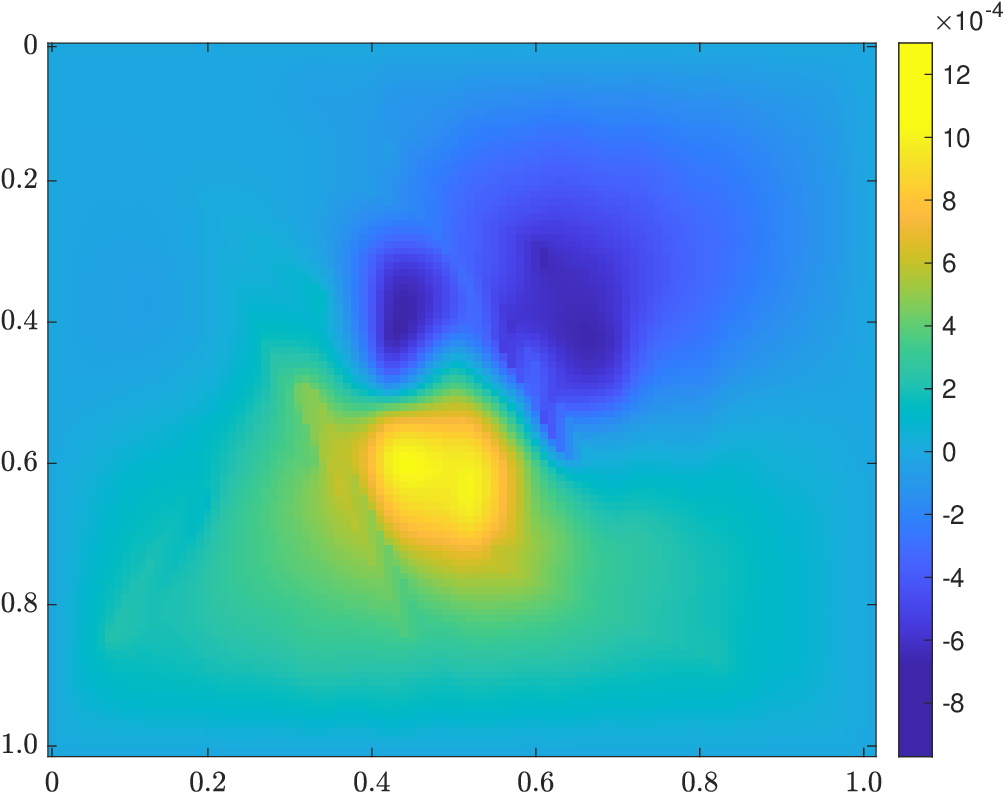}
\caption{Solution profiles for (starting from left to right) $p$, $u_1$, $u_2$ at $T=0.01$ of Example \ref{chap3_exp:3}. Top Row: Reference solutions with fully implicit fine-scale approach. Second Row: Implicit CEM-GMsFEM. Third Row: Implicit CEM-GMsFEM with additional basis $Q_{H,2}$. Bottom Row: Proposed splitting method with additional basis $Q_{H,2}$ .}
\label{fig:5_2_soln_profile_partial}
\end{figure}

\begin{figure}[H]
\centering
\includegraphics[width=3.05 in]{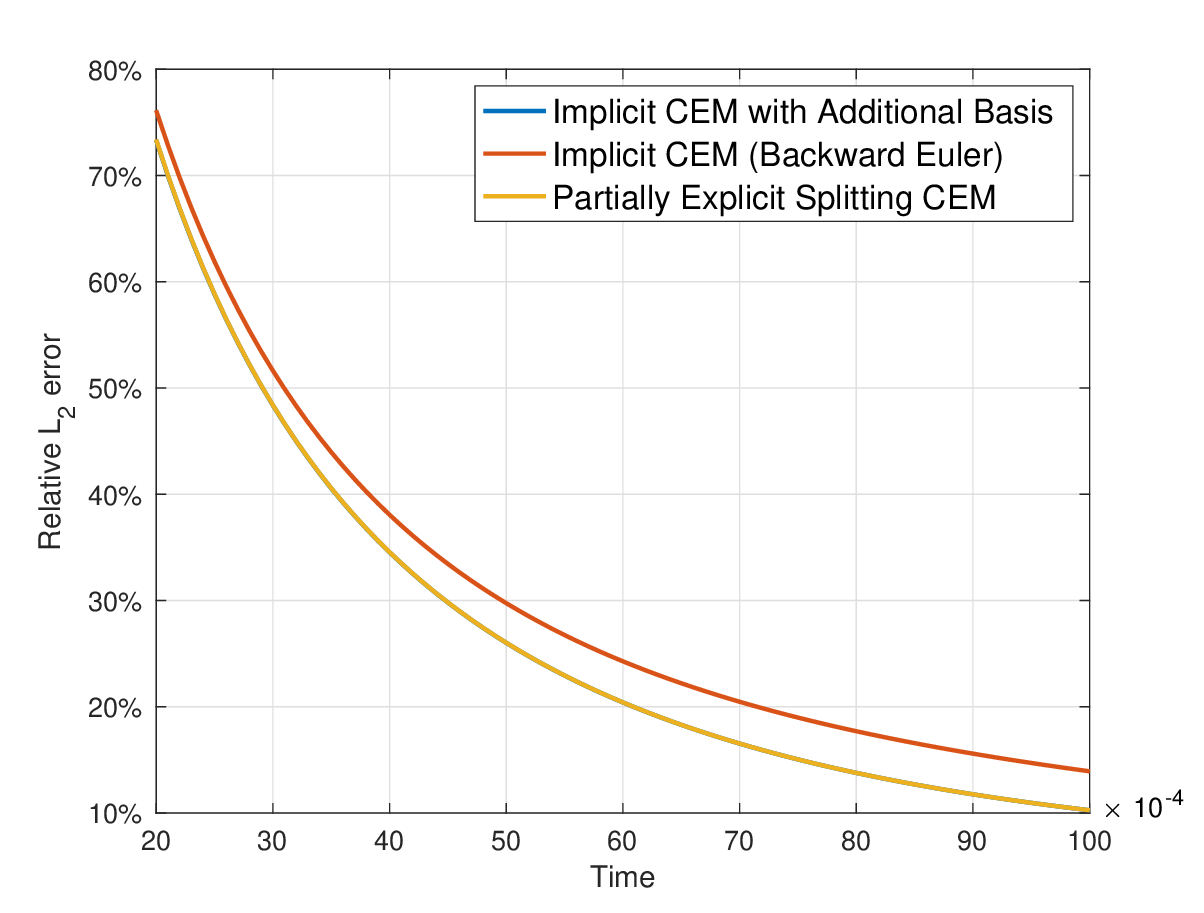} \quad
\includegraphics[width=3.05in]{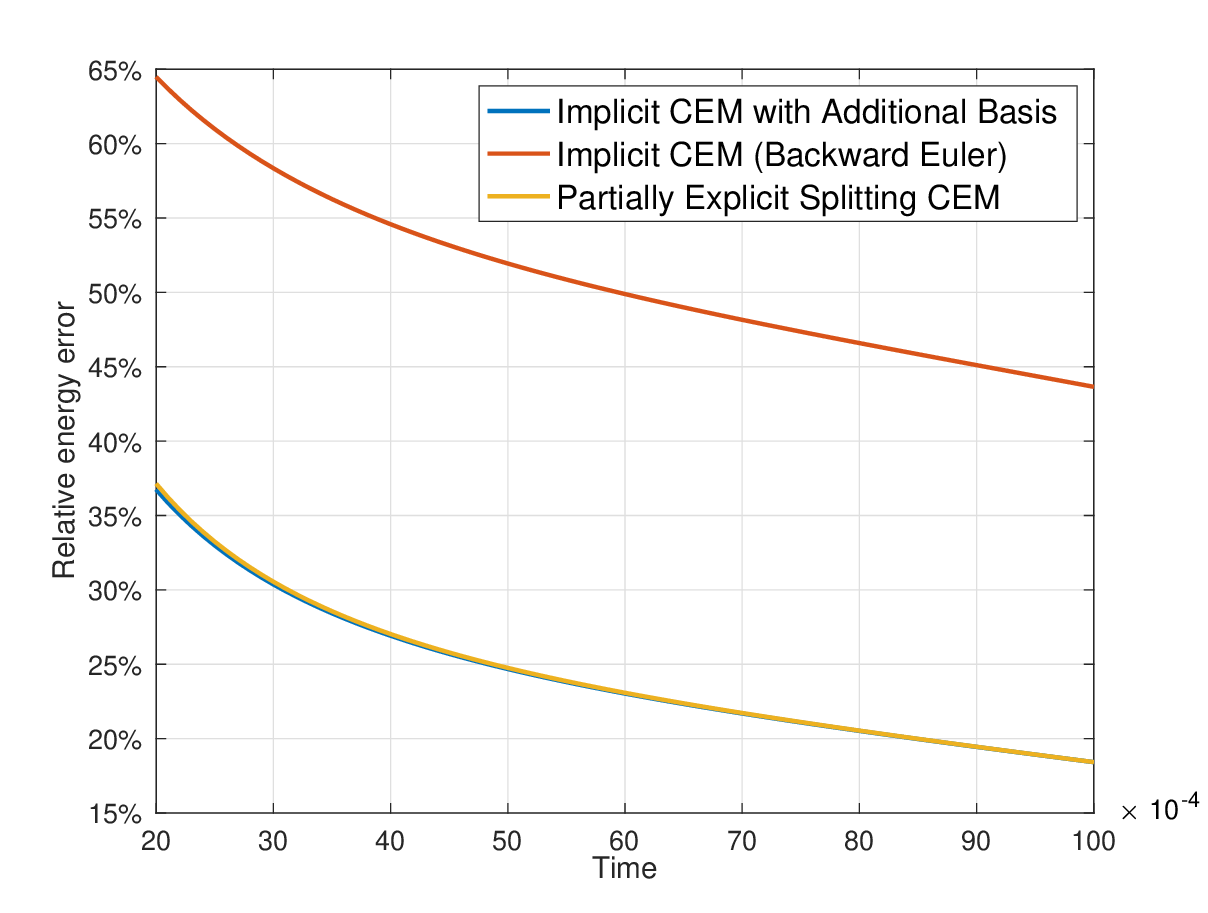}
\caption{Left: $L_2$ error against time.  Right: Energy error against time.}
\label{fig:error4_partial}
\end{figure}

In Figure \ref{fig:error4_partial}, we observe that the implicit CEM with $Q_{H,2}$ and the partially explicit method outperform the implicit CEM without $Q_{H,2}$. For the time-dependent source term in this example, we also see a bigger improvement in terms of the energy norm than $L_2$ norm. This is a similar result as Example \ref{chap3_exp:2} with time-independent source term.
\begin{table}[H]
\centering
\resizebox{\textwidth}{!}{
\begin{tabular}{c||c|c|c}
Time Step $n$& Implicit CEM-GMsFEM with $Q_{H,2}$  & Implicit CEM-GMsFEM & Partially explicit method \\ 
\hline
  $ 1   $ &  $ 367.27 \% $  & $ 379.45 \% $  & $ 382.92 \% $  \cr
   $ 21  $ &  $ 35.05 \% $  & $ 62.95 \% $  & $ 35.38 \% $  \cr
   $ 41   $ &  $ 26.40 \% $  & $ 53.98 \% $  & $ 26.50 \% $  \cr
   $ 61   $ &  $ 22.74\% $  & $ 49.52 \% $  & $ 22.79 \% $  \cr
   $ 81   $ &  $ 20.30\% $  & $ 46.29 \% $  & $ 20.32 \% $  \cr
   $ 100   $ &  $ 18.32 \% $  & $ 43.50 \% $  & $ 18.32 \% $ 
 \end{tabular}
 }
\caption{Energy error $e_{energy}^n$ of pressure $p$ against time step.}
\label{tabel:DG_error_example4}
\end{table}

  \begin{table}[H]
\centering
\resizebox{\textwidth}{!}{
\begin{tabular}{c||c|c|c}
Time Step $n$& Implicit CEM-GMsFEM with $Q_{H,2}$ & Implicit CEM-GMsFEM & Partially explicit method \\ 
\hline
 $ 1   $ &  $ 291.60 \% $  & $ 291.31 \% $  & $ 292.43 \% $  \cr
   $ 21   $ &  $ 66.89 \% $  & $ 69.81 \% $  & $ 66.93 \% $  \cr
   $ 41   $ &  $ 32.51 \% $  & $ 36.09 \% $  & $ 32.53 \% $  \cr
   $ 61   $ &  $ 19.52 \% $  & $ 23.41 \% $  & $ 19.52 \% $  \cr
   $ 81   $ &  $ 13.32\% $  & $ 17.23 \% $  & $ 13.32 \% $  \cr
   $ 100   $ &  $ 10.13 \% $  & $ 13.78 \% $  & $ 10.13 \% $
\end{tabular}
}
\caption{$L_2$ error  $e_{L_2}^n$ of pressure $p$ against time step.}
\label{tabel:L2_error_example4}
\end{table}

\section{Conclusions}\label{sec:conclusion}
We investigated a poroelasticity problem that involves multiscale Young's modulus and permeability with high-contrast features. To ensure stability with a time step that is independent of contrast, we propose a partially explicit method that requires special multiscale basis construction and temporal splitting. Our coarse space comprises CEM-GMsFEM basis functions, and we construct special multiscale basis functions for the remaining degrees of freedom. The implicit solution for the coarse space, which has only a few degrees of freedom, is supplemented by a remaining part in the specially designed multiscale space. The remaining part is updated explicitly using the proposed splitting algorithms. We demonstrate that the proposed approach is stable with a time step that is independent of contrast, and the success of the approach is contingent on an appropriate multiscale decomposition of the space, as demonstrated. We present numerical results, which indicate that the proposed partial explicit method is almost as accurate as the fully implicit method.

The partially explicit method for linear poroelasticity problems allows the explicit update for the correction part, which is more computationally efficient compared to the online adaptive enrichment method and avoids the need for iteration. Additionally, the CFL-type condition of the partially explicit method is contrast-independent, which allows larger admissible time step sizes compared to purely explicit methods. However, in practice, we found that the time step size is still relatively small. Referring to the numerical experiments in \cite{chung2021contrast}, we found that the constant in the CFL-type condition is relatively large in the additional basis space we constructed. This explains the reason why the time step size is relatively small. One possible solution is to construct a better space that results in a smaller constant in the CFL-type condition, or to develop a different scheme that relaxes the CFL-type condition.

For further study, based on the results of numerical experiments using the CEM-GMsFEM method to solve linear poroelasticity problems, the displacement error was found to be significant. It may be advantageous to incorporate a partially explicit scheme to the displacement component in order to improve the simulation's performance in this area for future research.

\section{Acknowledgement}
The work was partly done when Xin Su was affiliated with Department of Mathematics in Texas A\&M University. Xin Su would also like to thank the partial support from National Science Foundation (DMS-2208498) when she was affiliated with Department of Mathematics in Texas A\&M University.
\bibliographystyle{unsrt}
\bibliography{ThesisReference.bib}
\end{document}